\documentclass{amsart}
\usepackage{tikz-cd}

\usepackage{amssymb}

\theoremstyle{plain}
\newtheorem{theorem}[equation]{Theorem}
\newtheorem{lemma}[equation]{Lemma}
\newtheorem{proposition}[equation]{Proposition}
\theoremstyle{definition}
\newtheorem{definition}[equation]{Definition}
\newtheorem{example}[equation]{Example}

\newtheorem{question}[equation]{Question}
\newtheorem{conjecture}[equation]{Conjecture}

\newtheorem*{theorem*}{Theorem}


\DeclareMathOperator{\Hom}{Hom}
\DeclareMathOperator{\End}{End}
\DeclareMathOperator{\Ext}{Ext}
\DeclareMathOperator{\rad}{rad}
\DeclareMathOperator{\op}{op}
\DeclareMathOperator{\id}{id}
\DeclareMathOperator{\image}{Im}
\DeclareMathOperator{\modu}{mod}
\DeclareMathOperator{\height}{ht}
\DeclareMathOperator{\add}{add}
\DeclareMathOperator{\twmod}{twmod}
\DeclareMathOperator{\ess}{ess}

\begin{document}

\title{Towards bound quivers for exact categories}

\author{Julian K\"ulshammer}


\address{Julian K\"ulshammer\\
Department of Mathematics
University of Uppsala \\ Box 480 \\ Uppsala,
Sweden} \email{julian.kuelshammer@math.uu.se}

\thanks{The author would like to thank Ren\'e Marczinzik for asking a question that led to the inclusion of Lemma \ref{quadratic-a-infinity}. He would also like to thank Volodymyr Mazorchuk for the permission of including Lemma \ref{walter's-lemma} and for answering his questions about Example \ref{category-o-example}. Furthermore he would like to thank the anonymous referee for comments regarding the literature.}

\keywords{Exact category, resolving subcategory, quasi-hereditary algebra, A-infinity algebra, Gabriel quiver, BGG category}


\begin{abstract}
In this survey article we propose the notion of a bound quiver for an exact category generalising the classical concept of the Gabriel quiver and its relation for a module category as certain ring extension. The notion is motivated by joint work of the author with Vanessa Miemietz, Steffen Koenig and Sergiy Ovsienko in the case where the exact category is the category of modules with a standard filtration for a quasi-hereditary algebra. We conclude with a variety of examples in this case. 
\end{abstract}

\maketitle


\section{Introduction}

In the representation theory of finite-dimensional algebras, the notion of a bound quiver, consisting of the Gabriel quiver of the algebra and a set of relations generating an admissible ideal have been an essential tool, not least because of the easy access to constructing families of combinatorial examples with nice homological properties as well as counterexamples with particularly complicated homological behaviour. 

For every finite-dimensional algebra, there is a Morita equivalent algebra with each indecomposable projective (up to isomorphism) appearing exactly once in the regular module, its basic representative. The basic representative is unique in the sense that two finite-dimensional algebras are  Morita equivalent if and only if their basic representatives are isomorphic. Over an algebraically closed field $\Bbbk$, Gabriel's structure theorem asserts that its basic representative is isomorphic to the quotient of a path algebra of a finite quiver $Q$ by an admissible ideal $I$. Stasheff's notion of an $A_\infty$-algebra provides a homotopical perspective on this theorem. Roughly speaking, an $A_\infty$-algebra is a differential graded algebra which is not associative on the nose but only up to a system of higher homotopies. A particular example is the $\Ext$-algebra of any module $M$. If $M=L$ is the direct sum of the simple $A$-modules, then it is well-known that the space $\Ext^1_A(L,L)$ is dual to the space spanned by the arrows of the Gabriel quiver of $A$ while the space $\Ext^2_A(L,L)$ is dual to a space spanned by minimal generators of the admissible ideal of relations. In this case, the $A_\infty$-structure restricts to maps 
\[m_n\colon (\Ext^1_A(L,L))^{\otimes n}\to \Ext^2_A(L,L),\] 
which are dual to a section of the projection $\Bbbk Q\to \Bbbk Q/I$, an instance of what is nowadays often called $A_\infty$-Koszul duality. A complicated way to deduce uniqueness of the basic representative of a finite-dimensional algebra would be to deduce it from Kadeishvili's theorem on the uniqueness of the $A_\infty$-structure on $\Ext^*_A(L,L)$ up to $A_\infty$-isomorphism. In addition to Morita theory and Gabriel's structure theorem, another foundational theorem in representation theory of finite-dimensional algebra is the Wedderburn--Malcev theorem stating that over a perfect field, the algebra $A/\rad(A)$ embedds as a maximal semisimple subalgebra into $A$. Moreover, this embedding is unique up to conjugation with an invertible element of $A$. 

Extension-closed subcategories of module categories of finite-dimensional algebras have been studied for a long time. An example of such an extension-closed subcategory is given by the category $\mathcal{F}(\Delta)$ of modules admitting a standard filtration for the class of quasi-hereditary algebras. These algebras and the related highest weight categories were introduced by Cline, Parshall, and Scott to abstract certain properties arising in the representation theory of Lie algebras. Important examples are blocks of BGG category $\mathcal{O}$, Schur algebras, but also hereditary algebras and Auslander algebras of representation-finite algebras. Motivated in particular by the example of BGG category $\mathcal{O}$ in \cite{Koe95}, Koenig introduced the notion of an exact Borel subalgebra of a quasi-hereditary algebra. In recent years, it was realised that a variant of his notion actually makes sense more generally for exact categories of the form $\mathcal{F}(M)$ of modules admitting a filtration by a particular class of modules. For such an exact category, we suggest to call its bound quiver an algebra extension $A\subseteq R$ such that $A$ is basic, $R\otimes_A A/\rad(A)$ is isomorphic to the direct sum of modules under consideration, and the algebra extension is normal (a technical condition that will be explained in the main body of the text), and regular exact in the sense that $R$ is projective as a right $A$-module and the induced maps
\[\Ext^m_A(A/\rad(A),A/\rad(A))\to \Ext^m_R(R\otimes_A A/\rad(A), R\otimes_A A/\rad(A))\]
are isomorphisms for all $m\geq 1$. Before describing our results in the quasi-hereditary case, let's discuss two extremal cases. If $M$ is the collection of simple modules, then $\mathcal{F}(M)$ is the whole module category, and the algebra extension $A\subseteq A$ is a bound quiver for any basic algebra $A$. If, on the other hand, $M$ is the collection of indecomposable projective modules, then $\mathcal{F}(M)$ is the category of finite-dimensional projective modules and if the ground field is perfect then, any choice of maximal semisimple subalgebra $L$ of $A$ gives a bound quiver $L\subseteq A$. 

The main theorems of our joint works with Vanessa Miemietz, Steffen Koenig and Sergiy Ovsienko so far provide an analogue of the Wedderburn--Malcev theorem for standard modules instead of projective modules:

\begin{theorem*}
Let $\Lambda$ be a quasi-hereditary algebra over an algebraically closed field $\Bbbk$. Then there exists a bound quiver $A\subseteq R$ for the collection of standard modules such that $R$ is Morita equivalent to $\Lambda$ as a quasi-hereditary algebra. Moreover, this bound quiver is unique in the sense that if $A\subseteq R$ and $B\subseteq S$ are two bound quivers for $\Lambda$, then there exists an algebra isomorphism from $R$ to $S$, which sends $A$ to $B$.  
\end{theorem*}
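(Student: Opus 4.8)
The plan is to treat existence and uniqueness separately, in each case trading the exact category $\mathcal{F}(\Delta)$ for the $A_\infty$-structure on the $\Ext$-algebra of $\Delta=\bigoplus_i\Delta(i)$, the basic direct sum of standard modules, together with a bocs assembled from it. Two structural facts about a quasi-hereditary algebra $\Lambda$ will be used throughout: the subcategory $\mathcal{F}(\Delta)$ is resolving in $\modu\Lambda$, has enough relative projectives, and has a projective generator, namely the regular module $\Lambda$ (whose indecomposable summands are $\Delta$-filtered); and quasi-heredity amounts to a triangularity of extensions, in that, for a suitable ordering of the weights, $\Ext^m_\Lambda(\Delta(i),\Delta(j))\neq 0$ forces $i<j$ for all $m\geq 1$ --- which one proves by induction on $m$, starting from the observation that a projective cover of $\Delta(i)$ has kernel filtered by $\Delta(k)$ with $k>i$. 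This triangularity is what will force the algebra $A$ we produce to be directed, hence basic, and to satisfy $\mathcal{F}(\Delta_A)=\modu A$.

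\emph{Existence.} First I would equip $E=\bigoplus_{m\geq 0}\Ext^m_\Lambda(\Delta,\Delta)$ with a minimal $A_\infty$-structure $(m_n)_{n\geq 2}$ by homotopy transfer (Kadeishvili) from the dg endomorphism algebra of a projective resolution of $\Delta$. Dualising this $A_\infty$-structure produces a bocs $\mathfrak{B}$: its underlying algebra $A$ is the directed algebra $\Bbbk Q_A/I_A$ with $Q_A$ the quiver on the weights having $\dim_\Bbbk\Ext^1_\Lambda(\Delta(i),\Delta(j))$ arrows $i\to j$ and $I_A$ the relations read off from $\Ext^2_\Lambda(\Delta,\Delta)$ together with the higher products $m_n$ into $E^2$, while the kernel of $\mathfrak{B}$ encodes $\End_\Lambda(\Delta)=E^0$ and the remaining operations; by the triangularity above $Q_A$ is acyclic, so $A$ is directed and basic. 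Then I would take $R$ to be the right algebra of $\mathfrak{B}$. The dictionary between directed bocses and highest weight categories (going back to Burt--Butler, in the form needed here Koenig--K\"ulshammer--Ovsienko) then supplies all of the asserted properties at once: $R$ carries a quasi-hereditary structure together with an exact equivalence $\modu\mathfrak{B}\simeq\mathcal{F}(\Delta_R)$; induction $R\otimes_A-$ is exact, makes $R$ projective as a right $A$-module, and sends each simple $A$-module $S_A(i)=\Delta_A(i)$ to $\Delta_R(i)$, whence $R\otimes_A A/\rad A\cong\bigoplus_i\Delta_R(i)$; the inclusion $A\subseteq R$ is normal; and regularity holds because by construction $E\cong\Ext^*_A(A/\rad A,A/\rad A)$ and the Borel extension is homologically faithful enough to identify $\Ext^{\geq 1}_A(A/\rad A,A/\rad A)$ with $\Ext^{\geq 1}_R(\Delta_R,\Delta_R)$. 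It remains to see that $R$ is Morita equivalent to $\Lambda$ as a quasi-hereditary algebra: the chain of exact equivalences $\mathcal{F}(\Delta_\Lambda)\simeq\modu\mathfrak{B}\simeq\mathcal{F}(\Delta_R)$ preserves basic projective generators and their endomorphism rings as well as the marked standard objects, so it identifies $R$ with the basic representative of $\Lambda$ compatibly with standard modules.

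\emph{Uniqueness.} Now let $A\subseteq R$ and $B\subseteq S$ be two bound quivers for $\Lambda$. Both $R$ and $S$ are basic --- this is forced by $A$ basic together with $R\otimes_A A/\rad A\cong\bigoplus_i\Delta_R(i)$ --- and both are Morita equivalent to $\Lambda$ as quasi-hereditary algebras, so there is an algebra isomorphism $S\to R$ matching standard modules; composing with it, we may assume $S=R$, and the problem becomes: any two normal regular exact Borel subalgebras $A,B$ of one and the same $R$ are conjugate by an element of $1+\rad R$. The two Borel extensions give $A_\infty$-isomorphisms $\Ext^*_A(A/\rad A,A/\rad A)\cong\Ext^*_R(\Delta_R,\Delta_R)\cong\Ext^*_B(B/\rad B,B/\rad B)$, and since a basic directed algebra over an algebraically closed field is reconstructed from the $A_\infty$-$\Ext$-algebra of its semisimple quotient (quiver from $\Ext^1$, relations from $\Ext^2$ and the higher products) and that $A_\infty$-structure is unique up to $A_\infty$-isomorphism by Kadeishvili, one gets an abstract isomorphism $A\cong B$. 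To upgrade it to an inner automorphism of $R$ I would argue in the spirit of the Wedderburn--Malcev theorem: normality exhibits the two embeddings $A\hookrightarrow R$ and $B\hookrightarrow R$ as two splittings of one and the same datum, which one transports into each other one step at a time along the radical filtration of $R$, the obstruction at each step living in an $\Ext^1$-type group that vanishes precisely by regularity; composing the resulting conjugations yields $u\in 1+\rad R$ with $uAu^{-1}=B$.

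\emph{The main obstacle} lies in two places. For existence, it is the verification that the bocs built from the $A_\infty$-structure really has a \emph{normal} Borel subalgebra and that its right algebra genuinely reproduces $\Lambda$ up to Morita equivalence \emph{as a quasi-hereditary algebra} --- equivalently, the realisability of an arbitrary quasi-hereditary algebra by a directed bocs. For uniqueness, it is the Wedderburn--Malcev-type integration step, i.e.\ passing from the $A_\infty$-isomorphism of $\Ext$-algebras to an honest conjugation inside $R$, where one has to confirm that each successive obstruction class vanishes. The remaining points --- triangularity of $\Ext$, directedness and basicness of $A$, exactness and flatness of $R\otimes_A-$, and the bookkeeping matching up the various $\Ext$-algebras --- are routine given the cited machinery.
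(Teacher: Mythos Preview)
Your existence sketch is in the right spirit and broadly matches the paper's approach (minimal $A_\infty$-model on $\Ext^*_\Lambda(\Delta,\Delta)$, dualise/bar-construct to a bocs, take the right algebra). However, there is a factual error that propagates through both halves of your argument: you assert that $R$ is basic, deducing this from $A$ being basic and $R\otimes_A A/\rad A\cong\bigoplus_i\Delta_R(i)$. This is false in general. The paper stresses precisely this point: while $A$ is required to be basic, ``the algebra $R$ is usually quite far from being basic'', and Conde's criterion shows that the basic representative of $\Lambda$ has a regular exact Borel subalgebra only under the restrictive condition $\rad\Delta_i\in\mathcal{F}(\nabla)$. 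The explicit examples in the paper (e.g.\ the zig-zag $\mathbb{A}_{2n+1}$ quiver) exhibit multiplicities $\ell_i$ growing without bound. So your sentence ``it identifies $R$ with the basic representative of $\Lambda$'' is incorrect, though the Morita equivalence itself survives via the equivalence $\mathcal{F}(\Delta_\Lambda)\simeq\mathcal{F}(\Delta_R)$ and comparison of projective generators.

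The more serious problem is in uniqueness. Your strategy is to reduce to $R=S$ (via the false basicness claim) and then prove that two Borel subalgebras of a single $R$ are \emph{conjugate} by a unit in $1+\rad R$. But the paper explicitly records this conjugacy statement as an open \emph{conjecture}; the theorem only asserts the existence of an algebra isomorphism $R\to S$ carrying $A$ to $B$, which is genuinely weaker. Your proposed Wedderburn--Malcev-style obstruction argument is therefore aimed at something strictly stronger than what is being claimed, and there is at present no proof that the obstructions vanish. The paper's actual route is different: starting from a given pair $(A,V)$ with chosen presentations, one \emph{constructs} an $A_\infty$-structure on $\Ext^*_\Lambda(\Delta,\Delta)$ whose associated bocs reproduces exactly that $(A,V)$; Kadeishvili's uniqueness of minimal models up to $A_\infty$-isomorphism then yields the desired algebra isomorphism. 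This inverse-construction step is the substantive content, and your proposal does not address it.
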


It is currently open whether if $R=S$, the algebra automorphism can be given by conjugation by an invertible element. The existence part of this theorem has been generalised to the case of (pre-)standardly stratified algebras and the collection of (proper) standard modules by Bautista, P\'erez, and Salmer\'on \cite{BPS23} as well as Goto \cite{Got22}. Note that in the latter case, in contrast to the other cases the subalgebra $A$ is  no longer quasi-hereditary. We expect a corresponding uniqueness result in this case and more generally, existence and uniqueness to be true in much larger generality.

In the known cases, the theorem is obtained by studying the $A_\infty$-structure on the $\Ext$-algebra of the standard modules. Applying a truncated form of $A_\infty$-Koszul duality yields a differential graded structure on a tensor algebra $T_A(\overline{V})$. By a theorem of Roiter, which can be seen as a special case of a relative Koszul duality, such a datum corresponds to an $A$-coring structure on $A\oplus \overline{V}$ whose dual is the algebra extension $A\subseteq R$. 

In the cases of projective modules and simple modules above it is true that for a bound quiver $A\subseteq R$, both algebras $A$ and $R$ are in fact basic. This is usually not true in the case of standard modules for a quasi-hereditary algebra. In fact, Conde proved that $R$ being basic is equivalent to the unusual condition that the Jacobson radical of all standard modules is filtered by the costandard modules. She also provided a recursive formula to compute the decomposition multiplicities of $R$. Several examples of bound quivers have been computed recently. In particular, we mention results by Thuresson for dual extension algebras \cite{Thu22} and path algebras of Dynkin type $\mathbb{A}$, see \cite{Thu24}, and results by Rodriguez Rasmussen on skew group algebras of quasi-hereditary algebras, see \cite{Rod23}. In this survey, we add to this list by computing more examples of hereditary algebras as well as principal blocks of BGG-category $\mathcal{O}$ for dihedral groups. 

The article is organised as follows. In Section \ref{bound-quiver-abelian} we recall the theory of bound quivers for abelian categories with an emphasis of those aspects that generalise to the proposed setting of exact categories. In particular, we describe the perspective of $A_\infty$-structures on $\Ext^*_A(A/\rad(A),A/\rad(A))$ as a means to understand the presentation of an algebra by quiver and relations. We resume in Section \ref{exact-cats} with recalling the notion of an exact category and surveying on some classical and recent results regarding categories of modules with a filtration and simple objects in exact categories. In Section \ref{regular-exact-subalgebras} we propose our notion of a regular exact subalgebra and how it is motivated by the theory of theory of regular bocses or corings. In Section \ref{qh-case} we study in detail the case of quasi-hereditary algebras, where the proposed notion has been worked out to a large extent. In particular we state the main theorems of our collaborations with Steffen Koenig and Sergiy Ovsienko as well as with Vanessa Miemietz. We also highlight recent results of Teresa Conde, which is helpful to obtain partial information without using the $A_\infty$-machinery at all. The article concludes with Section \ref{examples} surveying recent results on particular classes of quasi-hereditary algebras as well as giving new examples of quasi-hereditary algebras with exact Borel subalgebras.  

Throughout, let $\Bbbk$ be a field. All algebras, modules, categories, etc. are assumed to be $\Bbbk$-linear, and usually finite-dimensional unless noted otherwise. By convention, if the side of the action is not specified, module means left module. 

\section{Bound quivers for abelian categories}\label{bound-quiver-abelian}

Abelian categories were introduced by Buchsbaum \cite{Buc55} and Grothendieck \cite{Gro57} for the purpose of unifying different cohomology theories. By their definition of being additive categories admitting all kernels and cokernels such that Noether's first isomorphism theorem holds, they are the first abstract notion to study extensions of objects given by short exact sequences. Important instances of abelian categories are provided by module categories of rings as well as categories of sheaves of abelian groups. The former is often taken as the prototypical example with proofs only given in the case of modules and then invoking Freyd--Mitchell's embedding theorem \cite[Theorem 4.4]{Mit64} that every (small) abelian category embedds as a full extension-closed subcategory of the module category over some ring. While this makes general abelian categories more accessible, module categories over general rings can still be quite complicated to study from a representation-theoretic perspective, for example in cases where the Jordan--H\"older theorem and the Krull--Remak--Schmidt theorem don't hold. This is why in the context of the `International Conference on Representations of Algebras' one often restricts to categories of finite-dimensional modules over finite-dimensional algebras or related categories.

\begin{definition}
An abelian category is called a \textbf{length category} if it is essentially small and every object has a finite composition series. 
\end{definition}

The following theorem is well-known, see e.g. \cite[(8.2)]{Gab73}, \cite[Corollary 2.17]{Del90}, and \cite[Theorem 2.11]{Paq18}.

\begin{theorem}
Let $\mathcal{A}$ be a $\Bbbk$-linear $\Hom$-finite abelian length category. Then the following are equivalent:
\begin{enumerate}
\item $\mathcal{A}$ admits a generator, i.e. an object $X$ such that every object of $\mathcal{A}$ is a quotient of a direct sum of copies of $\mathcal{A}$. 
\item $\mathcal{A}$ has a projective generator.
\item $\mathcal{A}$ has finitely many simple objects and enough projectives.
\item $\mathcal{A}$ has finitely many simple objects, is $\Ext^1$-finite, and there is a common bound for the Loewy length of all objects.
\item $\mathcal{A}$ is equivalent to the category of finite-dimensional modules over a finite-dimensional algebra. 
\end{enumerate}
\end{theorem}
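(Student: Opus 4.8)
The plan is to establish the cycle of implications $(5)\Rightarrow(2)\Rightarrow(1)\Rightarrow(4)\Rightarrow(3)\Rightarrow(2)$ together with $(2)\Rightarrow(5)$, so that all five statements become equivalent. The implications $(5)\Rightarrow(2)\Rightarrow(1)$ are immediate: the regular module of a finite-dimensional algebra is a projective generator of its module category, and a projective generator is in particular a generator. It is equally immediate from $(5)$ that $\modu A$ has finitely many simples, enough projectives, is $\Ext^{1}$-finite, and has all Loewy lengths bounded (by the nilpotency index of $\rad A$), so the substance lies in the chain $(1)\Rightarrow(4)\Rightarrow(3)\Rightarrow(2)$ and in the Morita-type statement $(2)\Rightarrow(5)$.

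For $(1)\Rightarrow(4)$, let $X$ be a generator. Every simple object is a nonzero quotient of some $X^{n}$, hence of $X$ itself, hence a composition factor of the finite-length object $X$; so there are only finitely many simples. Likewise every object is a quotient of some $X^{n}$, so all Loewy lengths are bounded by the Loewy length of $X$. The delicate point is $\Ext^{1}$-finiteness, for which I would use that a non-split extension $0\to S_{j}\to E\to S_{i}\to 0$ of simples has simple top $S_{i}$; thus if $X^{n}\twoheadrightarrow E$, then already one of the $n$ components $X\to E$ is an epimorphism, so every class of $\Ext^{1}(S_{i},S_{j})$ lies in the kernel of $p^{*}\colon\Ext^{1}(S_{i},S_{j})\to\Ext^{1}(X,S_{j})$ for some epimorphism $p\colon X\to S_{i}$. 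Each such kernel is the image of the connecting map $\Hom(\ker p,S_{j})\to\Ext^{1}(S_{i},S_{j})$ and hence finite-dimensional, with a bound depending only on the length of $X$; since the epimorphisms $p$ range over the finite-dimensional space $\Hom(X,S_{i})$, a dimension count (vacuous when $\Bbbk$ is finite) forces $\Ext^{1}(S_{i},S_{j})$ itself to be finite-dimensional, and an induction on composition length using long exact sequences yields $\Ext^{1}$-finiteness of $\mathcal{A}$.

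For $(4)\Rightarrow(3)$ I would construct, for each of the finitely many simples $S$, a projective cover as a tower of universal extensions: starting from $M=S$, repeatedly replace $M$ by an extension $0\to N\to M'\to M\to 0$ with $N$ semisimple, chosen so that the connecting map $\Hom(N,S_{i})\to\Ext^{1}(M,S_{i})$ is an isomorphism for every $i$; this is possible precisely by $\Ext^{1}$-finiteness, and the choice forces $N\subseteq\rad(M')$. The uniform bound on Loewy lengths forces this tower to stabilise after finitely many steps, and the resulting object $P$ then has top $S$ and satisfies $\Ext^{1}(P,S_{i})=0$ for all $i$, hence $\Ext^{1}(P,-)=0$ on all finite-length objects, i.e.\ $P$ is projective; so $\mathcal{A}$ has enough projectives. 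I expect the \emph{termination of this construction to be the main obstacle}: one must rule out that the tower grows strictly forever, which is exactly where the bound on Loewy lengths — rather than its weaker consequences — is indispensable (it already fails for, say, the category of finite-length torsion modules over $\Bbbk[x]$ supported at a single point). Granting $(3)$, projective covers of arbitrary objects then exist by the classical minimal-length argument (idempotents split in an abelian category), so $P=\bigoplus_{S}P(S)$ over the finitely many simples is a projective generator, which is $(2)$.

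Finally, for $(2)\Rightarrow(5)$, put $A=\End_{\mathcal{A}}(P)^{\op}$; this is finite-dimensional because $\mathcal{A}$ is $\Hom$-finite and $P$ has finite length. The functor $F=\Hom_{\mathcal{A}}(P,-)\colon\mathcal{A}\to\modu A$ is exact because $P$ is projective, faithful because $P$ is a generator, and restricts to an equivalence between $\add P$ and the projective $A$-modules. Using that every object $Y$ admits a presentation $P^{a}\to P^{b}\to Y\to 0$, together with exactness and faithfulness of $F$, a standard diagram chase shows $F$ is full and essentially surjective, hence an equivalence; this closes the cycle and proves the theorem.
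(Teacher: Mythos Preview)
The paper does not prove this theorem; it merely records it as well-known with references to Gabriel, Deligne, and Paquette. So there is no ``paper's proof'' to compare against, and your cycle of implications is essentially the standard route taken in those references. In particular your treatment of $(4)\Rightarrow(3)$ via the tower of universal semisimple extensions is exactly Deligne's construction, and you are right that termination (forced by the Loewy-length bound together with $\Ext^1$-finiteness and finiteness of the set of simples, which jointly bound the length of any object with simple top) is the substantive point. Your $(3)\Rightarrow(2)$ and $(2)\Rightarrow(5)$ are standard and correct.

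There is, however, a genuine gap in your $(1)\Rightarrow(4)$ at the $\Ext^1$-finiteness step. You correctly note that a nonsplit extension of $S_i$ by $S_j$ has simple top and is therefore already a quotient of $X$, hence every nonzero class lies in some $\ker p^{*}$. But the ``dimension count'' you invoke does not follow from what is written: covering a vector space $V$ by subspaces $\ker p^{*}$ of dimension $\le N$, with $p$ ranging over the nonzero elements of a finite-dimensional space, does \emph{not} by itself bound $\dim V$. To make this rigorous over an infinite field one must use that $(p,\xi)\mapsto p^{*}\xi$ is bilinear and appeal to fibre-dimension for the incidence variety inside $\mathbb{P}(\Hom(X,S_i))\times V$, which is a genuine piece of algebraic geometry you have not supplied. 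A cleaner, field-independent fix uses your own observation one step further: if $\xi_1,\dots,\xi_m\in\Ext^1(S_i,S_j)$ are linearly independent over $\End(S_j)$, then the corresponding extension $0\to S_j^{\,m}\to E_m\to S_i\to 0$ still has simple top $S_i$ (no copy of $S_j$ splits off), so $E_m$ is again a quotient of $X$ and $m+1=\ell(E_m)\le\ell(X)$. This bounds $\dim\Ext^1(S_i,S_j)$ outright and makes the rest of your argument go through.
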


Of course, the algebra $A$ is not unique, but only unique up to Morita equivalence. In order to obtain a uniqueness up to isomorphism, one has to restrict to the class of basic algebras, c.f. \cite[(2.1)]{NS43}, as noted by Morita \cite[Theorem 7.5]{Mor58}.

\begin{theorem}
For every finite-dimensional algebra $A$, there exists a basic algebra $B$, which is Morita equivalent to $A$. Moreover, this basic representative is unique up to isomorphism. 
\end{theorem}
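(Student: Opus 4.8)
The plan is to construct the basic representative by cutting $A$ down with a suitable idempotent, and then to derive uniqueness from Morita theory together with the Krull--Remak--Schmidt theorem.

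For existence, I would first decompose the left regular module as $A\cong\bigoplus_{i=1}^n P_i^{m_i}$, where $P_1,\dots,P_n$ are pairwise non-isomorphic indecomposable projectives and $m_i\geq 1$ (Krull--Remak--Schmidt), choose primitive idempotents $e_i\in A$ with $Ae_i\cong P_i$, and set $e:=e_1+\dots+e_n$, so that $Ae\cong\bigoplus_{i=1}^n P_i$. Since $Ae$ is then a projective generator of $\modu A$ (it is projective, and every indecomposable projective, hence every module, is a quotient of a sum of copies of it), the functor $\Hom_A(Ae,-)$ is an equivalence $\modu A\to\modu B$ with $B:=\End_A(Ae)^{\op}\cong eAe$. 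It remains to check that $B$ is basic: the equivalence sends $P_i$ to the pairwise non-isomorphic indecomposable projective $B$-modules $eP_i$, and $B\cong\bigoplus_{i=1}^n eP_i$ contains each of them exactly once.

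For uniqueness, suppose $B$ and $B'$ are both basic and Morita equivalent to $A$, and fix an equivalence $F\colon\modu B\to\modu B'$. The regular module $B$ is a projective generator, so $F(B)$ is a projective generator of $\modu B'$. Here I would invoke that the number of isomorphism classes of simple modules is a Morita invariant: $B$ and $B'$ have the same number $n$ of indecomposable projectives up to isomorphism, say $Q_1,\dots,Q_n$ for $B$ and $Q_1',\dots,Q_n'$ for $B'$. Since $B$ is basic, $B\cong\bigoplus_{i=1}^n Q_i$ with the $Q_i$ pairwise non-isomorphic, so the $F(Q_i)$ are $n$ pairwise non-isomorphic indecomposable projective $B'$-modules and hence exhaust $Q_1',\dots,Q_n'$ up to reordering; as $B'$ is basic, $F(B)\cong\bigoplus_{i=1}^n F(Q_i)\cong\bigoplus_{j=1}^n Q_j'\cong B'$ as $B'$-modules. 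Finally, an equivalence induces isomorphisms of endomorphism rings, so $B^{\op}\cong\End_B(B)\cong\End_{B'}(F(B))\cong\End_{B'}(B')\cong (B')^{\op}$, whence $B\cong B'$.

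The hard part is really the single non-formal ingredient, the Krull--Remak--Schmidt theorem, which is what makes the statement ``each indecomposable projective appears exactly once'' well-posed and what forces a projective generator over a basic algebra with pairwise non-isomorphic indecomposable summands to be isomorphic to the regular module. Everything else is idempotent bookkeeping together with the standard progenerator--Morita dictionary; the only thing to watch is the left/right and $(-)^{\op}$ conventions, which do not affect the isomorphism type of $B$.
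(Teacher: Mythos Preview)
Your argument is correct: the construction of $B=eAe$ via a full set of non-isomorphic primitive idempotents and the uniqueness argument via transporting the regular module along an equivalence are both standard and sound, with Krull--Remak--Schmidt doing the essential work exactly as you say.

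There is nothing to compare against, however: the paper does not prove this theorem. It is a survey, and the statement is quoted as a classical result with references to Nesbitt--Scott and Morita; no argument is given in the text. Your proof is the usual one and would be perfectly appropriate as a supplied proof in this context.
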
 

Of course there are many drawbacks when passing from an arbitrary finite-di\-men\-sio\-nal algebra to the corresponding basic algebra, depending on the context. For example, one cannot make use of the tensor structure in the case of modules over a Hopf algebra, one loses numerical information about the dimension of simple modules, and also algebraic information like the existence of certain subalgebras. However, there are also a lot of positive aspects to it, not least that one has access to the `combinatorial' object of a bound quiver for describing this basic representative. Such a description is provided by Gabriel's structure theorem, see \cite[(8.4)]{Gab73}. 

\begin{theorem}
Let $\Bbbk$ be algebraically closed. Then for every basic finite-dimensional algebra $A$ there is a finite quiver $Q$ and an admissible ideal $I$ such that $A\cong \Bbbk Q/I$.  
\end{theorem}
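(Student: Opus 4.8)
The plan is to produce the presentation explicitly: build the quiver $Q$ from the radical layers of $A$, construct a surjective algebra map $\varphi\colon \Bbbk Q\to A$ by hand, and then check that $I=\ker\varphi$ is admissible. First, since $A$ is basic and $\Bbbk$ is algebraically closed, $A/\rad(A)\cong \Bbbk^n$ for some $n$ (this is the only place where both hypotheses are genuinely used). Lift a complete set of primitive orthogonal idempotents $e_1,\dots,e_n$ with $\sum_i e_i=1$ through $\rad(A)$ via the idempotent lifting lemma, and take $\{1,\dots,n\}$ as the vertex set of $Q$, with exactly $d_{ij}:=\dim_\Bbbk e_i\bigl(\rad(A)/\rad(A)^2\bigr)e_j$ arrows from $i$ to $j$ (transpose the convention if one prefers left modules); both $n$ and the $d_{ij}$ are finite, so $Q$ is a finite quiver.

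Next I would define $\varphi\colon \Bbbk Q\to A$ by sending the trivial path at $i$ to $e_i$ and, for each vertex pair $(i,j)$, sending the chosen arrows from $i$ to $j$ to elements of $e_i\rad(A)e_j$ whose residues form a fixed $\Bbbk$-basis of $e_i\bigl(\rad(A)/\rad(A)^2\bigr)e_j$; extend multiplicatively along paths. Surjectivity is the first substantive point: the image contains all $e_i$, hence the semisimple part, and a set of elements generating $\rad(A)/\rad(A)^2$; since $\rad(A)$ is nilpotent, an induction on the radical filtration (or Nakayama's lemma) shows $\rad(A)^k/\rad(A)^{k+1}$ is spanned by images of paths of length $k$, so $\varphi$ is onto and $A\cong \Bbbk Q/I$ with $I=\ker\varphi$.

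It then remains to verify that $I$ is admissible, i.e.\ $\Bbbk Q^{\geq m}\subseteq I\subseteq \Bbbk Q^{\geq 2}$ for some $m\geq 2$, where $\Bbbk Q^{\geq \ell}$ denotes the ideal generated by paths of length $\geq \ell$. By construction $\varphi(\Bbbk Q^{\geq \ell})\subseteq \rad(A)^\ell$; since $\rad(A)$ is nilpotent, $\rad(A)^m=0$ for some $m$, giving $\Bbbk Q^{\geq m}\subseteq \ker\varphi=I$. For the other inclusion, note $\varphi$ restricts to an isomorphism $\Bbbk Q^0\xrightarrow{\sim}A/\rad(A)$ and induces an isomorphism from the span of the arrows onto $\rad(A)/\rad(A)^2$ by our choice; a two-step filtration argument (if $x\in I$, its degree-$0$ component must vanish, then its degree-$1$ component must vanish) yields $I\subseteq \Bbbk Q^{\geq 2}$. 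The main obstacle throughout is the bookkeeping around the radical filtration—surjectivity and the two low-degree isomorphism statements—all of which rest on Nakayama's lemma together with the strong Wedderburn form $A/\rad(A)\cong\Bbbk^n$; once these are in place the admissibility of $I$ is immediate from nilpotence of $\rad(A)$.
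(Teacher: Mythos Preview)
Your argument is correct and follows essentially the same route as the paper's sketch: construct $Q$ from a complete set of primitive idempotents and the radical layer $\rad(A)/\rad(A)^2$, define a surjection $\Bbbk Q\twoheadrightarrow A$, and check the kernel is admissible. The only cosmetic difference is that the paper phrases the arrows via irreducible maps $\rad(P_\mathtt{i},P_\mathtt{j})/\rad^2(P_\mathtt{i},P_\mathtt{j})$ between indecomposable projectives rather than via $e_\mathtt{i}(\rad A/\rad^2 A)e_\mathtt{j}$, but under $\Hom_A(Ae_\mathtt{i},Ae_\mathtt{j})\cong e_\mathtt{i}Ae_\mathtt{j}$ these are the same spaces; your write-up simply fills in the surjectivity and admissibility details that the paper leaves to the reader.
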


The classical construction of $Q$ to prove this theorem goes as follows: The set of vertices $Q_0$ of the quiver $Q$ is an indexing set for the isomorphism classes of indecomposable projective modules. For two vertices $\mathtt{i}, \mathtt{j}\in Q_0$, the arrows $\mathtt{i}\to \mathtt{j}$ index a basis of the space of irreducible maps $\rad (P_\mathtt{i}, P_\mathtt{j})/\rad^2(P_\mathtt{i}, P_\mathtt{j})$. Sending each arrow to a corresponding basis element of $A$ then defines a surjective algebra homomorphism $\Bbbk Q\twoheadrightarrow A$ whose kernel is an admissible ideal. The assumption on the ground field can be replaced by the assumption on the algebra that $A/\rad A$ is isomorphic to a product of $\Bbbk$. The theorem generalises to perfect fields (or more generally to algebras $A$ such that $A/\rad A$ is separable). We refer the reader to \cite[(4.1)]{B91} and \cite{Ber11} for this generalisation. One may wonder whether it is alternatively possible to use information about the simple objects as they seem to be at least equally important in the definition of a finite abelian category (even more so for a length category). A version of this has been known for a long time and goes back to Govorov \cite{Gov73}, see also \cite{Bon83}:

\begin{lemma}\label{ext-quiver}
Let $A=\Bbbk Q/I$ for a finite quiver $Q$ and an admissible ideal $I$. Then 
\begin{itemize}
\item $\mathbb{D}\Ext^1(L_\mathtt{i},L_\mathtt{j})\cong e_\mathtt{j} Q_+/Q_+^2e_\mathtt{i}$,
\item $\mathbb{D}\Ext^2(L_\mathtt{i},L_\mathtt{j})\cong e_\mathtt{j} I/(IQ_++Q_+I) e_\mathtt{i}$.
\end{itemize}
\end{lemma}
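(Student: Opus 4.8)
The plan is to realize both $\Ext$-spaces as cohomology of the bar-type resolution of the semisimple module $L=\bigoplus_\mathtt{i} L_\mathtt{i}$ and then identify the relevant boundary maps combinatorially. First I would fix notation: write $S=A/\rad A=\bigoplus_\mathtt{i}\Bbbk e_\mathtt{i}$ for the semisimple quotient, $Q_+$ for the arrow ideal of $\Bbbk Q$, so that $\rad A=Q_+/I$ and there is a short exact sequence $0\to I\to \Bbbk Q\to A\to 0$ exhibiting $I$ as the kernel of the structure map. Using the minimal projective resolution of $L$ over $A$, one has $P_0=A$, $P_1=\rad A\otimes_S A$ (up to the appropriate side conventions) with differential induced by multiplication, and $P_2$ given by the space of minimal relations, so that $\Ext^m_A(L,L)\cong\Hom_A(P_m,L)$ with \emph{zero} differential in the range $m\le 2$ because the resolution is minimal (all maps in $\Hom_A(P_\bullet,L)$ vanish as $P_\bullet$ is a complex of radical maps between projectives and $L$ is semisimple). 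Hence $\Ext^1_A(L_\mathtt{i},L_\mathtt{j})\cong\Hom_A(e_\mathtt{j}(\rad A\otimes_S A),L_\mathtt{i})$ and $\Ext^2_A(L_\mathtt{i},L_\mathtt{j})\cong\Hom_A((\text{relations})_\mathtt{j},L_\mathtt{i})$.

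Next I would translate these $\Hom$-spaces into the quotient spaces in the statement. For $\Ext^1$: the $e_\mathtt{j}$-isotypic, $e_\mathtt{i}$-top part of $\rad A$ is $e_\mathtt{j}(\rad A/\rad^2 A)e_\mathtt{i}=e_\mathtt{j}(Q_+/(Q_+^2+I))e_\mathtt{i}$, and since $I\subseteq Q_+^2$ by admissibility this equals $e_\mathtt{j}(Q_+/Q_+^2)e_\mathtt{i}$, which is spanned by the arrows $\mathtt{i}\to\mathtt{j}$; applying $\mathbb{D}=\Hom_\Bbbk(-,\Bbbk)$ gives the first isomorphism after checking the side conventions are consistent with how the arrows act. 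For $\Ext^2$: here I would invoke the standard description of the minimal relation space, namely $\Ext^2_A(L,L)^*\cong I/(Q_+I+IQ_+)$ as a bimodule over $S$; this is the classical fact underlying Gabriel's structure theorem (see the references to \cite{Gab73}, \cite{B91}, \cite{Ber11} cited just above). Restricting to the $(\mathtt{j},\mathtt{i})$-component and dualizing yields the second isomorphism. The cleanest way to get the relation-space statement is to apply $S\otimes_{\Bbbk Q}(-)\otimes_{\Bbbk Q}S$ (i.e., $(-)/(Q_+\cdot + \cdot Q_+)$) to the exact sequence $0\to I\to\Bbbk Q\to A\to 0$ and compare with the minimal resolution, or alternatively to lift a minimal generating set of $I$ to a minimal generating set of $P_2$.

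The main obstacle — really the only nontrivial point — is establishing that the second syzygy of $L$ is given precisely by $I/(Q_+I+IQ_+)$ with the stated bimodule structure; equivalently, that a set of elements of $I$ descends to a basis of $I/(Q_+I+IQ_+)$ if and only if it is a minimal generating set of the ideal $I$, and that $\Hom_A(P_2,L)$ then computes $\Ext^2$ with no correction terms. This is where one uses minimality of the projective resolution in an essential way together with the Nakayama-lemma argument for generating sets of the ideal $I$ over $\Bbbk Q$; the bimodule structure on $I/(Q_+I+IQ_+)$ matches the $S$-$S$-bimodule structure on $\Ext^2$ because both are induced by left/right multiplication by idempotents. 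Everything else — matching sides, inserting the duality $\mathbb{D}$, using $I\subseteq Q_+^2$ to simplify the $\Ext^1$ term — is bookkeeping. Since the lemma is attributed to Govorov and Bongartz, I would present this as a brief reminder rather than a from-scratch argument, citing \cite{Gov73} and \cite{Bon83} for the details and only spelling out the identification of the two syzygies with the displayed quotient bimodules.
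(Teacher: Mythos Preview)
The paper does not prove this lemma at all: it simply states it and attributes it to Govorov \cite{Gov73} and Bongartz \cite{Bon83}. Your sketch via the minimal projective resolution of $L$ is the standard argument and is correct; your closing suggestion to present it as a brief reminder with citations to \cite{Gov73} and \cite{Bon83} is exactly what the paper does, so you are already aligned with it.
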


In other words, the lemma says that the dimension of the first extension group between two simples is given by the number of arrows between the corresponding vertices while the dimension of the second extension group between two simples is given by the number of generators from $\mathtt{i}$ to $\mathtt{j}$ in a minimal generating set of the admissible ideal $I$. This numerical coincidence suggests that there is a way to construct the basic algebra from the $\Ext$-spaces between the simple modules. Unfortunately, life is not simple, and even the (Yoneda) algebra structure on the $\Ext$-groups does not suffice as the following examples shows:

\begin{example}
Let $A=\Bbbk[t]/(t^\ell)$ with $\ell\geq 3$. Then $\Ext^*_A(\Bbbk, \Bbbk)\cong \Bbbk[x,y]/(x^2)$ as graded algebras (with $x$ of degree one and $y$ of degree $2$), independently of the parameter $\ell$, see \cite[Example B.2.2]{Mad02}. 
Analogously, if $A=\Bbbk\mathbb{A}_n/\rad^\ell$, the path algebra of the linearly oriented type $A$ quiver modded out by all paths of length $\ell$, then 
\[\Ext^*_A(A/\rad A,A/\rad A)\cong \begin{tikzcd}\mathtt{1}\arrow{r}{x_\mathtt{1}}\arrow[bend left]{rr}{y_\mathtt{1}} &\mathtt{2}\arrow{r}{x_\mathtt{2}}\arrow[bend left]{rr}{y_\mathtt{2}}&\mathtt{3}\arrow{r}{x_\mathtt{3}}&\dots\arrow{r}{x_{\mathtt{n-1}}}&\mathtt{n}\end{tikzcd}\] 
with relations $x_{\mathtt{i+1}}x_{\mathtt{i}}$, where all $x_\mathtt{i}$ are of degree $1$ and all $y_\mathtt{i}$ are of degree $2$, see \cite[Section 7]{Thu22}. 
\end{example}

More generally, in \cite{GZ94}, Green and Zacharia computed the algebra structure on $\Ext^*_A(A/\rad A, A/\rad A)$ for every monomial algebra $A$ and in this way many more such examples can be constructed. There is however a way to reconstruct $A$ from $\Ext^*_A(A/\rad A, A/\rad A)$, but it needs higher homotopical information, more precisely its $A_\infty$-algebra structure as defined by Stasheff in \cite{Sta63}.

\begin{definition} 
An \textbf{$A_\infty$-algebra} $\mathcal{E}$ is a graded vector space $\mathcal{E}$ together with linear maps $m_n\colon \mathcal{E}^{\otimes n}\to \mathcal{E}$, homogeneous of degree $2-n$, for $n\geq 1$, such that for all $n\geq 1$ the following equation holds:
\[\sum_{\substack{n=r+s+t\\r,t\geq 0, s\geq 1}}(-1)^{r+st} m_{r+1+t} (\id^{\otimes r}\otimes m_s\otimes \id^{\otimes t})=0,\]
where these formulas and subsequent ones are to be understood with the Koszul sign rule implicit: $(f\otimes g)(x\otimes y)=(-1)^{|g|\cdot |x|} f(x)\otimes g(y)$ for homogenous maps $f$ and $g$ and homogenous elements $x$ and $y$. 
\end{definition}

To understand the formulas, it is enlightening to consider low values of $n$. For $n=1$, we obtain a degree $1$ map $m_1$ such that $m_1m_1=0$, i.e. $\mathcal{E}$ can be considered as a cochain complex. The case $n=2$ yields the degree $0$ map $m_2$ satisfying $m_2(\id\otimes m_1+m_1\otimes \id)=m_1m_2$, which is precisely the fact that the multiplication $m_2$ satisfies the graded Leibniz rule with respect to the differential $m_1$. Now for $n=3$, we see that 
\[m_2(\id\otimes m_2-m_2\otimes \id)=m_1m_3+m_3\underbrace{(\id^{\otimes 2}\otimes m_1+\id\otimes m_1\otimes \id+m_1\otimes \id)}_{m_1^{\otimes}}.\] Oberving that $m_1^{\otimes}$ is precisely the differential on the third tensor power of $\mathcal{E}$, we see that $m_2$ is in general not associative, but only associative up to the homotopy $m_3$. The higher $m_n$ provide a coherent system of higher homotopies. While it is convenient to have the flexibility to work with general $A_\infty$-algebras, we are mostly interested in algebras where $m_2$ is in fact associative. The first situation where this happens is when $m_n=0$ for all $n\geq 3$. This gives the (non-full) subcategory of differential graded (dg) algebras. The second situation occurs when $m_1=0$. In this case, the $A_\infty$-algebras are called \textbf{minimal}. In fact, either of these cases suffices to desribe all $A_\infty$-algebras up to $A_\infty$-quasi-isomorphism in the following sense:

\begin{definition}
Let $\mathcal{A}$ and $\mathcal{B}$ be $A_\infty$-algebras. An \textbf{$A_\infty$-morphism} $f\colon \mathcal{A}\to \mathcal{B}$ consists of maps $f_n\colon \mathcal{A}^{\otimes n}\to \mathcal{B}$, homogeneous of degree $1-n$ such that for all $n\geq 1$ the following equations hold:
\begin{equation*}
\begin{split}\sum_{n=r+s+t} (-1)^{r+st} f_{r+1+t}(\id^{\otimes r}\otimes &f_s\otimes \id^{\otimes t})\\
&=\sum_{k, j_1,\dots,j_k} (-1)^{\sum_{u=1}^{k}(k-u)(j_u-1)} m_k(f_{j_1}\otimes \dots\otimes f_{j_k}).
\end{split}
\end{equation*}
An $A_\infty$-morphism is called an \textbf{$A_\infty$-quasi-isomorphism} if $f_1$ is a quasi-isomorphism of complexes. 
\end{definition} 

Again, the reader is encouraged to consider the lower degree cases of $n$ to see that $f_1$ is a homomorphism of complexes and compatible with the multiplications $m_2$ up to the homotopy $f_2$. A note of warning is that in this note we have decided not to go into the details of how to define unitality and augmentation for $A_\infty$-algebras. However, for some of the theorems below these properties are essential. The following theorem provides the existence of a dg model for each $A_\infty$-algebra, which can be convenient to work with since dg algebras only have two non-zero maps attached to them, namely $m_1$ and $m_2$. 

\begin{theorem}[{cf. \cite[p. 9]{K01}}]
For every $A_\infty$-algebra $\mathcal{A}$, there is a differential graded algebra $\mathcal{D}$ together with an $A_\infty$-quasi-isomorphism $g\colon \mathcal{A}\to \mathcal{D}$. 
\end{theorem}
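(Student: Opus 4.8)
The plan is to realise $\mathcal{D}$ as a cobar--bar construction and to read off $g$ from the unit of an adjunction. Recall first that an $A_\infty$-structure $(m_n)_{n\geq 1}$ on a graded vector space $\mathcal{A}$ is the same datum as a coderivation $b$ of degree $1$ on the (reduced, cofree conilpotent) tensor coalgebra $T^c(\mathcal{A}[1])$ on the suspension, subject to $b^2=0$: the identity displayed in the definition of $A_\infty$-algebra is exactly the collection of homogeneous components of $b^2=0$. This makes the \emph{bar construction} $B\mathcal{A}:=\bigl(T^c(\mathcal{A}[1]),b\bigr)$ a coaugmented conilpotent dg coalgebra. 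Moreover, the maps $(f_n)_{n\geq 1}$ of an $A_\infty$-morphism $f\colon\mathcal{A}\to\mathcal{B}$ assemble, by the very shape of the morphism equations above, into a morphism of coaugmented dg coalgebras $Bf\colon B\mathcal{A}\to B\mathcal{B}$, and conversely; filtering $B(-)$ by tensor length (so that the $E_0$-differential is induced by $m_1$) one checks that $f$ is an $A_\infty$-quasi-isomorphism, i.e.\ $f_1$ is a quasi-isomorphism, if and only if $Bf$ is a quasi-isomorphism. So it suffices to realise $B\mathcal{A}$, up to quasi-isomorphism of dg coalgebras, as $B\mathcal{D}$ for some dg algebra $\mathcal{D}$, in a way induced by a coalgebra map out of $B\mathcal{A}$.

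For this, recall the \emph{cobar construction}: for a coaugmented dg coalgebra $C$ with coaugmentation coideal $\overline{C}$, the tensor \emph{algebra} $\Omega C:=\bigl(T(\overline{C}[-1]),\partial\bigr)$ with differential assembled from the internal differential of $C$ and its reduced comultiplication is a genuine augmented dg algebra, and $\Omega$ is left adjoint to $B$. Set $\mathcal{D}:=\Omega B\mathcal{A}$, a dg algebra. The unit of the adjunction at $B\mathcal{A}$ is a morphism of dg coalgebras $\eta\colon B\mathcal{A}\to B\Omega B\mathcal{A}=B\mathcal{D}$, hence by the correspondence above it has the form $\eta=Bg$ for a unique $A_\infty$-morphism $g\colon\mathcal{A}\to\mathcal{D}$. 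Finally, $g$ is an $A_\infty$-quasi-isomorphism: I would invoke the classical fact that for a coaugmented \emph{conilpotent} dg coalgebra $C$ the unit $\eta_C\colon C\to B\Omega C$ is a quasi-isomorphism, apply it to the conilpotent coalgebra $C=B\mathcal{A}$, and conclude via the last assertion of the previous paragraph that $g_1$, hence $g$, is a quasi-isomorphism.

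Most of the work is bookkeeping: fixing Koszul signs and suspension conventions consistently across $B$ and $\Omega$, and---something this note has deliberately left aside---pinning down unitality and augmentation for $A_\infty$-algebras so that $\Omega B\mathcal{A}$ genuinely lands among augmented dg algebras (in practice one restricts to strictly unital augmented $A_\infty$-algebras, or formally adjoins a unit). The one input that is not purely formal is the assertion that $\eta_C$ is a quasi-isomorphism, equivalently that $B$ both preserves and reflects quasi-isomorphisms; I expect this to be the main obstacle. It is proved by comparing the tensor-length filtration spectral sequences of $B\mathcal{A}$ and $B\mathcal{D}$, whose $E_1$-pages are tensor coalgebras built on $m_1$-homology, and it is exactly here that a convergence argument enters: it is harmless in the length-category setting of the present paper, where everything is finite-dimensional in each degree, but in general it is what forces the conilpotence hypothesis on $C$. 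A more hands-on alternative, sidestepping the explicit spectral sequence, is to run the homological perturbation lemma on a suitable quasi-isomorphic dg resolution; but the bar--cobar argument is the most economical.
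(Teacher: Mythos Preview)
Your argument is correct and is exactly the construction the paper has in mind: it does not give a proof but only cites \cite{K01} and remarks immediately afterwards that the dg model is ``roughly speaking the tensor algebra over the tensor coalgebra of $\mathcal{A}$'', i.e.\ $\mathcal{D}=\Omega B\mathcal{A}$. Your identification of $g$ via the unit of the bar--cobar adjunction and your flagging of the one nontrivial input (that $\eta_C$ is a quasi-isomorphism for conilpotent $C$) are spot on.
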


However, even if $\mathcal{A}$ is finite-dimensional, the general existence result only provides an infinite-dimensional $\mathcal{D}$ (roughly speaking the tensor algebra over the tensor coalgebra of $\mathcal{A}$) that in practice will be hard to work with. Therefore, minimal $A_\infty$-algebras seem to be much closer in spirit to representation theory of quivers with relations as they provide the representative of an $A_\infty$-quasi-isomorphism class of lowest dimension. The existence of such minimal models was first proven by Kadeishvili in \cite{Kad82}.

\begin{theorem}
For every $A_\infty$-algebra $\mathcal{A}$, there is a minimal $A_\infty$-algebra $\mathcal{E}$ together with an $A_\infty$-quasi-isomorphism $f\colon \mathcal{E}\to \mathcal{A}$. In fact, $\mathcal{E}$ is even unique up to $A_\infty$-isomorphism.
\end{theorem}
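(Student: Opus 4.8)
The plan is to construct the minimal model $\mathcal{E}$ by taking cohomology and transporting structure along a choice of homotopy, and then to prove uniqueness by the standard obstruction-theoretic argument lifting an isomorphism degree by degree. First I would set $\mathcal{E} := H^*(\mathcal{A}, m_1)$ as a graded vector space, with $m_1^{\mathcal{E}} = 0$, and choose a decomposition of each $\mathcal{A}^n$ into cocycles, a complement of coboundaries inside the cocycles (identified with $\mathcal{E}^n$ via a chosen splitting $i\colon \mathcal{E}\to \mathcal{A}$), and a complement of the cocycles; this yields a contracting homotopy $h\colon \mathcal{A}\to \mathcal{A}$ of degree $-1$ together with the projection $p\colon \mathcal{A}\to \mathcal{E}$ satisfying $ip = \id - (m_1 h + h m_1)$, $pi = \id$, and the side conditions $hh=0$, $ph=0$, $hi=0$. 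Then I would define $m_n^{\mathcal{E}}$ and the components $f_n\colon \mathcal{E}^{\otimes n}\to \mathcal{A}$ simultaneously by induction on $n$: set $f_1 = i$, $m_2^{\mathcal{E}} = p\, m_2 (i\otimes i)$, and in general express $f_n$ as a sum of terms of the shape $h\, m_k(f_{j_1}\otimes\cdots\otimes f_{j_k})$ over planar trees with $n$ leaves, and $m_n^{\mathcal{E}} = p\, m_k(f_{j_1}\otimes\cdots\otimes f_{j_k})$ summed over the same combinatorial data (this is the usual tree formula). The verification that these satisfy the $A_\infty$-algebra relations for $\mathcal{E}$ and the $A_\infty$-morphism relations for $f$ is a bookkeeping computation using the $A_\infty$-relations for $\mathcal{A}$, the side conditions on $h$, and $m_1 i = 0$, $p m_1 = 0$; since the paper explicitly says not to grind through routine calculations, I would carry out the $n=2,3$ cases to exhibit the pattern and invoke the homological perturbation lemma for the general step. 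That $f_1 = i$ is a quasi-isomorphism is immediate, so $f$ is an $A_\infty$-quasi-isomorphism.

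For uniqueness, suppose $\mathcal{E}$ and $\mathcal{E}'$ are both minimal $A_\infty$-algebras equipped with $A_\infty$-quasi-isomorphisms to $\mathcal{A}$. One first reduces to comparing them directly: using that an $A_\infty$-quasi-isomorphism between $A_\infty$-algebras admits an $A_\infty$-quasi-inverse (itself proved by an analogous inductive construction, or quoted from the literature), one obtains an $A_\infty$-quasi-isomorphism $g\colon \mathcal{E}\to\mathcal{E}'$. Since both are minimal, $m_1 = 0$ on each side, so $g_1\colon \mathcal{E}\to\mathcal{E}'$ is a quasi-isomorphism of complexes with zero differentials, hence an isomorphism of graded vector spaces. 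The point is then to upgrade $g$ to an $A_\infty$-\emph{isomorphism}, i.e. to show $g$ already is one (it is, because an $A_\infty$-morphism with invertible linear component is invertible as an $A_\infty$-morphism — one constructs the components of the inverse inductively, each step only requiring that $g_1$ be invertible). This gives $\mathcal{E}\cong\mathcal{E}'$ as $A_\infty$-algebras.

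The main obstacle is organising the inductive construction so that the sign conventions and the Koszul rule are handled consistently: the tree-summation formulas for $m_n^{\mathcal{E}}$ and $f_n$ involve signs $(-1)^{r+st}$ and the weighted sums $(-1)^{\sum (k-u)(j_u-1)}$ appearing in the morphism equation, and checking that the $A_\infty$-relations for $\mathcal{A}$ collapse—after applying $p$ or $h$ and using $hh=0$, $ph = 0$, $hi = 0$—precisely to the desired relations for $\mathcal{E}$ and $f$ is where all the real work lies. A clean way to manage this is to phrase everything in terms of the bar construction: an $A_\infty$-structure on $\mathcal{E}$ is a square-zero coderivation on the tensor coalgebra $T^c(\mathcal{E}[1])$, an $A_\infty$-morphism is a dg coalgebra map, and the homological perturbation lemma applied to the induced contraction of tensor coalgebras produces the coderivation and the coalgebra map automatically, with the signs taken care of by the suspension. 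For uniqueness one then notes that any morphism of tensor coalgebras whose corestriction to $\mathcal{E}[1]$ in tensor-degree one is an isomorphism is itself invertible, by inverting the lower-triangular part of the induced map on $T^c$ filtered by tensor length. I would present the argument in this coalgebraic language to keep the sign bookkeeping implicit, and refer to \cite{Kad82} and \cite{K01} for the details that are standard.
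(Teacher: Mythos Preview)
The paper does not give a self-contained proof of this theorem: it cites Kadeishvili \cite{Kad82} for existence, and the only argument offered is the sentence immediately after the statement, observing that any $A_\infty$-quasi-isomorphism between minimal $A_\infty$-algebras is automatically an $A_\infty$-isomorphism (since $f_1$ is then an isomorphism), together with the subsequent paragraph describing Merkulov's construction in the special case of a dg algebra. Your proposal is correct and is precisely the standard proof; in particular your existence argument via a contraction $(i,p,h)$ and tree formulas is the general $A_\infty$ version of the Merkulov construction the paper spells out for dg algebras, and your uniqueness argument (produce a quasi-inverse, then use that an $A_\infty$-morphism with invertible linear part is invertible) is exactly the content of the paper's one-sentence remark. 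There is nothing to correct; you have simply supplied the details the paper chose to outsource to the literature.
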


The uniqueness up to $A_\infty$-isomorphism is in fact not that surprising as every $A_\infty$-quasi-isomorphism between minimal $A_\infty$-algebras is in fact an $A_\infty$-isomorphism (which is equivalent to $f_1$ being an isomorphism). We are now in a position to equip $\Ext^*_A(M,M)$ with the structure of an $A_\infty$-algebra for every $A$-module $M$. Let $P^*$ be a projective resolution of $M$. Then $\mathcal{D}=\End_A(P^*)$, the dg endomorphism algebra of $P^*$ is a dg algebra with differential 
\[\delta(f)=d_P\circ f-(-1)^{|f|}f\circ d_P\]
on homogeneous elements $f$. Its minimal model is isomorphic to $\Ext^*_A(M,M)$ as a graded algebra, but in general it has a lot of higher multiplications $m_n$. So far, we have given the minimal model as a pure existence result, but in fact for a dg algebra $\mathcal{D}$ there is Merkulov's construction to produce it. For this proceed with the following steps. Let $\mathcal{D}$ be a dg algebra. Choose a graded complement $\mathcal{L}$ of the cycles $\mathcal{Z}$ of $\mathcal{D}$. Then choose a graded complement $\mathcal{H}$ of the boundaries $\mathcal{B}$ inside the cycles $\mathcal{Z}$. The differential $d$ of $\mathcal{D}$ can thus be viewed as an isomorphism $d\colon \mathcal{L}\to \mathcal{B}$. Define $h\colon \mathcal{D}\to \mathcal{D}$ via $h=0$ on $\mathcal{H}\oplus \mathcal{L}$ and $h=d^{-1}$ on $\mathcal{B}$. Inductively defining 
\[\lambda_n=\sum_{\substack{r+s=n\\r,s\geq 1}}(-1)^{r-1}\lambda_2(h\lambda_r\otimes h\lambda_s)\]
where $\lambda_2$ is the multiplication on $\mathcal{D}$ and $h\lambda_1=\id$ by convention yields maps $\lambda_n\colon \mathcal{D}^{\otimes n}\to \mathcal{D}$, sometimes called the $\lambda$-kernels. Setting $\mathcal{E}=H^*(\mathcal{D})$ and choosing an isomorphism $\mathcal{E}\to \mathcal{H}$ yields an inclusion $i\colon \mathcal{E}\to \mathcal{D}$ and a projection $p\colon \mathcal{D}\to \mathcal{E}$. Finally, the maps $m_n\colon p\lambda_n i^{\otimes n}$ equip $\mathcal{E}$ with the structure of an $A_\infty$-algebra. 

We illustrate the construction with the first example of a biserial algebra, which is not special biserial, see \cite[p. 175]{SW83}.

\begin{example}
Let $A$ be the quotient of the path algebra of 
\[
\begin{tikzcd}
\mathtt{1}\arrow{r}{\alpha}&\mathtt{2}\arrow{rr}{\beta}\arrow{rd}{\gamma}&&\mathtt{4}\arrow{r}{\varepsilon}&\mathtt{5}\\
&&\mathtt{3}\arrow{ru}{\delta}
\end{tikzcd}
\]
with relations $(\beta-\delta\gamma)\alpha$ and $\varepsilon\beta$. This algebra is of global dimension $2$, thus the algebra $\Ext^*_A(L,L)$ is (apart from the identity morphisms) concentrated in degrees $1$ and $2$. Each arrow corresponds to an element of $\Ext^1$ while the two relations correspond to elements of $\Ext^2$. One can check that in the $\Ext$-algebra the elements corresponding to $\alpha$ and $\beta$ (resp. $\beta$ and $\varepsilon$) compose to these elements of $\Ext^2$. However, the result of the $A_\infty$-structure depends on the choice of representative for the elements of $\Ext^1$ in the dg algebra $\mathcal{D}=\End_A(P^*)$. In particular, the extension corresponding to the arrow $\gamma\colon \mathtt{2}\to \mathtt{3}$ can be lifted in two entirely different ways:
\[
\begin{tikzcd}
0\arrow{r}&P(\mathtt{5})\arrow{d}{g}\arrow{r}&P(\mathtt{3})\oplus P(\mathtt{4})\arrow{d}{(\id, f)}\arrow{r}&P(2)\to 0\\
0\arrow{r}&P(\mathtt{4})\arrow{r}&P(\mathtt{3})\arrow{r}&0
\end{tikzcd}
\] 
We can choose to represent it with $(g,f)=(0,0)$ or $(g,f)=(\rho_\delta,\rho_\varepsilon)$, where $\rho$ denotes right multiplication by the element in the index. With the first choice, one has that $i(\gamma)i(\alpha)\neq 0$, but $i(\delta)i(\gamma)=0$, yielding $m_3(\delta,\gamma,\alpha)\neq 0$. With the second choice, one has $i(\gamma)i(\delta)=0$, but $i(\delta)i(\gamma)\neq 0$, yielding $m_3(\varepsilon,\delta,\gamma)\neq 0$. This doesn't contradict uniqueness of the $A_\infty$-structure up to isomorphism because the existence of the sequence of maps $f_n$ makes this notion very flexible. We will soon see that this behaviour is to be expected since the algebra $A$ is isomorphic to the algebra $B$ with the same quiver but relations $\beta\alpha$ and $\varepsilon(\beta-\delta\gamma)$.
\end{example}

For the remainder of the section let $A$ be a (basic) algebra such that $L:=A/\rad A\cong \prod_{\mathtt{i}=\mathtt{1}}^\mathtt{n}\Bbbk$, so that $A$ is isomorphic to the quotient of a path algebra of a finite quiver by an admissible ideal. We now want to give an alternative way to construct this quiver and relations, in this case starting with the simples instead of the projectives, but making use of $A_\infty$-algebras. We start with the following lemma

\begin{lemma}[{\cite[Proposition 1 b)]{K02}}]\label{ext-1-generation}
As an $A_\infty$-algebra, $\Ext^*_A(L,L)$ is generated in degree $1$ (and strictly speaking also in degree zero, consisting only of linear combinations of the identities, by Schur's lemma). 
\end{lemma}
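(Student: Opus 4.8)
The plan is to replace $\mathrm{RHom}_A(L,L)$ by an especially small dg model that is \emph{free as a graded algebra on generators in degree $1$}, and then to run the homotopy transfer recalled above. Since $L=A/\rad A\cong\prod_{\mathtt i}\Bbbk$ is separable, the Wedderburn--Malcev theorem gives a subalgebra $L\subseteq A$ with $A=L\oplus\rad A$ as $L$-bimodules; dualising, $A^\vee$ is a coaugmented $L$-coalgebra with coaugmentation coideal $(\rad A)^\vee$. Its reduced cobar construction relative to $L$ is the dg $L$-algebra $\mathcal D=\Omega(A^\vee)=T_L(W)$ whose underlying graded algebra is the tensor algebra on the single space $W=(\rad A)^\vee$ placed in cohomological degree $1$, equipped with the derivation dual to the multiplication $\rad A\otimes_L\rad A\to\rad A$. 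By bar--cobar duality $H^*(\mathcal D)\cong\mathrm{Cotor}^{A^\vee}(L,L)\cong\Ext^*_A(L,L)$ compatibly with the Yoneda product, so $\mathcal D$ is the required model; one may reach the same algebra directly from the reduced relative bar resolution of $L$ over $A$.

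It then suffices to prove that the minimal $A_\infty$-model of any dg $L$-algebra $\mathcal D=T_L(W)$ with $\mathcal D^0=L$ and $W$ concentrated in degree $1$ is generated in degrees $0$ and $1$. I would apply Merkulov's construction with respect to a decomposition $\mathcal D^n=\mathcal H^n\oplus\mathcal B^n\oplus\mathcal L^n$ refining the cohomological grading, obtaining $\Ext^*_A(L,L)\cong\bigoplus_n\mathcal H^n$ with transferred products $m_k=p\lambda_k i^{\otimes k}$, a contracting homotopy $h$ vanishing on $\mathcal H\oplus\mathcal L$ and inverting $d$ on $\mathcal B$, and an $A_\infty$-quasi-isomorphism $f$ with $f_1=i$. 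The heart of the argument is an induction on cohomological degree showing that each $\mathcal H^n$ lies in the span of the elements produced from $\mathcal H^0\cup\mathcal H^1=L\oplus i(\Ext^1_A(L,L))$ by repeatedly applying the multiplication $\lambda_2$ of $\mathcal D$ and the homotopy $h$; here one uses decisively that $\mathcal D^n=\mathcal D^1\cdots\mathcal D^1$ is spanned by $n$-fold products of degree-$1$ elements. Feeding such an expression through the projection $p$ and the recursive definition of the $\lambda_k$ rewrites the class it represents as an iterated higher product $m_{k_1}(\cdots)$ of elements of $\Ext^1_A(L,L)$ and identity morphisms, which is exactly the assertion of Lemma \ref{ext-1-generation} (the degree-$0$ part entering only as the ``$L$-coefficients'').

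The step I expect to be the real obstacle is the base of this induction in degree $1$: one must show that the non-cocycle part $\mathcal L^1$ of the generators of $\mathcal D$ is already reached from $\mathcal H^0\cup\mathcal H^1$ under $\lambda_2$ and $h$ — equivalently, that $\mathcal B^2$ is spanned by products of degree-$1$ cocycles — and then to propagate this through all higher degrees while keeping control of the boundary terms introduced both by $h$ inside the $\lambda$-kernels and by the higher components $f_k$ of the quasi-isomorphism. A more structural alternative would bypass this bookkeeping entirely: by bar--cobar duality the bar construction of the minimal model of $\mathcal D=\Omega(A^\vee)$ is quasi-isomorphic to $A^\vee$, and one can recast ``generated in degree $1$'' as a property of this bar construction which becomes equivalent to $A$ admitting a presentation by a quiver and an admissible ideal of relations, i.e.\ to Gabriel's structure theorem.
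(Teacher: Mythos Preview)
The paper does not give its own proof of this lemma; it merely cites Keller \cite[Proposition~1~b)]{K02}. So there is nothing to compare against directly, and I will just comment on the soundness of your outline.

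Your choice of model is exactly right: the reduced cobar construction $\mathcal{D}=\Omega(A^\vee)=T_L(W)$ with $W=(\rad A)^\vee$ in degree~$1$ is the natural small dg model, and the fact that $\mathcal{D}$ is free on degree-$1$ generators is precisely what one wants to exploit. Two points deserve sharpening.

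First, your ``equivalently'' in the final paragraph is not correct. It is \emph{not} true in general that $\mathcal{B}^2$ is spanned by products of degree-$1$ cocycles: already for $A=\Bbbk[t]/(t^4)$ one has $d((t^3)^*)=-(t^*\otimes (t^2)^*+(t^2)^*\otimes t^*)$, and $(t^2)^*$ is not a cocycle. The correct way to break the apparent circularity is not induction on the cohomological degree $n$ but induction on the \emph{coradical filtration} $W_k=(\rad A/\rad^{k+1}A)^\vee$ of $W$: one has $W_1=\mathcal{H}^1$, and $d(W_k)\subseteq\sum_{i+j=k}W_i\otimes_L W_j$ with $i,j<k$, so every element of $\mathcal{L}^1\cap W_k$ is $h$ of a product of elements reachable by induction. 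Nilpotence of $\rad A$ guarantees the filtration is exhaustive. This is the step where finite-dimensionality (more precisely, admissibility of the ideal of relations) enters.

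Second, even after one knows all of $\mathcal{D}$ is reachable from $\mathcal{H}^0\cup\mathcal{H}^1$ via $\lambda_2$ and $h$, the passage to ``$\mathcal{E}$ is generated in degree~$1$'' is not automatic: iterated $m_k$'s involve $ip$-insertions rather than $h$-insertions between the $\lambda$'s, and matching the two kinds of trees is exactly the bookkeeping you anticipate. Your structural alternative via bar--cobar duality---reading generation in degree~$1$ off the fact that $B(\mathcal{E})\simeq A^\vee$ is concentrated in degree~$0$ with finite coradical filtration---is the cleaner route and is in the spirit of Keller's argument.
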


In particular, the lemma says that the following maps are essential to understand the $A_\infty$-structure on the $\Ext$-algebra:
\[m_n\colon (\Ext^1_A(L,L))^{\otimes n}\to \Ext^2_A(L,L)\]
Remembering that the arrows of the quiver, respectively the relations, of the algebra correspond to basis vectors of the dual space of $\Ext^1$, respectively $\Ext^2$, we dualise these maps and obtain maps
\[m_n^\#\colon \mathbb{D}\Ext^2_A(L,L)\to (\mathbb{D}(\Ext^1(L,L)))^{\otimes n},\]
using that standard duality commutes with taking tensor products as the spaces involved are finite-dimensional. Thus, these maps give us a way,  starting from an element of the dual of $\Ext^2$ to produce a linear combination of paths in the quiver. This is the promised construction of the quiver and relations of an algebra from the $\Ext$-algebra of the simples. 

\begin{theorem}[{\cite[Proposition 2]{K02}, cf. \cite[Claim 1.1]{Seg08}}]
The basic algebra $A$ is isomorphic to the quotient of the tensor algebra of $\mathbb{D}\Ext^1_A(L,L)$ over the semisimple algebra $L$ by the ideal generated by $\image\sum m_n^\#$. 
\end{theorem}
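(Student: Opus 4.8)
The plan is to reduce the statement to an explicit comparison between the relation ideal of $A$ and the image of the dualised higher multiplications, and then to carry out that comparison via Merkulov's construction. By Gabriel's structure theorem write $A\cong \Bbbk Q/I$ with $I$ admissible; by the first isomorphism of Lemma~\ref{ext-quiver} the $L$-bimodule $\mathbb{D}\Ext^1_A(L,L)$ is identified with the arrow span $Q_+/Q_+^2$, so that the tensor algebra $T_L(\mathbb{D}\Ext^1_A(L,L))$ \emph{is} the path algebra $\Bbbk Q$ and the canonical surjection $\pi\colon \Bbbk Q\twoheadrightarrow A$ has kernel $I$. Composing each $m_n^\#$ with the inclusion $(\mathbb{D}\Ext^1_A(L,L))^{\otimes n}\hookrightarrow \Bbbk Q$ and summing yields a linear map $\Phi\colon \mathbb{D}\Ext^2_A(L,L)\to \Bbbk Q$, and the theorem becomes the assertion that the two-sided ideal $\langle \image\Phi\rangle$ equals $I$. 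Since $I$ is admissible it contains $Q_+^N$ for $N\gg 0$ and is in particular finitely generated, and the second isomorphism of Lemma~\ref{ext-quiver} identifies $\mathbb{D}\Ext^2_A(L,L)$ with the minimal-generator space $I/(Q_+I+IQ_+)$. Hence, by the graded Nakayama lemma applied in $\Bbbk Q/Q_+^N$, it suffices to prove (i) $\image\Phi\subseteq I$, and (ii) the composition of $\Phi$ with the projection $I\twoheadrightarrow I/(Q_+I+IQ_+)$ coincides, under the isomorphism of Lemma~\ref{ext-quiver}, with the identity of $\mathbb{D}\Ext^2_A(L,L)$. By Kadeishvili's uniqueness theorem and functoriality under $A_\infty$-isomorphisms it is enough to check (i) and (ii) for one convenient minimal model, and I would use the one produced by Merkulov's construction.

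Next I would make this model concrete. Take the minimal projective resolution $P^\bullet\to L$, so that $P^0=A$, the indecomposable summands of $P^1$ are indexed by the arrows, and those of $P^2$ by a fixed minimal generating set $\{\rho_\ell\}$ of $I$; the differential $d^1$ is right multiplication by arrows, while $d^2$ is obtained by writing each $\rho_\ell=\sum_{p}c^{(\ell)}_{p}\,p$ as a combination of paths of length $\geq 2$ and peeling off one arrow from each path. Inside the dg algebra $\mathcal{D}=\End_A(P^\bullet)$ this fixes a basis of $\Ext^1_A(L,L)$ dual to the arrows, a basis $\{\rho_\ell^\ast\}$ of $\Ext^2_A(L,L)$ dual to the chosen generators, and --- through the choice of peeling --- the contracting homotopy $h$ together with the inclusion $i$ and projection $p$ needed for Merkulov's recursion; one has to carry the vertex-idempotent and $L$-bimodule structures through this choice so that $H^\ast(\mathcal{D})\cong\Ext^\ast_A(L,L)$ as $L$-bimodules. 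The role of Lemma~\ref{ext-1-generation} here is to guarantee that the only $A_\infty$-operations that can contribute relations are the $m_n\colon (\Ext^1_A(L,L))^{\otimes n}\to \Ext^2_A(L,L)$, which is precisely the data recorded by $\Phi$.

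The main step is to run Merkulov's recursion $\lambda_n=\sum_{r+s=n}(-1)^{r-1}\lambda_2(h\lambda_r\otimes h\lambda_s)$ on tensors of arrow-classes and to track which part of the relations is produced. I expect that, by induction on $n$,
\[
m_n^\#(\rho_\ell^\ast)=\sum_{p\ \text{of length }n}c^{(\ell)}_{p}\;\alpha^{(p)}_{n}\otimes\cdots\otimes\alpha^{(p)}_{1},\qquad p=\alpha^{(p)}_n\cdots\alpha^{(p)}_1,
\]
so that summing over $n$ gives $\Phi(\rho_\ell^\ast)=\rho_\ell$, which yields (i) and (ii) simultaneously. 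The mechanism is that each application of $\lambda_2$ multiplies in $\mathcal{D}$ by one further arrow of $A$, each intervening $h$ lifts the result one step up the resolution --- this is where the peeling choice enters, undoing $d^2$ and $d^1$ arrow by arrow --- and the final projection $p$ collapses the accumulated length-$n$ word to its coefficient in $\rho_\ell$ times $\rho_\ell^\ast$. I expect the main obstacle to be purely in the bookkeeping: reconciling the Koszul signs in the $\lambda_n$-recursion with those in the defining equations of an $A_\infty$-algebra, matching the composition order in $\End_A(P^\bullet)$ with the left-module path convention of Lemma~\ref{ext-quiver}, and verifying that $p\lambda_n i^{\otimes n}$ genuinely isolates the length-$n$ component of each relation and nothing more --- equivalently, that no contributions of the ``wrong'' length sneak in. Alternatively one could phrase the whole argument as a truncated instance of bar--cobar duality relative to $L$, bypassing the explicit $\lambda$-kernels, but the combinatorial computation above seems the most transparent.
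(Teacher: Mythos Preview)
The paper does not actually prove this theorem; it is quoted from \cite{K02} and \cite{Seg08} without argument. What the paper does provide is a sketch of the \emph{inverse} construction immediately afterwards: given a section $I/(Q_+I+IQ_+)\to Q_+$, its dual is a candidate for $\sum m_n$, and the subsequent (also only cited) theorem asserts that this candidate is realised by \emph{some} minimal $A_\infty$-structure on $\Ext^*_A(L,L)$. Taken together with Kadeishvili's uniqueness, that pair of statements would give a proof in the spirit of yours, but the paper stops short of spelling it out.

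Your approach via an explicit Merkulov model is the standard way to make this precise and is on the right track; it is essentially what the references \cite{LPWZ09} and \cite{KM22} carry out. Two comments. First, your appeal to ``functoriality under $A_\infty$-isomorphisms'' to reduce to one convenient model needs care: property (ii) as you state it --- that the composite $\mathbb{D}\Ext^2\to I\to I/(Q_+I+IQ_+)$ is the \emph{identity} --- is not invariant under $A_\infty$-isomorphism (the higher $f_n$ will perturb $\Phi$ by elements of $Q_+I+IQ_+$ and by an automorphism of $\mathbb{D}\Ext^2$). What \emph{is} invariant is that this composite is an isomorphism, and that already suffices for the Nakayama argument; you should rephrase (ii) accordingly. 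Second, there is a small notational slip: $m_n^\#$ has domain $\mathbb{D}\Ext^2_A(L,L)$, so you want $m_n^\#(\rho_\ell)$ (or $\overline{\rho_\ell}$), not $m_n^\#(\rho_\ell^\ast)$, since in your conventions $\rho_\ell^\ast\in\Ext^2_A(L,L)$. The substantive work you flag --- proving the inductive formula for $p\lambda_n i^{\otimes n}$ and controlling signs and composition conventions --- is exactly where the proof lives, and your description of the mechanism (each $\lambda_2$ contributes one arrow, each $h$ lifts along the resolution, the final $p$ reads off the coefficient in $\rho_\ell$) is correct.
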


This theorem sometimes goes under the name of \textbf{$A_\infty$-Koszul duality}. There is in fact an inverse to the above construction that we will sketch. Note that there are isomorphisms 
\[\bigoplus_{j=1}^\infty \Ext^1_A(L,L)^{\otimes j}\xrightarrow{\sim} \mathbb{D}Q_+\]
and 
\[\Ext^2_A(L,L)\cong \mathbb{D}(I/Q_+I+IQ_+).\]
Thus, the choice of a section, $I/(Q_+I+IQ_+)\to I\to Q_+$, in other words choosing representatives for the relations, yields a dual map 
\[\bigoplus_{j=1}^\infty \Ext^1_A(L,L)\to \Ext^2_A(L,L),\]
which is a candidate for the restriction of the $A_\infty$-structure to this particular degree. And this candidate is in fact part of an $A_\infty$-structure:

\begin{theorem}[{\cite[Proposition 2]{K02}}]
Each such map is the sum of $m_n$ for some choice of minimal model $A_\infty$-structure on $\Ext^*_A(L,L)$.
\end{theorem}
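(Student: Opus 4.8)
The plan is to realise every section by gauge-transforming a single minimal model. Fix a minimal projective resolution $P^\bullet$ of $L$ and put $\mathcal{D}=\End_A(P^\bullet)$, so that $H^*(\mathcal{D})\cong\Ext^*_A(L,L)=:\mathcal{E}$. By Kadeishvili's theorem (or Merkulov's construction recalled above) $\mathcal{D}$ has at least one minimal $A_\infty$-model $m^{(0)}$ on $\mathcal{E}$, and by the previous theorem its associated map $\Sigma^{(0)}:=\sum_n m^{(0)}_n|_{(\Ext^1)^{\otimes n}}$ corresponds under duality to a section $\sigma_0\colon I/(Q_+I+IQ_+)\to I\hookrightarrow Q_+$ of the canonical projection (that $\Sigma^{(0)\#}$ lands in $I$ is the content of the previous theorem; that it splits the projection follows because $m_2|_{(\Ext^1)^{\otimes 2}}$ is dual to the inclusion of minimal generators). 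Applying an $A_\infty$-isomorphism $f$ with $f_1=\id_{\mathcal{E}}$ to $m^{(0)}$ again produces a minimal model of $\mathcal{D}$, so it is enough to show that as $f$ varies over such isomorphisms the associated section sweeps out every section of $I$.

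The next step is to expand the $A_\infty$-morphism equations for an isomorphism $f\colon(\mathcal{E},m)\to(\mathcal{E},m')$ in the multigraded components with all inputs in $\Ext^1$ and output in $\Ext^2$. Since $\mathcal{E}$ is generated in degrees $0$ and $1$ (Lemma \ref{ext-1-generation}), only finitely many terms occur in each equation, involving just two families of components of the maps $f_n$, namely $(\Ext^1)^{\otimes n}\to\Ext^1$ and $(\Ext^1)^{\otimes r}\otimes\Ext^2\otimes(\Ext^1)^{\otimes t}\to\Ext^2$. The resulting recursion says: with $f_1=\id$ the quadratic part $m_2|_{(\Ext^1)^{\otimes 2}}$ is unchanged (consistently, the length-$2$ part of a section is intrinsic, since $I\subseteq Q_+^2$ while $Q_+I+IQ_+\subseteq Q_+^3$), while the components of $f_n$ enter the order-$(n+1)$ equation through a Hochschild-type coboundary $\partial_{m_2}(f_n)$; hence composing $m^{(0)}$ with a gauge transformation supported in tensor-degree $n$ changes $\sum_k(m_k)^\#$ only in path-lengths $\geq n+1$, and in path-length $n+1$ by exactly $\partial_{m_2}(f_n)^\#$. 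One then matches $\sigma$ to $\sigma_0$ by induction on path length: once agreement holds modulo $Q_+^{n+1}$, the discrepancy in path-length $n+1$ is the lowest-length homogeneous component of an element of $Q_+I+IQ_+$ (both maps being sections of $I$ agreeing in lower lengths), and one verifies that $\image\partial_{m_2}^\#$ realises precisely such components, so a suitable $f_n$ cancels the discrepancy without disturbing the lower lengths. Passing to the limit — a finite process in each path-length, as $I$ admissible gives $Q_+^N\subseteq I$ — yields the desired gauge transformation.

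The main obstacle is the verification at the heart of the induction: that the image of $\partial_{m_2}^\#$ in path-length $n+1$ is exactly the span of lowest-length parts of elements of $Q_+I+IQ_+$. This is an explicit but delicate computation, complicated by the Koszul signs in the $A_\infty$-morphism equations, by the need to use both families of components of $f_n$ above (the first producing the ``relation times path'' and ``path times relation'' contributions, the second the ``path times relation times path'' ones), and by the non-homogeneity of $I$ under the path-length grading, which forces one to work through the length filtration rather than a naive grading. A technically cleaner route, sidestepping the gauge bookkeeping, is to note that a section $\sigma$ is exactly the data of a model $P^\bullet_\sigma$ of the minimal projective resolution of $L$ — the identity $d_1d_2=0$ being the statement that $\sigma(\xi)$ maps to $0$ in $A$ — and to apply Merkulov's construction to $\End_A(P^\bullet_\sigma)$ with the tautological retraction; the obstacle then reappears as the claim that the contracting homotopy can be chosen so that the resulting $\sum_n m_n^\#$ reproduces $\sigma$ in every path-length.
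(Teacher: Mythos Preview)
The paper does not actually contain a proof of this theorem. Immediately after the statement it reads: ``Unfortunately, the given reference doesn't contain a proof. We refer the reader to \cite[Theorem A]{LPWZ09} or \cite[Theorem B]{KM22} for proofs in analogous situations.'' There is therefore nothing in the paper to compare your proposal against.

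That said, your strategy is the standard one and is in the spirit of the cited references: fix one minimal model, then act by the group of gauge transformations (the $A_\infty$-isomorphisms with $f_1=\id$) and show that the orbit on the induced sections $I/(Q_+I+IQ_+)\to Q_+$ is all of them, proceeding by induction on path length. You have correctly isolated the crux --- identifying the image of the Hochschild-type coboundary with the lowest-length parts of $Q_+I+IQ_+$ --- and you are honest that this step is not actually carried out. As written, then, your proposal is a sound outline rather than a proof: the induction step is asserted but not verified, and the sign and filtration bookkeeping you flag is genuinely where the work lies. Your alternative suggestion, running Merkulov's construction on $\End_A(P^\bullet_\sigma)$ for the resolution built from $\sigma$, is closer to how \cite{LPWZ09} argues and is probably the cleaner route to a complete proof; but again you would need to check, not just state, that the resulting $\sum_n m_n^\#$ recovers $\sigma$.
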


Unfortunately, the given reference doesn't contain a proof. We refer the reader to \cite[Theorem A]{LPWZ09} or \cite[Theorem B]{KM22} for proofs in analogous situations. We have seen that in general, the $A_\infty$-structure is necessary to reconstruct the algebra $A$ from the $\Ext$-algebra of the simples. There are however situations in which the algebra structure on $\Ext^*_A(L,L)$ suffices. Recall that an algebra $A=\Bbbk Q/I$ is called \textbf{quadratic} if the ideal $I$ is generated by linear combinations of paths of length two.  In particular, $I$ is homogeneous and thus $A$ is graded by path lengths. As for any graded algebra, the simples and the projective modules are gradable, see e.g. \cite{GG82}. In particular, the $\Ext$-algebra of the simples affords another grading (in addition to the homological grading), sometimes called the internal grading or the Adams grading. All the previously mentioned constructions have graded analogues such that all the maps, like the multiplications $m_n$, the $A_\infty$-morphisms $f_n$ and the $\lambda$-kernels $\lambda_n$, have internal degree $0$. In the case that $A$ is quadratic, the spaces $\Ext^1_A(L,L)$, resp. $\Ext^2_A(L,L)$ are concentrated in internal degree $1$ and $2$, respectively, see e.g. \cite[Corollary 5.3]{PP05}. The following is an immediate corollary:

\begin{lemma}\label{quadratic-a-infinity}
If $A$ is quadratic and the maps 
\[m_n\colon \Ext^1_A(L,L)^{\otimes n}\to \Ext^2_A(L,L)\]
are chosen so that they have internal degree $0$, then these maps vanish except possibly for $n=2$. 
\end{lemma}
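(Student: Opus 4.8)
The plan is to run a degree count against the internal (Adams) grading. First I would recall that, since $A=\Bbbk Q/I$ is quadratic, the ideal $I$ is homogeneous for the path-length grading on $\Bbbk Q$, so $A$ becomes a graded algebra with $A_0\cong L$; consequently $L$ and the indecomposable projectives are gradable, and $\Ext^*_A(L,L)$ carries an internal grading alongside its homological one. The maps $m_n$ referred to in the statement are, by hypothesis, the structure maps of a minimal model chosen to be homogeneous of internal degree $0$ (such a choice exists by the graded refinement of Merkulov's construction recalled above, i.e. the graded form of Kadeishvili's theorem).

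Next I would pin down the two internal degrees in play. Since $\mathbb{D}\Ext^1_A(L,L)\cong e_\mathtt{j}Q_+/Q_+^2 e_\mathtt{i}$ is spanned by the arrows of $Q$, each of path-length $1$, the space $\Ext^1_A(L,L)$ sits entirely in internal degree $1$. Because $A$ is quadratic, a minimal generating set of $I$ consists of elements of path-length $2$, so $\Ext^2_A(L,L)$ sits entirely in internal degree $2$; this is exactly \cite[Corollary 5.3]{PP05}. It follows that the $n$-fold tensor power $\Ext^1_A(L,L)^{\otimes n}$ is concentrated in internal degree $n$.

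Finally, applying $m_n$, which has internal degree $0$, to a space concentrated in internal degree $n$ produces something in the internal-degree-$n$ component of $\Ext^2_A(L,L)$. For $n\neq 2$ that component vanishes, forcing $m_n=0$; for $n=2$ no constraint is imposed, and $m_2$ is simply the Yoneda product. I do not expect any genuine obstacle here: the single point that needs care is the existence of a minimal model whose higher multiplications respect the internal grading, which is precisely the graded version of the Kadeishvili/Merkulov result recalled earlier, and once that is granted the lemma is pure bookkeeping with the two gradings.
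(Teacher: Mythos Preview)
Your argument is correct and matches the paper's own reasoning: the lemma is stated there as an immediate corollary of the fact that for quadratic $A$ the spaces $\Ext^1_A(L,L)$ and $\Ext^2_A(L,L)$ are concentrated in internal degrees $1$ and $2$, so an internally degree-$0$ map $m_n$ from degree $n$ to degree $2$ must vanish unless $n=2$. You have simply written out this degree count in full.
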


In particular, instead of taking the quotient by $\image\sum m_n^\#$ in $A_\infty$-Koszul duality, one can just take $\image m_2^\#$ to reconstruct the algebra from the $\Ext$-algebra of the simples (without needing its $A_\infty$-structure). A particularly nice case of this is when $A$ is a Koszul algebra, i.e. $\Ext^n_A(L,L)$ is concentrated in internal degree $n$ for all $n$.  

\begin{lemma}[{\cite[Proposition 1 a)]{K02}, \cite[Corollary V.0.6]{Con11}}]
The graded algebra $A$ is Koszul if and only if any $A_\infty$-structure on the graded algebra $\Ext^*_A(L,L)$ satisfies $m_n=0$ except possibly for $n=2$.
\end{lemma}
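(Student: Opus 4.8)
The plan is to play the two gradings on $E:=\Ext^*_A(L,L)$ against each other: the homological grading (the $*$) and the internal (Adams) grading inherited from the path-length grading on $A$. Koszulity is by definition the statement that $\Ext^n_A(L,L)$ is concentrated in internal degree $n$ for every $n$, and, since $A$ is generated in degree $1$, we have $\Ext^1_A(L,L)\cong\mathbb{D}(\rad A/\rad^2 A)$ concentrated in internal degree $1$ and $\Ext^n_A(L,L)$ in internal degrees $\ge n$ in general. Throughout I read ``an $A_\infty$-structure on the graded algebra $E$'' as one whose operations $m_n$ are homogeneous of internal degree $0$ (and, as always, of homological degree $2-n$); such minimal models exist by applying Merkulov's construction to the bigraded dg algebra $\End_A(P^*)$ for a minimal graded projective resolution $P^*$ of $L$, as in the discussion above, and any two of them differ by an internal-degree-$0$ $A_\infty$-isomorphism.

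For the forward direction, assume $A$ is Koszul. Given homogeneous classes $\xi_i\in\Ext^{d_i}_A(L,L)$, Koszulity forces each $\xi_i$ into internal degree $d_i$, so $\xi_1\otimes\dots\otimes\xi_n$ sits in internal degree $d_1+\dots+d_n$. Since $m_n$ preserves internal degree and has homological degree $2-n$, the class $m_n(\xi_1\otimes\dots\otimes\xi_n)$ lies in homological degree $d_1+\dots+d_n+2-n$ and internal degree $d_1+\dots+d_n$; but in that homological degree Koszulity only allows internal degree $d_1+\dots+d_n+2-n$, so the class vanishes unless $n=2$. As such tuples span $E^{\otimes n}$, we get $m_n=0$ for all $n\neq 2$.

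For the converse, assume $m_n=0$ for all $n\neq 2$, so that $E$ is an honest bigraded associative algebra with product $m_2$ of internal degree $0$. By Lemma~\ref{ext-1-generation}, $E$ is generated over $E^0\cong L$ by $E^1=\Ext^1_A(L,L)$ as an $A_\infty$-algebra, and since all operations except $m_2$ vanish this just says $E$ is generated by $\Ext^1_A(L,L)$ as an ordinary algebra. Each element of $\Ext^1_A(L,L)$ has internal degree $1$, and $m_2$ has internal degree $0$, so any product of $n$ of them has internal degree $n$; as these products span $\Ext^n_A(L,L)$, we conclude $\Ext^n_A(L,L)$ is concentrated in internal degree $n$ for every $n$, i.e.\ $A$ is Koszul.

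The substantive input, and the step I expect to be the main obstacle, is hidden in the converse: without Lemma~\ref{ext-1-generation} the vanishing of the higher $m_n$ only yields, via $A_\infty$-Koszul duality, that the ideal $\image\sum m_n^\#=\image m_2^\#$ lives in tensor-degree $2$, hence merely that $A$ is quadratic, which is strictly weaker than Koszulity; it is precisely Keller's degree-$1$ generation result that upgrades quadraticity to Koszulity here. One should also keep in mind that the internal-degree restriction in the reading of ``any $A_\infty$-structure'' is essential: for a Koszul $A$ there do exist minimal $A_\infty$-structures on $E$ with the correct $m_2$ but nonzero higher operations, obtained by transporting the trivial structure along an $A_\infty$-isomorphism that shifts internal degree, so the statement genuinely concerns $A_\infty$-structures compatible with the grading.
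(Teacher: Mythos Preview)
The paper does not supply a proof of this lemma; it is stated with citations to \cite{K02} and \cite{Con11} and no argument beyond the surrounding remarks on the internal grading. Your proof is correct and is precisely the argument one would extract from those remarks. The forward direction is the same degree-counting as in Lemma~\ref{quadratic-a-infinity}, now applied in every homological degree rather than just degrees~$1$ and~$2$; this is exactly what the paper's comment after the lemma (``for Koszul algebras, $m_n=0$ for $n\neq 2$ for all inputs, while for quadratic algebras\dots'') is pointing at.

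For the converse you correctly isolate Lemma~\ref{ext-1-generation} as the substantive step. One small point worth making explicit: Lemma~\ref{ext-1-generation} is stated for \emph{the} $A_\infty$-algebra $\Ext^*_A(L,L)$, i.e.\ for any choice of minimal model, so it does apply to the particular internal-degree-$0$ model you have assumed to be formal. Since for minimal $A_\infty$-algebras an $A_\infty$-isomorphism has $f_1$ a graded algebra isomorphism with respect to $m_2$, the generation statement transfers; alternatively, Keller's argument works for any minimal model directly. With $m_n=0$ for $n\neq 2$, $A_\infty$-generation by $\Ext^1$ collapses to ordinary algebra generation, and your internal-degree count finishes the job. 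Your closing remarks on why the internal-degree-$0$ hypothesis is essential, and why without Lemma~\ref{ext-1-generation} one only recovers quadraticity, are accurate and match the paper's distinction between the quadratic and Koszul cases.
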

 
Note that the difference between quadratic and Koszul in terms of the $A_\infty$-structure is that for Koszul algebras, $m_n=0$ for $n\neq 2$ for all inputs, while for quadratic algebras, $m_n=0$ for $n\neq 2$ when restricted to $\Ext^1_A(L,L)^{\otimes n}$. 

Although the choice of presentation determines the restriction of an $A_\infty$-structure to $\Ext^1_A(L,L)$, and thus are quite well understood, in general there are very few algebras for which the $A_\infty$-structure on the full $\Ext^*_A(L,L)$ is known. A larger class for which the computation has been done are the aforementioned monomial algebras, see \cite[Theorem 4.7]{Tam21}. As an application of this computation, Dotsenko, Gelinas, and Tamaroff were able to prove that a monomial algebra satisfies the (Fg)-condition if and only if it is Gorenstein, see \cite{DGT23}. Thus, a challenge for the community would be as follows.

\begin{question}
For well-understood classes of finite-dimensional algebras such as (special) biserial algebras, give a combinatorial description of the $A_\infty$-structure on the $\Ext$-algebra of the simple modules.  
\end{question}

The reader who wants to learn more about $A_\infty$-algebras and their use in representation theory is referred to Keller's survey article for the ICRA 2000 in \cite{K02}. Similar surveys, covering slightly different aspects are \cite{K01}, \cite{K06}, and \cite{LPWZ04}. For a more extensive introduction to the theory of $A_\infty$-algebras, we recommend \cite{Boc21} and \cite{Sei08}. The view-point of non-commutative deformation theory on this matter, which is not covered in this survey can be found in \cite{Seg08} and \cite{Boo22}. As usual, master and PhD theses can provide a great source of exposition containing more details than other sources, we refer the reader for example to \cite{L-H03}, \cite{Con11}, and \cite{Wal21}.

\section{Exact categories}\label{exact-cats}

Sometimes in studying abelian categories one is interested in particular extension-closed subcategories -- either because they contain all the information one is interested in or because the entire abelian category is too complicated to fully understand. Examples include categories of vector bundles, categories of projective or Gorenstein projective modules, monomorphism categories, modules of constant Jordan type, or categories of filtered modules for quasi-hereditary algebras. In the context of algebraic $K$-theory, Quillen introduced in \cite{Qui73} the notion of an exact category to abstract properties of such categories. For an introduction to exact categories, we refer the reader to \cite{Bue10} and \cite{FS10}.

\begin{definition}
An \textbf{exact structure} on an additive category $\mathcal{C}$ consists of a choice of kernel-cokernel pairs, called \textbf{conflations}, where the kernels are called \textbf{inflations} and the cokernels are called \textbf{deflations}, such that the following axioms hold:
\begin{enumerate}
\item[(Ex0)] The class of conflations is closed under isomorphisms and contains the split exact sequences.
\item[(Ex1)] Compositions of deflations are deflations.
\item[(Ex$1^{\op}$)] Compositions of inflations are inflations.
\item[(Ex2)] Pullbacks of deflations along arbitrary morphisms exist and are again deflations.
\item[(Ex$2^{\op}$)] Pushouts of inflations along arbitrary morphisms exist and are again inflations. 
\end{enumerate}
The additive category together with its exact structure is called an \textbf{exact category}.
\end{definition}

The Gabriel--Quillen embedding theorem shows that every (small) exact category can be embedded as a full extension-closed subcategory in an abelian category. In practice, many of the exact categories appearing in representation theory satisfy additional properties. For example, they are closed under direct summands. Often they also have enough projectives in the sense that all objects admit a deflation from a projective objects. Here projective objects can be defined as those objects such that every deflations into it splits. Instances of exact categories with enough projectives are provided by the resolving subcategories of module categories in the following sense:

\begin{definition}
Let $\mathcal{C}$ be an exact category with enough projectives. A full subcategory $\mathcal{F}\subseteq \mathcal{C}$ is called a \textbf{resolving subcategory} of $\mathcal{C}$ if $\mathcal{F}$ is closed under summands, it is extension-closed and closed under kernels of epimorphisms, and it contains the projective objects of $\mathcal{C}$. 
\end{definition}

Examples of such categories include the previously mentioned categories of Gorenstein projective modules and the category of filtered modules for quasi-hereditary algebras. In fact, Enomoto used the Yoneda embedding to prove that exact categories with enough projectives generally arise as resolving subcategories:

\begin{proposition}[{\cite[Proposition 2.8]{Eno17}}]
Let $\mathcal{C}$ be an be an essentially small idempotent-complete exact category with enough projectives $\mathcal{P}$. Then $\mathcal{C}$ is a resolving subcategory of the category $\modu_\infty \mathcal{P}$ of $\mathcal{P}$-modules with an infinite resolution by representable functors. 
\end{proposition}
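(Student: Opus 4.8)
The plan is to analyse the restricted Yoneda functor $h\colon \mathcal{C}\to \modu_\infty\mathcal{P}$ sending $C$ to $\mathcal{C}(-,C)|_\mathcal{P}$, and to verify in turn that it is well defined (lands in $\modu_\infty\mathcal{P}$), fully faithful, exact, conflation-reflecting, and has a resolving essential image. The starting observation is that, since $\mathcal{C}$ has enough projectives, each $C$ admits a \emph{projective presentation}: a deflation $P_0\twoheadrightarrow C$ with $P_0\in\mathcal{P}$, its kernel $K_0\in\mathcal{C}$, and a deflation $P_1\twoheadrightarrow K_0$. Because every $\mathcal{C}(P,-)$ with $P\in\mathcal{P}$ is exact on conflations, applying $h$ turns $P_1\to P_0\to C$ into an exact sequence $h_{P_1}\to h_{P_0}\to h_C\to 0$ of $\mathcal{P}$-modules; iterating on $K_0$ and its higher syzygies splices these into an infinite resolution of $h_C$ by representable functors, so $h$ indeed takes values in $\modu_\infty\mathcal{P}$. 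Full faithfulness then drops out of the Yoneda lemma: applying $\Hom(-,h_D)$ to the presentation and $\mathcal{C}(-,D)$ to the two defining conflations both identify $\mathcal{C}(C,D)$ and $\Hom(h_C,h_D)$ with the kernel of $\mathcal{C}(P_0,D)\to\mathcal{C}(P_1,D)$, and one checks the resulting isomorphism is the one induced by $h$.

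Next I would establish that $h$ is exact and conflation-reflecting. Exactness is again pointwise: $\mathcal{C}(P,-)$ is exact on conflations for $P\in\mathcal{P}$, and exactness in the functor category is detected on objects of $\mathcal{P}$. For the converse, suppose $X\to Y\to Z$ in $\mathcal{C}$ is sent by $h$ to a short exact sequence. Lifting a deflation $P\twoheadrightarrow Z$ of $\mathcal{C}$ along the epimorphism $h_Y\to h_Z$ (using projectivity of $h_P$, which is the Yoneda lemma) produces a morphism $P\to Y$ whose composite with $Y\to Z$ is a deflation; by the standard exact-category fact that the second factor of a deflation is a deflation (see \cite{Bue10}), $Y\to Z$ is a deflation, and comparing its kernel conflation with the given sequence via full faithfulness shows the latter is a conflation. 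The same lifting trick, applied to a deflation $h_Y\twoheadrightarrow h_Z$ of $\modu_\infty\mathcal{P}$ with $Y,Z\in\mathcal{C}$, shows that $Y\to Z$ is a deflation in $\mathcal{C}$ and hence that its kernel is again in the image of $h$; so the image is closed under kernels of deflations.

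It then remains to treat projectives, summands, and extensions. The projectives of $\modu_\infty\mathcal{P}$ are summands of the representables $h_P$ with $P\in\mathcal{P}$, and since $\mathcal{C}$ is idempotent complete and $\mathcal{P}$ closed under summands these equal $h_{P'}$ with $P'\in\mathcal{P}$; the same idempotent-splitting argument, applied to an idempotent of some $h_C$ (which comes from an idempotent of $C$ by full faithfulness), gives closure under summands. For an extension $0\to h_X\to G\to h_Z\to 0$ in $\modu_\infty\mathcal{P}$ with $X,Z\in\mathcal{C}$, I would lift a deflation $P_0\twoheadrightarrow Z$ of $\mathcal{C}$ along $G\to h_Z$, restrict the lift to the syzygy $h_K$ with $K=\ker(P_0\to Z)\in\mathcal{C}$ to obtain a morphism $K\to X$ in $\mathcal{C}$, form in $\mathcal{C}$ the pushout $Y$ of the inflation $K\rightarrowtail P_0$ along $K\to X$, and observe that $X\rightarrowtail Y\twoheadrightarrow Z$ is a conflation with $h_Y$ the pushout of $h_K\rightarrowtail h_{P_0}$ along $h_K\to h_X$ (since $h$ is exact), which maps to $G$ by a morphism of extensions that is an isomorphism by the five lemma. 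Hence $G\cong h_Y$ with $Y\in\mathcal{C}$.

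The main obstacle I expect is not a single computation but the coordination in these last two steps: realising an abstract extension or a kernel of $\modu_\infty\mathcal{P}$ by an honest object of $\mathcal{C}$ genuinely consumes all the hypotheses — essential smallness so that $\modu_\infty\mathcal{P}$ exists, enough projectives to run the lifting arguments, idempotent completeness to split off summands — together with the slightly technical structural lemmas for exact categories of \cite{Bue10}. A further point that has to be addressed before the statement even makes sense is that $\modu_\infty\mathcal{P}$ is itself an exact category with enough projectives, i.e.\ that it is extension-closed in $\modu\mathcal{P}$, closed under kernels of deflations, and that the relevant syzygies stay finitely presented, with the representables serving as projective generators.
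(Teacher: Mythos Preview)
The paper does not prove this proposition; it is quoted as \cite[Proposition 2.8]{Eno17} and immediately followed by a remark, with no argument given. So there is no ``paper's own proof'' to compare against.

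That said, your outline is essentially the standard route to Enomoto's result and is correct in its architecture: restricted Yoneda, full faithfulness from a projective presentation, exactness pointwise, and then the verification that the essential image is resolving. A couple of comments on points you flag yourself. First, the step ``the second factor of a deflation is a deflation'' is not an axiom but a consequence of weak idempotent completeness (B\"uhler, Proposition~7.6); since you assume $\mathcal{C}$ idempotent complete this is fine, but it is worth naming the hypothesis that is being spent there. Second, your closing paragraph is right that one must check $\modu_\infty\mathcal{P}$ is itself exact with enough projectives before the word ``resolving'' makes sense; this is done via the horseshoe lemma for $\mathcal{P}$-resolutions and is where essential smallness and the closure of $\modu_\infty\mathcal{P}$ under kernels of epimorphisms between objects admitting such resolutions are used. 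With those two points made explicit, your sketch would serve as a complete proof.
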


Note that in general $\modu_\infty \mathcal{P}$ is not an abelian category. However, under the mild assumption that $\mathcal{P}$ has weak kernels, it is and it coincides with the category of finitely presented $\mathcal{P}$-modules. 

The most important example of a resolving subcategory for our purposes is the category of modules with a standard filtration for a quasi-hereditary algebra. To define such an algebra $A$, one has to fix a partial order $\leq$ on the set of isomorphism classes of simple $A$-modules. With respect to this partial order, one defines the $\mathtt{i}$-th \textbf{standard module} as the maximal highest weight quotient module of an indecomposable projective

\[\Delta_\mathtt{i}=P_\mathtt{i}/\sum_{\substack{f\colon P_\mathtt{j}\to P_\mathtt{i}\\\mathtt{j}\nleq i}} \image f.\]

An algebra is then called \textbf{quasi-hereditary} if $\Delta_\mathtt{i}$ is a skew field and the algebra $A$ belongs to the full subcategory 

\[\mathcal{F}(\Delta)=\{M\in \modu A\,|\,\exists 0=M_0\subset M_1\subset \dots\subset M_t, M_j/M_{j-1}\cong \Delta_{\mathtt{i}_j}\]
of modules admitting a filtration by standard modules. 

Another example of a resolving subcategory of a module category is the monomorphism category, that is the category of modules over $A\otimes_\Bbbk \Bbbk Q$ for a finite acyclic quiver $Q$ and a finite-dimensional algebra $A$. It consists of those modules for which the map
\[M_{\mathtt{i}, \operatorname{in}}\colon \bigoplus_{\substack{\alpha\\t(\alpha)=\mathtt{i}}} M_{s(\alpha)}\xrightarrow{(M_\alpha)} M_\mathtt{i}\]
is a monomorphism for all $\mathtt{i}$. We invite the reader to have a look at Kvamme's survey in these proceedings. As a yet unpublished part of our ongoing joint work with Gao, Kvamme, and Psaroudakis, which so far lead to the two articles \cite{GKKP22, GKKP23}, we proved that like the category of filtered modules over a quasi-hereditary algebra, this category can be described as a filtration closure.

\begin{proposition}
Let $A$ be a finite-dimensional algebra and let $Q$ be a finite acyclic quiver. Then the monomorphism category is equal to the extension closure of the modules $(A\otimes_\Bbbk \Bbbk Q) \otimes_{A\otimes_\Bbbk \Bbbk Q_0} L_\mathtt{i}$, where $L$ runs through the simple $A$-modules and $L_\mathtt{i}$ denotes the simple $A\otimes_\Bbbk \Bbbk Q_0$-module concentrated at the vertex $\mathtt{i}\in Q_0$.  
\end{proposition}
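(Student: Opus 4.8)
Throughout write $\mathcal{M}$ for the monomorphism category, regarded as the full subcategory of the representations $M=(M_\mathtt{k},M_\alpha)$ of $Q$ in $\modu A$ whose incoming maps $M_{\mathtt{k},\operatorname{in}}$ are all monomorphisms, and abbreviate $P_{L,\mathtt{i}}:=(A\otimes_\Bbbk\Bbbk Q)\otimes_{A\otimes_\Bbbk\Bbbk Q_0}L_\mathtt{i}$; concretely $P_{L,\mathtt{i}}$ has value $\bigoplus_{p\colon\mathtt{i}\rightsquigarrow\mathtt{j}}L$ at the vertex $\mathtt{j}$, its structure maps prolonging paths, and it is finite-dimensional since $Q$ is acyclic. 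Let $\mathcal{F}$ denote the extension closure of all $P_{L,\mathtt{i}}$ with $L$ a simple $A$-module and $\mathtt{i}\in Q_0$. The plan is to prove the inclusions $\mathcal{F}\subseteq\mathcal{M}$ and $\mathcal{M}\subseteq\mathcal{F}$ separately. For the first, I would observe that the incoming map of $P_{L,\mathtt{i}}$ at $\mathtt{j}$ is the inclusion of the summands indexed by the paths of positive length, a split monomorphism, because writing such a path as (last arrow) followed by (shorter path) is a bijection; and an extension of two objects of $\mathcal{M}$ lies in $\mathcal{M}$ by the obvious chase in the three-by-three diagram obtained from evaluating the incoming-map square at each vertex. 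Hence $\mathcal{F}\subseteq\mathcal{M}$.

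For the inclusion $\mathcal{M}\subseteq\mathcal{F}$ I would argue by induction, an outer induction on $|Q_0|$ and an inner induction on $\dim_\Bbbk M$. Let $0\neq M\in\mathcal{M}$ and choose a source $\mathtt{i}$ of $Q$, which exists as $Q$ is finite and acyclic. If $M_\mathtt{i}=0$, then $M$ is supported on the full subquiver $Q'=Q\setminus\{\mathtt{i}\}$; it still lies in the monomorphism category of $Q'$, because the discarded arrows start at $\mathtt{i}$ and so contribute nothing to any incoming map, and the generators $P_{L,\mathtt{j}}$ with $\mathtt{j}\neq\mathtt{i}$ are unchanged on passing from $Q$ to $Q'$ since no path can enter the source $\mathtt{i}$; so the outer induction disposes of this case. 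Assume then $M_\mathtt{i}\neq 0$, and let $\phi\colon P_{M_\mathtt{i},\mathtt{i}}:=(A\otimes_\Bbbk\Bbbk Q)\otimes_{A\otimes_\Bbbk\Bbbk Q_0}(M_\mathtt{i})_\mathtt{i}\to M$ be the morphism adjoint to $\operatorname{id}_{M_\mathtt{i}}$, where $(M_\mathtt{i})_\mathtt{i}$ denotes $M_\mathtt{i}$ placed at the vertex $\mathtt{i}$.

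The next step is to show that $\phi$ is a monomorphism, by induction along a topological ordering of $Q_0$. At a vertex $\mathtt{j}\neq\mathtt{i}$ the $\mathtt{j}$-component of $P_{M_\mathtt{i},\mathtt{i}}$ is canonically the direct sum over the arrows $\mathtt{l}\to\mathtt{j}$ of the $\mathtt{l}$-components of $P_{M_\mathtt{i},\mathtt{i}}$, and under this identification $\phi_\mathtt{j}$ is $M_{\mathtt{j},\operatorname{in}}$ composed with $\bigoplus_{\mathtt{l}\to\mathtt{j}}\phi_\mathtt{l}$; since $M_{\mathtt{j},\operatorname{in}}$ is a monomorphism, injectivity at $\mathtt{j}$ follows from injectivity at the predecessors, the case $\mathtt{j}=\mathtt{i}$ being trivial since $\phi_\mathtt{i}=\operatorname{id}$. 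This is the one place in the existence proof where the defining monomorphism property of $M$ is really used. As $A\otimes_\Bbbk\Bbbk Q$ is free as a right $A\otimes_\Bbbk\Bbbk Q_0$-module, the induction functor is exact, so $P_{M_\mathtt{i},\mathtt{i}}\cong\operatorname{image}\phi$ is filtered by the $P_{L,\mathtt{i}}$ with $L$ ranging over the composition factors of $M_\mathtt{i}$; in particular $\operatorname{image}\phi\in\mathcal{F}$.

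It then remains to check that $M'':=\operatorname{coker}\phi$ again lies in $\mathcal{M}$, and I expect this to be the main obstacle. Granting it, $\dim_\Bbbk M''<\dim_\Bbbk M$ because $M_\mathtt{i}\neq 0$, so $M''\in\mathcal{F}$ by the inner induction, and $M$, an extension of $M''$ by $\operatorname{image}\phi$, lies in $\mathcal{F}$. To verify $M''\in\mathcal{M}$ I would apply the snake lemma to the diagram at a vertex $\mathtt{j}$ whose rows are the incoming short exact sequences of $0\to\operatorname{image}\phi\to M\to M''\to0$ and whose verticals are the incoming maps: since the first two verticals are monomorphisms, the kernel of the incoming map of $M''$ at $\mathtt{j}$ embeds into the cokernel of the incoming map of $\operatorname{image}\phi$ at $\mathtt{j}$. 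For $\mathtt{j}\neq\mathtt{i}$ every path $\mathtt{i}\rightsquigarrow\mathtt{j}$ has positive length, so the incoming map of $P_{M_\mathtt{i},\mathtt{i}}$ at $\mathtt{j}$ is even an isomorphism — this is precisely the absence of ``relations in the $Q$-direction'' in the generators — and that cokernel vanishes; and at $\mathtt{j}=\mathtt{i}$ one has $M''_\mathtt{i}=0$ while the source of the incoming map of $M''$ at $\mathtt{i}$ is already zero, $\mathtt{i}$ being a source. Hence all incoming maps of $M''$ are monomorphisms, completing both inductions. The only further point requiring care is interlocking the two layers of induction, so that the reduction to $Q'$ and the reduction of $\dim_\Bbbk M$ cooperate without circularity.
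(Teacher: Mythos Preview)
The paper does not include a proof of this proposition---it is announced there as a result from as yet unpublished joint work with Gao, Kvamme, and Psaroudakis---so there is no proof in the paper to compare against. That said, your argument is correct. The two inclusions are handled cleanly: the first via the explicit description of the incoming maps of the generators together with extension-closedness of $\mathcal{M}$, and the second via the adjunction map $\phi$, whose injectivity is indeed exactly where the monomorphism hypothesis on $M$ enters. The snake-lemma verification that $M''=\operatorname{coker}\phi$ remains in $\mathcal{M}$ is the only delicate point, and your observation that the incoming map of $P_{M_\mathtt{i},\mathtt{i}}$ at each $\mathtt{j}\neq\mathtt{i}$ is an \emph{isomorphism}, not merely a monomorphism, is precisely what makes it go through.

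One harmless simplification: the inner induction on $\dim_\Bbbk M$ is not really needed. After a single application of the step $M\mapsto M''$ one already has $M''_\mathtt{i}=0$, so the case ``$M_\mathtt{i}=0$'', and hence the outer induction on $|Q_0|$, applies directly to $M''$. A single induction on $|Q_0|$ therefore suffices, and your worry about interlocking the two layers disappears.
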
  

In general, an old result due to Auslander and Reiten shows that many resolving subcategories arise in this way.

\begin{theorem}[{\cite[Proposition 3.8]{AR91}}]
Let $A$ be a finite-dimensional algebra and let $\mathcal{F}$ be a contravariantly finite resolving subcategory of $\modu A$. Then $\mathcal{F}$ is equal to the idempotent-closure of the extension-closure of the right approximations of the simple $A$-modules. 
\end{theorem}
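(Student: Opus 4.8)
We write $\mathcal{G}$ for the subcategory in the statement, that is $\mathcal{G}=\add\operatorname{Filt}(\{X_S\mid S\text{ simple}\})$ with $X_S\to S$ a minimal right $\mathcal{F}$-approximation. The inclusion $\mathcal{G}\subseteq\mathcal{F}$ is immediate: each $X_S$ lies in $\mathcal{F}$, and $\mathcal{F}$, being resolving, is closed under extensions and direct summands. For the other inclusion the plan is to prove the more supple statement that \emph{for every $A$-module $M$ its minimal right $\mathcal{F}$-approximation $X_M$ lies in $\mathcal{G}$}; this suffices, because if $M\in\mathcal{F}$ then $\id_M$ is already a right $\mathcal{F}$-approximation, and being right minimal it is the minimal one, so $M=X_M\in\mathcal{G}$.

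I would first record three facts. (i) Since $\mathcal{F}$ contains the projectives and every module is a quotient of a projective, any right $\mathcal{F}$-approximation is an epimorphism; write $0\to Y_M\to X_M\to M\to 0$ for the minimal one. (ii) By Wakamatsu's lemma (valid because $\mathcal{F}$ is closed under extensions) one has $\Ext^1_A(F,Y_M)=0$ for all $F\in\mathcal{F}$; and as $\mathcal{F}$ is closed under kernels of deflations, hence under syzygies, dimension shifting turns this into $\Ext^i_A(F,Y_M)=0$ for all $i\geq 1$. Writing $\mathcal{Y}$ for the class of modules $Y$ with $\Ext^1_A(F,Y)=0$ for all $F\in\mathcal{F}$, we thus have $Y_M\in\mathcal{Y}$ and $\Ext^i_A(\mathcal{F},\mathcal{Y})=0$ for all $i\geq 1$. (iii) $\mathcal{G}$ is extension closed: given $0\to A'\to B'\to C'\to 0$ with $A',C'\in\mathcal{G}$, adding the identity sequences of complements that turn $A'$ and $C'$ into $\operatorname{Filt}(\{X_S\})$-objects and splicing filtrations exhibits $B'$, up to a direct summand, as $\operatorname{Filt}(\{X_S\})$-filtered, so $B'\in\mathcal{G}$.

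The core is an induction on the composition length $\ell(M)$. For $\ell(M)\leq 1$ there is nothing to prove: $X_0=0\in\mathcal{G}$, and if $M=S$ is simple then $X_M=X_S$ is one of the generators of $\mathcal{G}$. Assume $\ell(M)\geq 2$ and choose a simple quotient $\pi\colon M\twoheadrightarrow S$ with kernel $N$, so $\ell(N)=\ell(M)-1$ and $\ell(S)=1$, whence $X_N,X_S\in\mathcal{G}$ by induction. Form the pullback $Q:=X_S\times_S M$ of $X_S\to S$ along $\pi$; it yields short exact sequences $0\to Y_S\to Q\to M\to 0$ and $0\to N\to Q\to X_S\to 0$, where $Y_S=\ker(X_S\to S)$. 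Using $\Ext^2_A(X_S,Y_N)=0$, the long exact sequence of $0\to Y_N\to X_N\to N\to 0$ shows $\Ext^1_A(X_S,X_N)\to\Ext^1_A(X_S,N)$ to be surjective, so the class of $0\to N\to Q\to X_S\to 0$ lifts along $X_N\to N$; this produces a short exact sequence $0\to X_N\to X'\to X_S\to 0$ together with an epimorphism $X'\twoheadrightarrow Q$ with kernel $Y_N$. Composing with $Q\twoheadrightarrow M$ gives an epimorphism $\varphi\colon X'\twoheadrightarrow M$ whose kernel $Y'$ is an extension of $Y_S$ by $Y_N$. Hence $X'\in\mathcal{F}$ and $Y'\in\mathcal{Y}$, both classes being extension closed; and $\varphi$ is a right $\mathcal{F}$-approximation of $M$, because for $F\in\mathcal{F}$ the sequence $\Hom_A(F,X')\to\Hom_A(F,M)\to\Ext^1_A(F,Y')=0$ is exact. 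Therefore $X_M$ is a direct summand of $X'$; and $X'$, being an extension of $X_S$ by $X_N$ with both in $\mathcal{G}$, lies in $\mathcal{G}$ by (iii). Thus $X_M\in\mathcal{G}$, which closes the induction and proves $\mathcal{F}=\mathcal{G}$.

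The only step that is not purely formal is the lifting in the inductive step, and its whole content is the vanishing $\Ext^2_A(X_S,Y_N)=0$. This is precisely where the resolving hypothesis is used in an essential way: it upgrades the single $\Ext^1$-vanishing supplied by Wakamatsu's lemma to vanishing in degree two, which is what permits replacing the subobject $N$ — which itself need not lie in $\mathcal{F}$ — by its $\mathcal{F}$-approximation \emph{inside} a module filtered by approximations of simples. If one only assumes $\mathcal{F}$ contravariantly finite and closed under extensions and summands, then for $M\in\mathcal{F}$ the pullback above still splits off $Y_S$, giving $0\to N\to M\oplus Y_S\to X_S\to 0$, but in general there is no way to carry out the replacement of $N$, and the argument stalls there.
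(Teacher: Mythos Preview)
The paper does not give its own proof of this statement; it is simply quoted from \cite[Proposition 3.8]{AR91}. Your argument is correct and is essentially the original Auslander--Reiten proof: induction on composition length, with Wakamatsu's lemma providing $\Ext^1(\mathcal{F},Y_M)=0$ and the resolving hypothesis upgrading this to $\Ext^2$-vanishing, which is exactly what is needed to lift the extension $0\to N\to Q\to X_S\to 0$ along $X_N\twoheadrightarrow N$. Your closing remark isolating the role of the $\Ext^2$-vanishing is apt and matches the spirit of the original.

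Two minor comments. First, in the inductive step you say ``$X_N,X_S\in\mathcal{G}$ by induction''; of course $X_S\in\mathcal{G}$ is by definition, only $X_N\in\mathcal{G}$ uses the hypothesis. Second, the statement in the paper speaks of ``right approximations'' rather than minimal ones, but since any right approximation has the minimal one as a summand and we pass to the idempotent closure anyway, your choice to work with minimal approximations loses nothing.
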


Note that even if a resolving subcategory is of the form $\mathcal{F}(\Delta)$ for some collection of modules $\Delta$, this does not mean that these modules are the right approximations of the simple modules. 

\begin{example}
Let $A=\Bbbk(\mathtt{1}\to \mathtt{2}\leftarrow \mathtt{3})$. Then the resolving subcategory 
\[\mathcal{F}=\{M\,|\,L(\mathtt{3})\text{ is not a direct summand of } M\}\] 
is equal to $\mathcal{F}(\Delta)$ where the partial order on $\{\mathtt{1},\mathtt{2},\mathtt{3}\}$ is the natural order. However, the right approximation of $L(\mathtt{3})$ is the indecomposable injective $I(\mathtt{2})$ which is not isomorphic to the standard module corresponding to $\mathtt{3}$, which instead is the indecomposable projective  module $P(\mathtt{3})$. 
\end{example}

Nevertheless, the standard modules $\Delta_\mathtt{i}$ for a quasi-hereditary algebra can be characterised categorically inside $\mathcal{F}(\Delta)$. They are those objects such that for every non-zero proper inclusion $M\hookrightarrow \Delta_\mathtt{i}$, where $M\in \mathcal{F}(\Delta)$, the quotient lies outside $\mathcal{F}(\Delta)$. More generally, the notion of a simple object in an exact category has been studied recently by different authors:    

\begin{definition}[{\cite[Definition 3.3]{BHLR20}, \cite[Definition 2.6]{Eno22}}]
Let $\mathcal{E}$ be an exact category. An object $X\in \mathcal{E}$ is called \textbf{simple} (with respect to the exact structure) if for every inflation $Y\rightarrowtail X$ either $Y=0$ or $Y\cong X$.  
\end{definition}

However, this notion of simplicity is not as nicely behaved as for abelian categories as the following example shows.

\begin{example}
Let $A$ be the path algebra of the quiver $\mathtt{1}\to \mathtt{2}\leftarrow \mathtt{3}$. Let $\mathcal{E}$ be the exact subcategory $\add(A\oplus I(\mathtt{2}))$ with the natural exact structure. Then an object in $\mathcal{E}$ is simple if and only if it is indecomposable. However, the exact structure is not split and the simple objects $P(\mathtt{1})$ and $P(\mathtt{3})$ are in the idempotent-closure of the other two simple objects. 
\end{example}

\begin{definition}[{\cite[(2.2)]{Eno22}}]
Let $\mathcal{C}$ be an exact category.
\begin{itemize}
\item An \textbf{inflation series} of $X\in \mathcal{C}$ is a sequence $0=X_0\rightarrowtail X_1\rightarrowtail X_2\rightarrowtail \dots X_t=X$ such that none of the maps are isomorphisms. The number $t$ is called its \textbf{length}.
\item An inflation series is called a \textbf{composition series} if all of its subquotients $X_{j+1}/X_j$ are simple (relative to the exact structure). 
\item The category $\mathcal{C}$ is called a \textbf{length exact category} if there is an upper bound on the lengths of inflation series in $\mathcal{C}$. 
\item A length exact category is said to satisfy the \textbf{Jordan--H\"older property} if for all $X$, all composition series of $X$ have the same length and isomorphic composition factors. 
\end{itemize}
\end{definition}

If $\mathcal{C}$ is the category of finite-dimensional modules for a finite-dimensional algebra with the natural exact structure, then the simple objects in $\mathcal{C}$ are the simple modules and $\mathcal{C}$ is a length exact category satisfying the Jordan--H\"older property by the Jordan--H\"older theorem. Similarly, if $\mathcal{C}$ is the subcategory of finitely generated projective modules with its split exact structure, then the simple objects are the indecomposable projectives and $\mathcal{C}$ is a length exact category satisfying the Jordan--H\"older property by the Krull--Remak--Schmidt theorem for projective modules. It is also well-known that the exact category $\mathcal{F}(\Delta)$ for a quasi-hereditary algebra has as its simple objects the standard modules $\Delta_\mathtt{i}$ and the subcategory satisfies the Jordan--H\"older property. One way to see this is to use that the multiplicities of a $\Delta_\mathtt{i}$ appearing in a $\Delta$-filtration of a module $M$ is determined by the dimension of the space $\Hom(M,\nabla_\mathtt{i})$. 

\begin{question}
What are other examples of exact length categories satisfying the Jordan--H\"older property?
\end{question}

In general, it seems hard to determine when an exact category satisfies the Jordan--H\"older property. For certain subcategories of module categories, there is however a simple numerical criterion due to Enomoto. 

\begin{theorem}[{\cite[Theorem 5.10]{Eno22}}]
Let $\Lambda$ be an Artin algebra and let $U$ be a module of finite injective dimension. Then, 
\[{}^{\perp_{>0}} U=\{X\in \modu \Lambda\,|\,\Ext^i_\Lambda(X,U)=0 \text{ for all } i>0\}.\]
is a length exact category satisfying the Jordan--H\"older property if and only if the number of simples in the exact category ${}^{\perp_{>0}} U$ is equal to the number of indecomposable projectives in it. 
\end{theorem}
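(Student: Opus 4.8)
The plan is to recognise $\mathcal{F} := {}^{\perp_{>0}} U$ as a resolving subcategory of $\modu\Lambda$ and then to reduce the Jordan--H\"older property to a statement about its Grothendieck group, the finiteness of the injective dimension $\operatorname{id}_\Lambda U$ being the decisive hypothesis.

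First I would record the basic structure of $\mathcal{F}$. From the long exact sequences for $\Ext^*_\Lambda(-,U)$ one checks directly that $\mathcal{F}$ is closed under extensions, under direct summands and under kernels of deflations, and that it contains all projective $\Lambda$-modules, since $\Ext^{>0}_\Lambda(\text{projective},-)=0$. Hence $\mathcal{F}$ is a resolving subcategory of $\modu\Lambda$, in particular an essentially small, idempotent-complete exact category with enough projectives, and the projective objects of $\mathcal{F}$ are exactly the projective $\Lambda$-modules (any exact-projective object of $\mathcal{F}$ is a summand of a projective $\Lambda$-module, since deflations in $\mathcal{F}$ are epimorphisms in $\modu\Lambda$). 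Consequently the number of indecomposable projective objects of $\mathcal{F}$ equals the number $r$ of indecomposable projective $\Lambda$-modules, which in turn equals the number of isomorphism classes of simple $\Lambda$-modules. Moreover every inflation of $\mathcal{F}$ is a proper monomorphism of finite-length modules and so strictly increases length; thus an inflation series of $X$ has length at most $\operatorname{length}_\Lambda X$, and $\mathcal{F}$ is a length exact category.

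Next I would invoke the general theory of Grothendieck monoids of exact categories from \cite{Eno22}: for a length exact category $\mathcal{E}$ the Grothendieck monoid, and a fortiori $K_0(\mathcal{E})$, is generated by the classes of the simple objects, and the Jordan--H\"older property holds if and only if the Grothendieck monoid is free on those classes. In particular, under the Jordan--H\"older property $K_0(\mathcal{E})$ is free abelian with the simple objects as a basis, so the number of simple objects is then $\operatorname{rank}_{\mathbb{Z}} K_0(\mathcal{E})$; in general the number of simple objects is at least this rank, and --- this is the more delicate half of the general theory --- the Jordan--H\"older property holds as soon as equality is reached (torsion-freeness of $K_0$ being automatic in the present setting). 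Granting this, the theorem reduces to the single assertion
\[
\operatorname{rank}_{\mathbb{Z}} K_0(\mathcal{F}) = r .
\]

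Proving this equality is where I expect the real work to be, and it is precisely here that $\operatorname{id}_\Lambda U<\infty$ enters; without it the category ${}^{\perp_{>0}}U$ need not even be well behaved. Fix a minimal injective coresolution $0\to U\to I^0\to\cdots\to I^d\to 0$. For any $\Lambda$-module $Y$ of finite injective dimension the Euler characteristic $\chi_Y(X):=\sum_{i\ge 0}(-1)^i\dim_\Bbbk\Ext^i_\Lambda(X,Y)$ defines a homomorphism $K_0(\modu\Lambda)\to\mathbb{Z}$, which restricts to an additive invariant on $K_0(\mathcal{F})$; for $X\in\mathcal{F}$ and $Y=U$ it is just $\dim_\Bbbk\Hom_\Lambda(X,U)$, and for the injective terms $Y=I^j$ it records composition multiplicities. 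These invariants assemble into the map $K_0(\mathcal{F})\to K_0(\modu\Lambda)=\mathbb{Z}^r$ induced by the inclusion, and one half of the rank count amounts to showing its image has full rank $r$ (using $\operatorname{id}_\Lambda U<\infty$ to see that $\mathcal{F}$ contains enough modules for this). For the other half one uses that $\mathcal{F}$ is resolving, so the syzygy of any $X\in\mathcal{F}$ again lies in $\mathcal{F}$, together with contravariant finiteness of $\mathcal{F}$ --- a consequence of $\operatorname{id}_\Lambda U<\infty$, cf.\ \cite{AR91} --- to run an Auslander--Buchweitz-type approximation, building for each $X\in\mathcal{F}$ a finite chain of conflations inside $\mathcal{F}$ that expresses $[X]$ through classes coming from a bounded portion of $\modu\Lambda$; this caps the rank of $K_0(\mathcal{F})$ at $r$ and at the same time yields torsion-freeness. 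Combining the three steps: the Jordan--H\"older property for $\mathcal{F}$ holds if and only if the number of simple objects of $\mathcal{F}$ equals $\operatorname{rank}_{\mathbb{Z}} K_0(\mathcal{F})=r$, which is the number of indecomposable projective objects of $\mathcal{F}$, as claimed. The main obstacle is the rank computation of the third step, and in particular turning contravariant finiteness into the concrete finite resolutions needed to control $K_0(\mathcal{F})$.
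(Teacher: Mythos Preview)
The paper does not prove this theorem; it is a survey and merely quotes the result from Enomoto's article \cite{Eno22} without any argument. There is therefore no ``paper's own proof'' to compare against.

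That said, your sketch is broadly along the lines of Enomoto's actual proof: the result is deduced from his general criterion that a length exact category with finitely many simples satisfies the Jordan--H\"older property if and only if its Grothendieck group is free of rank equal to the number of simples, combined with the computation that $K_0({}^{\perp_{>0}}U)\cong K_0(\modu\Lambda)\cong\mathbb{Z}^r$ when $U$ has finite injective dimension. Your identification of $\mathcal{F}$ as resolving, of its projectives as the projective $\Lambda$-modules, and of the length property are all correct and routine.

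The part you flag as ``the main obstacle'' is indeed the substantive step, but your proposed route through contravariant finiteness and Auslander--Buchweitz approximations is more roundabout than necessary. Enomoto's argument for $K_0(\mathcal{F})\cong\mathbb{Z}^r$ is more direct: since $\operatorname{id}_\Lambda U=d<\infty$, the $d$-th syzygy $\Omega^d M$ of \emph{any} $\Lambda$-module lies in $\mathcal{F}$, so every $M\in\modu\Lambda$ admits a finite resolution by objects of $\mathcal{F}$ (namely $\Omega^d M$ followed by projectives). This alone gives that the natural map $K_0(\mathcal{F})\to K_0(\modu\Lambda)$ is an isomorphism via a standard d\'evissage/resolution argument; no appeal to \cite{AR91} or functorial finiteness is needed. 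Your invocation of contravariant finiteness is not wrong, but it obscures the elementary reason the hypothesis $\operatorname{id}_\Lambda U<\infty$ suffices.
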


A particular instance of such exact categories are given by the contravariantly finite resolving subcategories mentioned before. 

\begin{theorem}[{\cite[Theorem 5.5, Corollary 5.6]{AR91}}]
Let $\Lambda$ be an Artin algebra. Then the contravariantly finite resolving subcategories $\mathcal{F}$ of $\modu \Lambda$ which in addition satisfy that every $\Lambda$-module admits a finite resolution by objects in $\mathcal{F}$ are precisely the subcategories of the form ${}^{\perp_{>0}} T$ for a cotilting module $T$. The additional assumption of admitting finite resolutions is redundant if $\Lambda$ is of finite global dimension. 
\end{theorem}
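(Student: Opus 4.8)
The plan is to prove the two inclusions between the two classes of subcategories in question separately and then read off the last sentence. Throughout, write $\widehat{\mathcal{F}}$ for the class of $\Lambda$-modules admitting a \emph{finite} resolution $0\to F_d\to\cdots\to F_0\to M\to 0$ with all $F_i\in\mathcal{F}$, so that the standing extra hypothesis becomes $\widehat{\mathcal{F}}=\modu\Lambda$. The last sentence is then immediate: if $\Lambda$ has finite global dimension, any finite projective resolution of a module $M$ is already a finite $\mathcal{F}$-resolution, since a resolving subcategory contains all projectives; hence $\widehat{\mathcal{F}}=\modu\Lambda$ holds for \emph{every} resolving $\mathcal{F}$ and the extra hypothesis is vacuous.

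For the first direction, let $T$ be a (basic) cotilting module and set $\mathcal{F}:={}^{\perp_{>0}}T$. From the long exact sequence in $\Ext_\Lambda(-,T)$ one reads off that $\mathcal{F}$ is closed under summands, extensions and kernels of epimorphisms and contains the projectives (as $\Ext^{\ge 1}_\Lambda(P,T)=0$), so $\mathcal{F}$ is resolving. Contravariant finiteness of $\mathcal{F}$ I would deduce from the finiteness of the injective dimension $d$ of $T$ together with the $\add T$-coresolution $0\to T_d\to\cdots\to T_0\to \mathbb{D}\Lambda\to 0$, via the usual construction of a right $\mathcal{F}$-approximation; since $\mathcal{F}$ contains the projectives, this approximation can be taken epic. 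Finally, to see $\widehat{\mathcal{F}}=\modu\Lambda$, iterate: choosing epic right $\mathcal{F}$-approximations $0\to Y_j\to F_{j-1}\to Y_{j-1}\to 0$ with $Y_0=M$ and $F_{j-1}\in\mathcal{F}$, the vanishing $\Ext^{\ge 1}_\Lambda(F_{j-1},T)=0$ yields $\Ext^i_\Lambda(Y_j,T)\cong\Ext^{i+1}_\Lambda(Y_{j-1},T)$ for $i\ge 1$, hence $\Ext^i_\Lambda(Y_d,T)\cong\Ext^{i+d}_\Lambda(M,T)=0$ for all $i\ge 1$; thus $Y_d\in\mathcal{F}$ and $0\to Y_d\to F_{d-1}\to\cdots\to F_0\to M\to 0$ is the required resolution.

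For the converse, given $\mathcal{F}$ contravariantly finite and resolving with $\widehat{\mathcal{F}}=\modu\Lambda$, set $\omega=\mathcal{F}\cap\mathcal{F}^{\perp_{>0}}$, the subcategory of $\Ext$-injective objects of $\mathcal{F}$, where $\mathcal{F}^{\perp_{>0}}=\{Y\mid\Ext^i_\Lambda(\mathcal{F},Y)=0,\ i>0\}$, and proceed in steps. (i) Using that $\mathcal{F}$ is resolving, show that a minimal right $\mathcal{F}$-approximation of any $M$ is epic and sits in a sequence $0\to Y_M\to F_M\to M\to 0$ with $Y_M\in\mathcal{F}^{\perp_{>0}}$ (closure under kernels of epimorphisms is what promotes $\mathcal{F}^{\perp_1}$-vanishing to $\mathcal{F}^{\perp_{>0}}$-vanishing, by a dimension shift along projective resolutions). (ii) Iterating (i) and invoking $\widehat{\mathcal{F}}=\modu\Lambda$, deduce that the minimal $\mathcal{F}$-resolution of every $M$ has finite length, uniformly bounded by some $n$, with its last syzygy in $\mathcal{F}\cap\mathcal{F}^{\perp_{>0}}=\omega$; applying this to the indecomposable injectives (which already lie in $\mathcal{F}^{\perp_{>0}}$) one obtains that $\omega$ is contravariantly finite with only finitely many indecomposable objects, their number equal to the number of simple $\Lambda$-modules, so $\omega=\add T$ for a suitable module $T$. (iii) Then $\Ext^{>0}_\Lambda(T,T)=0$ is automatic from $T\in\mathcal{F}\cap\mathcal{F}^{\perp_{>0}}$, the bound $n$ forces $\Ext^{>n}_\Lambda(-,T)=0$ by the same dimension shift as above (hence $T$ has injective dimension at most $n$), and the $\omega$-coresolutions produced in (ii) exhibit the required $\add T$-coresolution of $\mathbb{D}\Lambda$; so $T$ is cotilting. (iv) Finally, $\mathcal{F}\subseteq{}^{\perp_{>0}}T$ because $T\in\mathcal{F}^{\perp_{>0}}$; conversely, for $X\in{}^{\perp_{>0}}T$ the long exact sequence in $\Ext_\Lambda(-,T)$ applied to the sequence $0\to Y_X\to F_X\to X\to 0$ from (i) gives $Y_X\in{}^{\perp_{>0}}T$ as well, so by induction on the (finite) length of the minimal $\mathcal{F}$-resolution of $X$ one gets $Y_X\in\mathcal{F}$, whence $Y_X\in\omega\subseteq\add T$ and $\Ext^1_\Lambda(X,Y_X)=0$; the approximation sequence therefore splits and $X\in\mathcal{F}$.

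The main obstacle is the converse direction, and within it steps (i) and (ii): extracting from contravariant finiteness, the resolving property and $\widehat{\mathcal{F}}=\modu\Lambda$ the \emph{special} approximation sequences whose kernels are $\Ext$-orthogonal to all of $\mathcal{F}$, and then showing that the $\Ext$-injectives $\omega$ assemble into a single module with precisely as many indecomposable summands as there are simple modules --- equivalently, that $\mathcal{F}^{\perp_{>0}}$ coincides with the class of modules admitting a finite $\omega$-coresolution. This is the technical core of the Auslander--Buchweitz and Auslander--Reiten approximation machinery; everything else is bookkeeping with long exact sequences and the dimension-shift trick already used in the first direction.
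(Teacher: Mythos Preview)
The paper does not prove this theorem: it is a survey article and the result is merely quoted from \cite[Theorem 5.5, Corollary 5.6]{AR91} without any argument. So there is no ``paper's own proof'' to compare against.

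That said, your outline is essentially the standard Auslander--Reiten argument (combined with Auslander--Buchweitz approximation theory), and the structure is correct. A few remarks on places where your sketch is thin: in step (i), the fact that the kernel $Y_M$ of a \emph{minimal} right $\mathcal{F}$-approximation lies in $\mathcal{F}^{\perp_{>0}}$ is Wakamatsu's lemma, and you should invoke it by name rather than leave it as a parenthetical. In step (ii), the passage from ``every module has a finite $\mathcal{F}$-resolution'' to ``there is a \emph{uniform} bound $n$'' is not automatic; one argues that the $\mathcal{F}$-resolution dimension of any module is bounded by the maximum over its composition factors (using closure under extensions and kernels of epimorphisms), so the finite set of simples gives the global bound. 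Also in (ii), the assertion that $\omega$ has exactly as many indecomposables as there are simple $\Lambda$-modules requires the observation that the minimal right $\mathcal{F}$-approximations $T_\mathtt{i}$ of the indecomposable injectives $I_\mathtt{i}$ are themselves Ext-injective in $\mathcal{F}$, pairwise non-isomorphic, and generate $\omega$ --- this is where the real work lies, and you are right to flag it as the technical core. Step (iv) is fine as written.
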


Using his criterion, Enomoto was able to prove that torsion-free classes for Nakayama algebras satisfy the Jordan--H\"older property, see \cite[Corollary 5.19]{Eno22} as well as give a combinatorial description of when torsion-free classes for path algebras of Dynkin quivers and their preprojective algebras satisfy the Jordan--H\"older property, see \cite[Theorem C, Corollary 5.6]{Eno21}.

Returning to the subject of the previous section, the $A_\infty$-language gives us additional insights in how to define the bound quiver of an exact category. Therefore, we continue by describing a way to construct an exact category from a set of objects filtering every object, the category of twisted modules or twisted stalks. The idea is simple, but requires to technical language of an enhancement to be executed. Since every object can be build iteratively using a sequence of extensions, i.e. elements of $\Ext^1$, the category of twisted modules is defined as said collection of $\Ext^1$'s. However, as there might be obstructions to building an object from a sequence of $Ext^1$'s. To take this into account, there is a condition to ensure that we do not run into any relation, i.e. an element of $\Ext^2$, namely the Maurer--Cartan equation. Given an $A_\infty$-algebra $\mathcal{E}$ (such as the $\Ext$-algebra of some collection of modules), an object of the homotopy category of \textbf{twisted modules} is a strictly upper triangular matrix $x$ in the degree one part $\mathcal{E}^1$ (of any size) such that the Maurer--Cartan equation
\[\sum_{t=1}^\infty (-1)^{\frac{t(t-1)}{2}} m_t(x^{\otimes t})=0\]
is satisfied, where $m_t$ denotes the natural extension of $m_t$ to matrices. Note that the result of the Maurer--Cartan equation lies in the degree two part $\mathcal{E}^2$, so its vanishing means that the sequence of extensions is unobstructed. Morphisms between two matrices $x$, of size $n$, and $y$, of size $m$, are defined to be $m\times n$-matrices $f$ in $\mathcal{E}^0$ such that the equation
\[\sum_{t=0}^\infty\sum_{\substack{i_0,i_1\geq 0\\i_0+i_1=t}} \pm m_{1+t}(y^{\otimes i_1}\otimes f\otimes x^{\otimes i_0})=0\]
holds, where we the reader is referred to \cite[(7.6)]{K01} for the sign. Composition of $f\colon x\to y$ and $g\colon y\to z$ is defined similarly as
\[\sum_{t=0}^\infty\sum_{\substack{i_0,i_1,i_2\geq 0\\i_0+i_1+i_2=t}} \pm m_{2+t}(z^{\otimes i_2}\otimes g\otimes y^{\otimes i_1}\otimes f\otimes x^{\otimes i_0}).\] 

This construction enables one to construct an exact category from a collection of modules filtering it.

\begin{theorem}[{\cite[(7.7)]{K01}}\footnote{The stated reference has the extra assumption that the $A_\infty$-structure may be chosen to be strictly unital. By now it is well-known that this is always the case. To see this, one can e.g. use that every homologically unital $A_\infty$-algebra is $A_\infty$-isomorphic to a strict one, see \cite[Lemma 2.1]{Sei08}}]
Let $\Lambda$ be a finite-dimensional algebra and let $M_\mathtt{1},\dots,M_\mathtt{n}$ be a collection of modules. then there is an equivalence of categories 
\[\mathcal{F}(M)\cong \twmod \Ext^*_\Lambda(M,M).\]
\end{theorem}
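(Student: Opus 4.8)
The plan is to construct the equivalence $\mathcal{F}(M) \cong \twmod \Ext^*_\Lambda(M,M)$ by producing a functor in each direction and checking they are mutually quasi-inverse. The cleanest route uses a dg enhancement. First I would fix a projective resolution $P_i^*$ of each $M_i$ and let $\mathcal{D} = \End_\Lambda\left(\bigoplus_i P_i^*\right)$ be the dg endomorphism algebra, which carries the idempotents $e_i$ cutting out the summands. The homology of $\mathcal{D}$ is $\Ext^*_\Lambda(M,M)$, and by Kadeishvili's theorem there is a minimal $A_\infty$-structure on $\mathcal{E} := \Ext^*_\Lambda(M,M)$ together with an $A_\infty$-quasi-isomorphism $f\colon \mathcal{E}\to \mathcal{D}$; I would also invoke the footnote's remark that this can be taken strictly unital. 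The point is that twisted modules are an $A_\infty$-quasi-isomorphism invariant (this is exactly why Keller phrases the twisted completion the way he does), so $\twmod \mathcal{E} \simeq \twmod \mathcal{D}$, and it suffices to identify $\twmod \mathcal{D}$ with $\mathcal{F}(M)$.

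Next I would unwind $\twmod \mathcal{D}$ concretely. A twisted module over the honest dg algebra $\mathcal{D}$ is a strictly upper-triangular matrix $x$ of degree-one endomorphisms satisfying the Maurer--Cartan equation, which for a dg algebra reduces to $m_1(x) + m_2(x\otimes x) = 0$, i.e. $dx + x^2 = 0$ (up to sign). Such a datum, placed on a finite direct sum of shifted copies of the $P_i^*$, is precisely a one-sided twisted complex built out of the $P_i^*$ with $M_i$-shaped "strata", and taking total objects/homology sends it to a $\Lambda$-module equipped with an iterated extension structure with subquotients among the $M_i$ — that is, an object of $\mathcal{F}(M)$. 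Conversely, a module $N$ with an $M$-filtration $0 = N_0 \subset N_1 \subset \cdots \subset N_t = N$, $N_j/N_{j-1}\cong M_{\mathtt{i}_j}$, gives rise to a "filtered" projective resolution: choosing compatible resolutions of the subquotients and lifting the connecting maps along the $P^*$'s produces exactly the off-diagonal entries of a Maurer--Cartan element. The morphism and composition formulas in $\twmod \mathcal{D}$ then match, under homology, with $\Lambda$-module homomorphisms, because the higher terms involving $m_{\geq 3}$ vanish for a dg algebra and the $f$-, $g$-sums collapse to the usual cone/telescope description of maps between twisted complexes.

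The two remaining things to check are that this assignment is well-defined on isomorphism classes and that it is fully faithful and essentially surjective. Essential surjectivity is the filtration statement for $\mathcal{F}(M)$ just sketched: every $M$-filtered module arises, because we can realise its filtration by a twisted complex of projective resolutions. Full faithfulness amounts to comparing $\Hom$-spaces: the degree-zero cohomology of the morphism complex between two twisted modules computes $\Hom_\Lambda$ between the associated filtered modules, which follows from the long exact sequences attached to the filtrations together with the fact that $\twmod \mathcal{D}$ sits inside the derived category $D(\Lambda)$ as an extension-closed subcategory. Well-definedness (independence of the choices of resolutions and of the liftings of connecting maps) follows from the quasi-isomorphism invariance of the whole construction, or can be checked by hand using that two lifts of the same extension class differ by a homotopy, which is absorbed by a morphism of twisted modules.

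The main obstacle I expect is the bookkeeping in identifying Maurer--Cartan elements with $M$-filtrations precisely and functorially: one must verify that the total object of a Maurer--Cartan twisted complex really lies in the \emph{abelian} subcategory of honest modules (not just in $D(\Lambda)$ in some degree-spread-out way) and that the strictly-upper-triangular shape of $x$ corresponds exactly to a finite filtration with the prescribed subquotients — this is where the strict unitality and the choice of a good dg model genuinely matter, and where one must be careful that the higher $A_\infty$-operations $m_{\geq 3}$ on $\mathcal{E}$ (which are generally nonzero) are exactly what encodes the obstruction-theoretic data that the dg model $\mathcal{D}$ hides in its differential. Matching signs between the Maurer--Cartan/morphism formulas for $\mathcal{E}$ and the naive cone formulas for $\mathcal{D}$ is routine but tedious, and I would simply cite \cite[(7.6), (7.7)]{K01} rather than reproduce it.
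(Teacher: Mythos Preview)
The paper does not give its own proof of this theorem: it is stated with a citation to Keller \cite[(7.7)]{K01} and a footnote about strict unitality, and then the text moves on. So there is nothing in the paper to compare your argument against.

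That said, your sketch is broadly the right strategy and matches how the result is typically proved in the literature: pass to the dg enhancement $\mathcal{D}=\End_\Lambda(\bigoplus P_i^*)$, use that $\twmod$ is invariant under $A_\infty$-quasi-isomorphism to reduce to $\twmod\mathcal{D}$, and then identify one-sided twisted complexes over $\mathcal{D}$ with $M$-filtered modules. One small imprecision: in the paper's definition of $\twmod$ (as opposed to the larger category of twisted \emph{complexes}), there are no shifts---objects are strictly upper-triangular matrices with entries in $\mathcal{E}^1$, not in arbitrary degree. This is exactly what forces the total object to be an honest module rather than a complex spread over several degrees, and it is what distinguishes $\mathcal{F}(M)$ from the full triangulated closure. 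Your paragraph about ``shifted copies of the $P_i^*$'' blurs this point; if you were writing this up you would want to make clear that the absence of shifts is what pins down the abelian subcategory $\mathcal{F}(M)$ inside $D(\Lambda)$.
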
  

Looking at the degrees of all the elements involved, one sees that one in fact does not need the whole information of the $A_\infty$-structure. It suffices to know the parts with inputs in degrees $0$ and $1$ (and outputs in degrees $0$, $1$, and $2$). There is also a construction of the category of twisted complexes, where one also formally closes under shifts and for which the full $A_\infty$-structure is necessary. Another indictation that the low degrees are especially is the following $A_\infty$-generation property. The result is due to unpublished work of Keller, for a proof see \cite[Theorem 3.22]{KM23b}.

\begin{theorem}
Let $\Lambda$ be a finite-dimensional algebra and let $M_\mathtt{1},\dots,M_\mathtt{n}$ be modules such that $\mathcal{F}(M)$ is resolving. Then $\Ext^{\geq 1}_\Lambda(M,M)$ is generated by $\Ext^1_\Lambda(M,M)$ as an $A_\infty$-algebra. 
\end{theorem}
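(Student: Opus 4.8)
The plan is to exploit the resolving hypothesis to produce projective resolutions that never leave $\mathcal{F}(M)$, and then to transport the whole problem into the twisted-module description $\mathcal{F}(M)\simeq\twmod E$ of the preceding theorem, where $E:=\Ext^*_\Lambda(M,M)$ is equipped with a fixed minimal $A_\infty$-structure. Write $N=M_\mathtt{1}\oplus\cdots\oplus M_\mathtt{n}$ and let $E^{(1)}\subseteq E$ be the $A_\infty$-subalgebra generated by $E^0\oplus E^1$; the goal is $E^{(1)}=E$. First I would observe that, $\mathcal{F}(M)$ being resolving, the $\Lambda$-projectives are precisely the projective objects of the exact category $\mathcal{F}(M)$, there are enough of them, and syzygies of objects of $\mathcal{F}(M)$ stay in $\mathcal{F}(M)$; hence $N$ has a projective resolution $P^\bullet\twoheadrightarrow N$ with all $\Omega^jN\in\mathcal{F}(M)$, and $\Ext^*_\Lambda$ between objects of $\mathcal{F}(M)$ coincides with the intrinsic Yoneda $\Ext$ of this exact category. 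In particular there are dimension-shift isomorphisms
\[\Ext^m_\Lambda(N,N)\;\cong\;\Ext^{m-1}_\Lambda(\Omega N,N)\;\cong\;\cdots\;\cong\;\Ext^1_\Lambda(\Omega^{m-1}N,N)\qquad(m\ge 1),\]
each step being Yoneda composition with the class of $0\to\Omega^{j+1}N\to P^j\to\Omega^jN\to0$.

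The second step is to feed the syzygies into the twisted-module dictionary. Under $\mathcal{F}(M)\simeq\twmod E$, each $\Omega^jN$ becomes a strictly upper triangular Maurer--Cartan matrix $x^{(j)}$ with entries in $E^1$ (recording its filtration by the $M_\mathtt{i}$), the projectives $P^j$ become analogous matrices (projectives lie in $\mathcal{F}(M)$, hence are filtered), and the deflations $P^j\twoheadrightarrow\Omega^jN$ and inflations $\Omega^{j+1}N\rightarrowtail P^j$ become morphisms of twisted modules given by degree $0$ matrices. Passing to the associated twisted complexes, the $\Hom$-complexes computing $\Ext^*_\Lambda$ between these objects, together with their compositions, are assembled from the maps $m_k$ of $E$ and the degree-one entries of the matrices $x^{(j)}$, with signs as in \cite{K01}; consequently, whenever such an operation is applied to classes living in $\Ext^1$ it produces only values of the components $m_k\colon(E^1)^{\otimes k}\to E^2$ (and their lower-arity relatives), all of which lie in $E^{(1)}$ by definition. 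The same formalism realises the dimension-shift isomorphisms above through these operations, rather than merely abstractly.

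Now I would induct on $m$. For $m=0,1$ there is nothing to prove since $E^{(1)}\supseteq E^0\oplus E^1$; this step is the analogue for $M$ of the degree-one generation statement for the simple modules recorded in Lemma~\ref{ext-1-generation}. For $m\ge2$, the dimension shift exhibits every class of $\Ext^m_\Lambda(N,N)$ as the image, under the operations of the previous paragraph, of classes of $\Ext^{\le m-1}_\Lambda(M_\mathtt{i},M_\mathtt{j})$ and of the entries of $x^{(1)}\in E^1$; the former lie in $E^{(1)}$ by the inductive hypothesis, the latter by definition, and $E^{(1)}$ is stable under these operations, so the class lies in $E^{(1)}$. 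Hence $E^{(1)}=E$.

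The crux, and the step that must be carried out with care, is the passage in the second and third paragraphs from ``the preceding theorem reconstructs $\mathcal{F}(M)$ from the degree $\le1$ part of $E$'' --- a statement that says nothing about $E$ in degrees $\ge2$ --- to ``the dimension-shift isomorphisms are implemented by the operations $m_k$ evaluated on $\Ext^1$-classes.'' Making this precise requires unwinding Merkulov's construction, equivalently the differential of the twisted $\Hom$-complex, far enough to place the higher self-extensions of $N$ literally in the image of $\sum_k m_k$ restricted to tensor powers of $\Ext^1_\Lambda(M,M)$, composed with lower operations. Conceptually this is the relative $A_\infty$-Koszul-duality assertion that, $\mathcal{F}(M)$ being resolving, $E$ is the $A_\infty$-Koszul dual of the ordinary (degree $0$) algebra $\End_\Lambda(\Lambda)=\Lambda$ --- the endomorphism ring of a projective generator of the exact category $\mathcal{F}(M)$ --- and that the Koszul dual of a degree $0$ algebra is generated in degree $1$, exactly as in the classical case of simple modules.
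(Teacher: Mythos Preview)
The paper does not actually prove this theorem; it attributes the result to unpublished work of Keller and refers the reader to \cite[Theorem~3.22]{KM23b}. So there is no in-paper argument to compare against, and your proposal has to be assessed on its own merits.

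Your strategy is sound and is a natural route to the statement. The resolving hypothesis is used exactly where it should be: it puts the $\Lambda$-projectives inside $\mathcal{F}(M)$, forces all syzygies $\Omega^jN$ to stay in $\mathcal{F}(M)$ (hence to acquire Maurer--Cartan presentations with entries in $E^1$), and identifies Yoneda $\Ext$ in the exact category with $\Ext_\Lambda$. The inductive step is correct once the dictionary is in place: a single dimension shift writes every $\zeta\in E^m$ as the twisted composition
\[
\zeta \;=\; m_2^{\mathrm{tw}}(\xi,\eta)\;=\;\sum_{k\ge 2}\pm m_k\bigl(\xi,\underbrace{x^{(1)},\dots,x^{(1)}}_{k-2},\eta\bigr),
\]
with $\xi$ a cocycle matrix over $E^{m-1}\subset E^{(1)}$ (by induction) and $x^{(1)},\eta$ over $E^1\subset E^{(1)}$; closure of $E^{(1)}$ under the $m_k$ finishes. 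Since both Maurer--Cartan elements on the outside are zero (both objects are $N$) and $E$ is minimal, this is literally an element of $E^m$, with no boundary ambiguity.

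The gap you flag in your last paragraph is the only genuine one, and you have located it precisely: one must verify that the equivalence $\mathcal{F}(M)\simeq\twmod E$ of the preceding theorem upgrades to an identification of Yoneda $\Ext$-groups and Yoneda products with $H^*$ of the twisted $\Hom$-complexes and the induced $m_2^{\mathrm{tw}}$. This is standard for pretriangulated $A_\infty$-categories --- the equivalence of \cite[(7.7)]{K01} lives at the level of filtered derived categories, not merely additive categories --- but it is not a triviality, and it is exactly the content that the reference \cite{KM23b} has to supply. Once that compatibility is written out, your sketch becomes a proof.

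One small inaccuracy: your final paragraph speaks of placing classes ``literally in the image of $\sum_k m_k$ restricted to tensor powers of $\Ext^1$''. That is stronger than what your induction proves and stronger than what is needed; the induction only shows $E^m$ lies in the \emph{iterated} closure $E^{(1)}$, not that every class is hit by a single higher multiplication on degree-one inputs. This does not affect the argument but is worth phrasing correctly.
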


We conclude the section with a general question about $A_\infty$-structures on $\Ext$-algebras.

\begin{question}
What special properties does the $A_\infty$-algebra $\Ext^*(M,M)$ have when $M$ is the direct sum of the simple objects in some exact category? What are other special properties if the exact category arises as a resolving subcategory of the module category of a finite-dimensional algebra?  
\end{question}

\section{Regular exact subalgebras}\label{regular-exact-subalgebras}

In the representation theory of finite groups and Lie algebras, an important technique is to study certain unital subalgebras and induction and restriction functors. Examples are the Cartan and Borel subalgebras in Lie theory as well as the group algebras of defect groups of blocks of finite groups. This technique is less prevalent in the representation theory of quivers and finite-dimensional algebras. The purpose of this section is to propose a class of subalgebras of finite-dimensional algebras which is well-adapted to studying a particular exact subcategory of a module category. More precisely, the idea is that given a collection of modules $M_\mathtt{1},\dots,M_\mathtt{n}$, e.g. the collection of simples in some exact subcategory, we want to describe $\mathcal{F}(M)$ (or potentially its idempotent closure) via a ring extension $A\subseteq R$ such that $A$ has $\mathtt{n}$ simple modules and $M_\mathtt{i}\cong R\otimes_A M_\mathtt{i}$ for all $\mathtt{i}=\mathtt{1},\dots,\mathtt{n}$. In this section, we want to explore different conditions which make the ring extension more well-behaved, so that it can serve as an analogue of the quiver and relations for $\mathcal{F}(M)$. A first observation is that we can assume $A$ to be basic. A categorical way to see this is to recall that a ring extension $A\subseteq R$ corresponds to the monad $R\otimes_A -$ on the category of $A$-$A$-bimodules. Replacing $A$ up to Morita equivalence by its basic representative $A'$ thus yields a monad on the category $A'$-$A'$-bimodules, which in turn corresponds to a ring extension $A'\subseteq R'$. The remaining properties of the ring extension $A\subseteq R$ we want to demand, can be motivated by looking into the dual coring. This is the perspective taken in \cite{BKK20}. Recall that given a $\Bbbk$-coalgebra $C$, its dual $C^*=\Hom_\Bbbk(C,\Bbbk)$ admits the structure of a $\Bbbk$-algebra. However, conversely given a $\Bbbk$-algebra $R$, one needs that $R$ is finite-dimensional in order to conclude that its dual $R^*$ has the structure of a $\Bbbk$-coalgebra. A similar statement holds when replacing the base field $\Bbbk$ by a (potentially non-commutative) base algebra $A$.

\begin{definition}
Let $A$ be a $\Bbbk$-algebra. An \textbf{$A$-coring} is a comonoid object in the category of $A$-$A$-bimodules, i.e. it is an $A$-$A$-bimodule $V$ together with $A$-$A$-bilinear maps $\mu\colon V\to V\otimes_A V$ and $\varepsilon\colon V\to A$ such that the usual coassociativity and counitality axioms hold. 
\end{definition}

Because $A$ is in general non-commutative, there are two associated duals, already introduced by Sweedler in \cite[Section 3]{Swe75} and studied from a representation-theoretic perspective by Burt and Butler in the proceedings of ICRA 1990 in Tsukuba in \cite{BB91}. The latter reference uses the terminology of left and right algebra. 

\begin{definition}
Given an $A$-coring $V$, its \textbf{left algebra} is the vector space $\Hom_{A^{\op}}(V,A)$ with multiplication induced by $\Hom_{A^{\op}}(\mu,A)$ and unit induced by $\Hom_{A^{\op}}(\varepsilon,A)$. Dually its \textbf{right algebra} is the vector space $\Hom_A(V,A)$ with multiplication induced by the opposite of $\Hom_A(\mu,A)$ and unit induced by $\Hom_A(\varepsilon,A)$. 
\end{definition}

The terminology of left and right algebra (although the definition of the left algebra uses right modules while the definition of the right algebra uses left modules) comes from the fact that with this definition $V$ becomes an $L$-$R$-bimodule. 

To construct an $A$-coring from a ring extension $A\subseteq R$, one needs $R$ to be finitely generated and projective on the appropriate side. This is also naturally from our perspective as it implies that the induction functor from $A$-modules to $\mathcal{F}(M)$ is an exact functor.  Returning to the case of $\Bbbk$-coalgebras, the counit $\varepsilon\colon R\to \Bbbk$ is automatically surjective. This is in general not true for $A$-corings. As it turns out, using Morita theory, one can show that under the hypothesis of $R$ being finitely generated and projective over $A^{\op}$, this is equivalent to $R$ being a projective generator in the category of right $A$-modules, see \cite[Theorem 2.5]{BKK20}. 

Projectivity of $R$ on the right implies that the induction functor $R\otimes_A -\colon \modu A\to \modu R$ induces maps 
\begin{equation}\label{ext}
\Ext^m_A(Y,Z)\to \Ext^m_R(R\otimes_A Y, R\otimes Z).
\end{equation}

\begin{definition}
Let $A\subseteq R$ be a ring extension such that $R$ is finitely generated projective on the right. Then the extension is called 
\begin{itemize}
\item \textbf{homological} if the maps in \eqref{ext} are isomorphisms for $m\geq 2$ and epimorphisms for $m=1$ for all $A$-modules $Y$ and $Z$. 
\item \textbf{regular} if the maps in \eqref{ext} are isomorphisms for all $m\geq 1$ and all \emph{simple} $A$-modules $Y$ and $Z$. 
\end{itemize}
\end{definition}

The terminology `homological' is inspired by the somewhat analogous property of being a homological epimorphism. One might wonder why in the definition for homological there is a condition on all modules while for regularity one uses only simple modules. Using a long exact sequence, to check homologicalness it suffices to determine the condition for simple modules $Y$ and $Z$. On the other hand, from regularity the condition of inducing an isomorphism for all modules is much stronger. Over a perfect ground field, it implies that the inclusion $A\subseteq R$ is a split extension, i.e. there exists an $A$-$A$-bimodule map $\pi\colon R\to A$ with $\pi(1_R)=1_A$, see \cite{Lin23}. On the coalgebra side, this condition is equivalent to the coalgebra being cosplit, which can be checked by the vanishing of certain Hochschild cohomology groups for corings, see \cite[30.16]{BW03}. It would be interesting to more generally investigate different cohomology theories associated to this setup like relative Hochschild cohomology for the ring extension $A\subseteq R$ as well as Cartier and Hochschild cohomology for the coring. 

Coming back to the properties of being homological and regular, these also have a natural description passing to the dual side of corings. The property of being homological is, over a perfect ground field, equivalent to the kernel of the counit being a projective bimodule. More generally, it is equivalent to being projectivising, see \cite{BB91}. 

Assuming an algebraically closed ground field, the property of being regular has on the coring side been studied for a long time, see \cite{KR77}. To explain it better, we need to introduce one more technical condition.   

\begin{definition}
A ring extension $A\subseteq R$ is called \textbf{normal} if the inclusion $A\hookrightarrow R$ admits an algebra splitting whose kernel is a right ideal in $R$. 
\end{definition}

Again, on the algebra side, the condition might not seem too enlightening, however as shown in \cite{Kle84}, for the dual $A$-coring it corresponds to the existence of a group-like element, i.e. an element $\omega\in V$ such that $\mu(\omega)=\omega\otimes \omega$ and $\varepsilon(\omega)=1$. Equivalently, such group-like elements correspond to $V$-comodule structures on $A$ by sending a group-like element to the unique map $A\to V\cong V\otimes_A A$ sending $1$ to $\omega\otimes 1$, see \cite[(28.2)]{BW03}. 

Provided with such an element $\omega\in V$, the condition of regularity translates into the condition that $a\omega - \omega a$ is in the two-sided radical of $\overline{V}=\ker\varepsilon$ for all $a$ in the Jacobson radical of $A$.

We are now ready to propose our analogue of quiver and relations in the context of exact categories. 

\begin{definition}
A \textbf{bound quiver} for an exact subcategory of a module category, given as $\mathcal{F}(M)$ is given by a normal regular ring extension $A\subseteq R$ such that $A$ is basic and $R\otimes_A L_\mathtt{i}\cong M_\mathtt{i}$. 
\end{definition}

Given such a ring extension, the maps \eqref{ext} are not only isomorphisms of vector spaces, but can in fact be shown, see \cite[Theorem 3.21]{KM23b}, to induce an isomorphism of $A_\infty$-algebras 
\[\Ext_A^{\geq 1}(L,L)\to \Ext_R^{\geq 1}(R\otimes_A L, R\otimes_A L).\]

Here it is important to note that because the $m_n$ are of degree $2-n$ the strictly positive part of an $A_\infty$-algebra is closed under the higher multiplications. The existence of the above isomorphism tells us that if we want $R\otimes_A L_\mathtt{i}\cong M_\mathtt{i}$, then from the previous section we deduce that the maps $m_n\colon \Ext^1_R(M,M)\to \Ext^2_R(M,M)$ give us a presentation of the subalgebra $A$ as a quiver with relations. But what about the other higher multiplications, in particular the ones including $\Hom(M,M)$, which need not vanish, even for simple objects in an exact category? There is the following analogue of Lemma \ref{ext-quiver}:

\begin{lemma}
Let $A\subseteq R$ be a basic normal regular algebra extension over an algebraically closed field $\Bbbk$. Denote by $V$ the corresponding dual coring $\Hom_{A^{\op}}(R,A)$. Then, there is are isomorphisms
\begin{itemize}
\item $\mathbb{D}\Hom_R(R\otimes_A L_\mathtt{i},R\otimes_A L_\mathtt{j})\cong e_\mathtt{j}\overline{V}/(\rad(A)\overline{V}+\overline{V}\rad(A))e_\mathtt{i}$,
\item $\mathbb{D}\Ext^1_R(R\otimes_A L_\mathtt{i}, R\otimes_A L_\mathtt{j})\cong e_\mathtt{j}Q_+/Q_+^2e_\mathtt{i}$,
\item $\mathbb{D}\Ext^2_R(R\otimes_A L_\mathtt{i}, R\otimes_A L_\mathtt{j})\cong e_\mathtt{j}I/(Q_+I+IQ_+)e_\mathtt{i}$,
\end{itemize}
where $A\cong \Bbbk Q/I$ for some finite quiver $Q$ and an admissible ideal $I$. 
\end{lemma}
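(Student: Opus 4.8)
The plan is to compute the three $\Hom$- and $\Ext$-spaces on the $R$-side directly, exploiting the fact that a bound quiver $A \subseteq R$ gives us strong control over induction from $A$-modules. The key input is that, since the extension is regular, the induction functor $R \otimes_A -$ sends simple $A$-modules to the objects $M_\mathtt{i}$ and induces isomorphisms $\Ext^m_A(L,L) \xrightarrow{\sim} \Ext^m_R(R\otimes_A L, R\otimes_A L)$ for all $m \geq 1$. Combining this with Lemma \ref{ext-quiver} applied to the basic algebra $A \cong \Bbbk Q/I$ immediately yields the second and third isomorphisms: $\mathbb{D}\Ext^1_R(R\otimes_A L_\mathtt{i}, R\otimes_A L_\mathtt{j}) \cong \mathbb{D}\Ext^1_A(L_\mathtt{i},L_\mathtt{j}) \cong e_\mathtt{j}Q_+/Q_+^2 e_\mathtt{i}$ and likewise in degree $2$ with $e_\mathtt{j}I/(Q_+I+IQ_+)e_\mathtt{i}$. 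So the only genuinely new content is the degree-$0$ statement computing $\Hom_R(R\otimes_A L_\mathtt{i}, R\otimes_A L_\mathtt{j})$ in terms of $\overline{V} = \ker\varepsilon$.

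For the $\Hom$-computation I would pass to the coring side. By the adjunction (induction is left adjoint to restriction), $\Hom_R(R\otimes_A L_\mathtt{i}, R\otimes_A L_\mathtt{j}) \cong \Hom_A(L_\mathtt{i}, R\otimes_A L_\mathtt{j})$, where on the right $R$ is regarded as an $A$-$A$-bimodule and then restricted to the left $A$-action. Now write $R = A \oplus \overline{R}$ using the algebra splitting $\pi\colon R \to A$ guaranteed by normality; dualizing, $V = \Hom_{A^{\op}}(R,A) = A \oplus \overline{V}$ with $\overline{V} = \ker\varepsilon$, and the group-like element $\omega$ corresponds to this decomposition. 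The summand $A \otimes_A L_\mathtt{j} = L_\mathtt{j}$ contributes $\Hom_A(L_\mathtt{i}, L_\mathtt{j}) = \delta_{\mathtt{i}\mathtt{j}}\Bbbk$, which is precisely the part of $\overline{V}$ that should be killed in the stated quotient (i.e. there is a degree-shift/normalization where the trivial extensions are removed — this matches the $\mathbb{D}$-dual being $e_\mathtt{j}\overline{V}/(\rad(A)\overline{V} + \overline{V}\rad(A))e_\mathtt{i}$ once one tracks that $\omega$ itself is being quotiented out). The substantive claim is that $\Hom_A(L_\mathtt{i}, \overline{R}\otimes_A L_\mathtt{j})$, equivalently its dual, is identified with $e_\mathtt{j}\overline{V}/(\rad(A)\overline{V} + \overline{V}\rad(A))e_\mathtt{i}$: this comes from the standard fact that for a finitely generated projective bimodule $\overline{R}$ (with dual coring part $\overline{V}$), $\Hom_A(L_\mathtt{i}, \overline{R}\otimes_A L_\mathtt{j}) \cong e_\mathtt{j}(\overline{R}/(\rad A \cdot \overline{R} + \overline{R}\cdot \rad A))e_\mathtt{i}$ — "top" of the bimodule on both sides — and then $\mathbb{D}$ swaps $\overline{R}$ with $\overline{V}$ and swaps the two radical quotients appropriately.

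**Main obstacle.** The delicate point is bookkeeping: keeping track of which side the $A$-action sits on under the induction adjunction versus under the duality $\overline{R} \leftrightarrow \overline{V} = \Hom_{A^{\op}}(\overline{R}, A)$, and correctly identifying the $e_\mathtt{i}$ versus $e_\mathtt{j}$ placement (hence why the statement writes $e_\mathtt{j}(-)e_\mathtt{i}$ rather than $e_\mathtt{i}(-)e_\mathtt{j}$). One must also verify that regularity, in the group-like formulation $a\omega - \omega a \in \operatorname{rad}$ of $\overline{V}$ for $a \in \rad A$, is exactly what makes the bimodule-top quotient $e_\mathtt{j}\overline{V}/(\rad(A)\overline{V} + \overline{V}\rad(A))e_\mathtt{i}$ well-defined and compatible with the $\Ext^{\geq 1}$ identifications — i.e. that the whole triple fits together as the "bound quiver" data on the $R$-side. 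Once the bimodule-top lemma on the coring side is set up cleanly, the $\Ext^1$ and $\Ext^2$ cases are a direct appeal to regularity plus Lemma \ref{ext-quiver}, so essentially all the work is in the $\Hom$-statement and its sign/side conventions.
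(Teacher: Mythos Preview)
The paper does not actually supply a proof for this lemma; it is stated as ``the following analogue of Lemma~\ref{ext-quiver}'' and the text moves on. Your approach for the $\Ext^1$ and $\Ext^2$ items --- regularity gives $\Ext^m_A(L_\mathtt{i},L_\mathtt{j}) \cong \Ext^m_R(R\otimes_A L_\mathtt{i}, R\otimes_A L_\mathtt{j})$ for $m\geq 1$, and then Lemma~\ref{ext-quiver} applied to $A\cong\Bbbk Q/I$ finishes --- is exactly the argument the surrounding text implicitly invokes, and it is correct.

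For the $\Hom$-item your strategy via the induction--restriction adjunction and the normality splitting $R = A \oplus \overline{R}$ is the natural one, but your handling of the identity component is muddled. You write that the $\delta_{\mathtt{i}\mathtt{j}}\Bbbk$ contribution from the $A$-summand ``is precisely the part of $\overline{V}$ that should be killed in the stated quotient'', yet $\overline{V}=\ker\varepsilon$ never contained that piece to begin with: it lives in the complementary summand $A$ of $V = A\oplus\overline{V}$. What your bimodule-top computation actually establishes is
\[
\mathbb{D}\Hom_A(L_\mathtt{i}, \overline{R}\otimes_A L_\mathtt{j}) \;\cong\; e_\mathtt{j}\,\overline{V}/(\rad(A)\overline{V}+\overline{V}\rad(A))\,e_\mathtt{i},
\]
and adding back the diagonal $\delta_{\mathtt{i}\mathtt{j}}\Bbbk$ then identifies the right-hand side with $\mathbb{D}\rad\Hom_R(R\otimes_A L_\mathtt{i}, R\otimes_A L_\mathtt{j})$ rather than with the full $\mathbb{D}\Hom_R$. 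This reading is consistent with the paper's very next paragraph, which says that $\overline{V}$ is the projective $A$-$A$-bimodule generated by the \emph{radical} maps $\rad_\Lambda(\Delta,\Delta)$. So your argument goes through once you stop trying to absorb the identity into $\overline{V}$ and instead read the first bullet as a statement about the non-identity part of $\Hom$.
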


To explain where the higher multiplications come from, we need one more ingredient, namely Roiter's equivalence, which can be seen as a form of relative Koszul duality. 

\begin{theorem}
There is an equivalence of categories between the category of corings with a group-like element and the category of differential graded structures on tensor algebras of degree $1$ bimodules with the natural grading. It is given by sending the pair $(A,V,\mu,\varepsilon,\omega)$ to the tensor algebra $T_A(\overline{V})$ where $\overline{V}=\ker\varepsilon$ and the differential is uniquely induced by 
\begin{align*}
\partial_0(a)&=a\omega-\omega a&\text{for $a\in A$,}\\
\partial_1(v)&=\mu(v) - v\otimes \omega - \omega\otimes v&\text{for $v\in \overline{V}$.} 
\end{align*}
\end{theorem}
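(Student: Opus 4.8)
The plan is to construct the two functors in each direction explicitly and check they are mutually quasi-inverse, with the bulk of the work being the verification that the prescribed formulas for $\partial_0$ and $\partial_1$ do extend uniquely to a differential making $T_A(\overline V)$ a dg algebra, and conversely that a dg structure on such a tensor algebra produces a coassociative counital comultiplication.

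First I would spell out the functor from corings with group-like element to dg tensor algebras. Given $(A,V,\mu,\varepsilon,\omega)$, set $\overline V=\ker\varepsilon$, which is an $A$-$A$-sub-bimodule since $\varepsilon$ is bilinear, and note $V\cong A\omega\oplus\overline V\cong A\oplus\overline V$ as bimodules using the group-like element to split off a free rank-one piece (here one uses $\varepsilon(\omega)=1$). I would then observe that the formulas for $\partial_0$ and $\partial_1$ are forced: on a tensor algebra $T_A(\overline V)$, a derivation of degree $+1$ (with respect to the tensor-length grading, shifted so $\overline V$ sits in degree $1$ and $A$ in degree $0$) is determined by its values on the generators $A$ and $\overline V$ together with the Leibniz rule. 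The content is that the assignment $\partial_0(a)=a\omega-\omega a$, $\partial_1(v)=\mu(v)-v\otimes\omega-\omega\otimes v$ lands in the correct graded pieces (degree $1$ and degree $2$ respectively — here one must check $\mu(v)-v\otimes\omega-\omega\otimes v$ actually lies in $\overline V\otimes_A\overline V$, which follows from counitality of $\mu$ and $\varepsilon(\omega)=1$), is well-defined over $A$ (bilinearity plus compatibility of $\partial_0$ and $\partial_1$ on the relation that multiplication by $a\in A$ is an endomorphism of $\overline V$), and squares to zero. The identity $\partial^2=0$ is exactly a repackaging of coassociativity of $\mu$: checking $\partial^2|_A=0$ uses coassociativity applied to $\omega$ (i.e. $\mu(\omega)=\omega\otimes\omega$) and checking $\partial^2|_{\overline V}=0$ uses the full coassociativity $(\mu\otimes\id)\mu=(\id\otimes\mu)\mu$.

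Next I would build the inverse functor. Starting from a dg algebra structure on $T_A(\overline W)$ for a degree-$1$ bimodule $\overline W$, set $V=A\oplus\overline W$ with the obvious projection $\varepsilon\colon V\to A$, let $\omega=1_A\in A\subseteq V$, and define $\mu\colon V\to V\otimes_A V$ by reversing the two displayed formulas: on $\overline W$ put $\mu(v)=\partial_1(v)+v\otimes\omega+\omega\otimes v$ (reading the degree-$2$ part $\partial_1(v)\in\overline W\otimes_A\overline W\subseteq V\otimes_A V$), and on $A$ put $\mu(a)=a\omega\otimes\omega$, extended bilinearly — one checks consistency against $\partial_0$. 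Counitality is immediate from the construction of $\omega$, and $\varepsilon(\omega)=1$ by definition, so $\omega$ is group-like; coassociativity is again the translation of $\partial^2=0$ run backwards. Finally I would check that the two constructions are mutually inverse on objects (transparent from the formulas, since each uses $V\cong A\oplus\overline V$ canonically) and extend to an equivalence by verifying morphisms correspond: a morphism of corings respecting group-like elements restricts to a bimodule map $\overline V\to\overline W$ which, freely extended, is a dg algebra map, and conversely.

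The main obstacle I expect is bookkeeping around well-definedness over the noncommutative base $A$: every formula must be checked to descend through the relative tensor product $\otimes_A$, and the interaction between the $A$-action and the group-like element $\omega$ (which is not central) must be tracked carefully — in particular $\partial_0$ measures precisely the failure of $\omega$ to be central, and the compatibility between $\partial_0$ on $A$ and $\partial_1$ on $\overline V$ encodes that $\mu$ is a bimodule map rather than merely a left- or right-module map. Once the degree conventions and the identification $V\cong A\oplus\overline V$ are fixed, each axiom becomes a short computation, but establishing that these computations are all consistent with the $A$-bimodule structure is where care is needed. A reference for the classical case over a field is Roiter's original work and Bautista--Kleiner; I would cite those and indicate that the argument goes through verbatim with $\Bbbk$ replaced by $A$ as long as one restricts to $\overline V$ projective on the appropriate side, which is automatic in our setting.
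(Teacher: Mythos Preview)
The paper does not supply a proof of this theorem; it is stated as Roiter's equivalence, a known result from the literature, and the paper immediately moves on to a remark about its generalisation to corings with merely surjective counit (with reference to \cite{Brz13}). There is therefore no argument in the paper against which to compare your proposal.

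That said, your outline is the standard and correct route to the statement, and matches the original arguments of Roiter and of Bautista--Kleiner that the paper alludes to. One small point to tidy: the decomposition $V\cong A\omega\oplus\overline V$ is in general only a splitting of left (or right) $A$-modules, not of bimodules, since $\omega$ need not be central --- indeed $\partial_0(a)=a\omega-\omega a$ precisely measures this failure, as you yourself observe later. This does not damage the argument, because the construction only requires that $\overline V=\ker\varepsilon$ be a sub-bimodule (which it is, $\varepsilon$ being bilinear) and that the group-like $\omega$ furnish a one-sided section; the tensor algebra $T_A(\overline V)$ and the Leibniz extension of $\partial$ make sense with just that. Your verification that $\partial_1(v)$ lands in $\overline V\otimes_A\overline V$ via the two counit identities is exactly right, as is the identification of $\partial^2=0$ with coassociativity.
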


The theorem can be generalised to corings with surjective counit where the group-like element $\omega$ is replaced by the choice of a base point with $\varepsilon(\omega)=1$. However, in this case, as is common in non-augmented settings, one has to replace differential graded structures by curved differential graded structures, for details see \cite{Brz13}. 

By the results presented in this section, the question to define a ring extension $A\subseteq R$ is via duality and Roiter's equivalence reduced to constructing a differential graded algebra structure on the tensor algebra over a bimodule. In the next section we discuss how to construct this differential graded algebra.

\section{The case of quasi-hereditary algebras}\label{qh-case}

In this section we turn our attention again to quasi-hereditary algebras, which is the motivating instance of the proposed theory of basic regular exact subalgebras. For one of the simplest examples of a quasi-hereditary algebra, let $A=\Bbbk Q/I$ be the quotient of the path algebra of an acyclic quiver by an admissible ideal $I$. Because the quiver $Q$ is acyclic, one can define a partial order on the vertex set $Q_0$, which also indexes the isomorphism classes of simple modules, by $\mathtt{i}\leq \mathtt{j}$ if there is a path from $\mathtt{i}$ to $\mathtt{j}$ in $Q$. One readily observes that with respect to this partial order, the standard modules are actually simple, hence the algebra is filtered. It follows that $(A,\leq)$ is a special kind of quasi-hereditary algebra which is called a \textbf{directed algebra}. Without mentioning of a partial order, this notion is better known under the name of a triangular algebra. It is not to be confused with the notion of a representation-directed algebra, which is equivalent to the algebra being representation-finite and its Auslander algebra being triangular. More generally, let $A\subseteq R$ be a basic regular exact subalgebra with respect to the standard modules $\Delta_\mathtt{i}$. Because $R$ is quasi-hereditary, the standard modules $\Delta$ form an exceptional collection, in particular $\Ext^1_R(\Delta_\mathtt{i},\Delta_\mathtt{j})\neq 0$ implies $i\leq j$. Because of the isomorphism condition in the definition of a regular exact subalgebra, 
\[\Ext^1_A(L_\mathtt{i}, L_\mathtt{j})\cong \Ext^1_R(\Delta_\mathtt{i}, \Delta_\mathtt{j})\]
also the simple $A$-modules form an exceptional collection. Since extensions between simple modules correspond to arrows in the quiver of $A$, it follows that the Gabriel quiver of $A$ is acyclic with arrows between the module only going in increasing direction, i.e. the algebra $A$ is directed. As a consequence, a regular exact subalgebra with respect to the standard modules is in particular an exact Borel subalgebra in the sense of \cite{Koe95}.

\begin{definition}
Let $R$ be a quasi-hereditary algebra. A subalgebra $A\subseteq R$ is called an \textbf{exact Borel subalgebra} if $A$ has the same number of simple modules as $R$ and the following three conditions are satisfied:
\begin{itemize}
\item $A$ is directed,
\item $R$ is projective as a right $A$-module,
\item $R\otimes_A L_\mathtt{i}^A \cong \Delta_\mathtt{i}^R$.
\end{itemize}
\end{definition} 

The careful reader will have noticed that there are now potentially two different partial orders on an exact Borel subalgebra of a quasi-hereditary algebra. One is induced from the partial order on the quasi-hereditary algebra $R$ and one comes from noticing that the Gabriel quiver of $A$ is acyclic and defining the partial order via existence of paths as above. However, both partial orders yield the same standard modules, namely the simples. This is a general phenomenon for quasi-hereditary algebras and it can therefore be useful to study quasi-hereditary structures on an algebra up to the equivalence relation that they define the same standard modules. It is sometimes convenient to work with particularly well-behaved orders in an equivalence class. A common one are the total orders in an equivalence class obtained by arbitrarily refining any partial order in the equivalence class. Another one is the essential order which is the coarsest partial order in an equivalence class. 

\begin{definition}[{\cite[Definition 1.2.5]{Cou20}}]
Let $(R,\leq)$ be a quasi-hereditary algebra. Then the \textbf{essential partial order} for $R$ is the partial order induced by $\mathtt{i}<_{\ess}\mathtt{j}$ if $[\Delta_\mathtt{j}:L_\mathtt{i}]\neq 0$ or $(P_\mathtt{i}:\Delta_\mathtt{j})\neq 0$, where the latter denotes the number of times $\Delta_\mathtt{j}$ appears as a subquotient in any $\Delta$-filtration of $P_\mathtt{i}$. 
\end{definition} 

A result by Mazorchuk, which we include with his permission here, is that the essential order can equivalently be understood in terms of the $\Ext$-algebra of the standard modules.

\begin{lemma}\label{walter's-lemma}
The essential partial order for a quasi-hereditary algebra $R$ can equivalently be defined as the partial order induced by $\mathtt{i}<_{\ess} \mathtt{j}$ if $\Hom_R(\Delta_\mathtt{i},\Delta_\mathtt{j})\neq 0$ or $\Ext^1_R(\Delta_\mathtt{i},\Delta_\mathtt{j})\neq 0$. 
\end{lemma}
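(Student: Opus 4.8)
The plan is to show that the two relations defining the respective preorders (before taking transitive-reflexive closure) generate the same preorder, which forces the partial orders to coincide. Write $\mathtt{i} \prec_1 \mathtt{j}$ for the relation ``$[\Delta_\mathtt{j}:L_\mathtt{i}]\neq 0$ or $(P_\mathtt{i}:\Delta_\mathtt{j})\neq 0$'' and $\mathtt{i}\prec_2\mathtt{j}$ for ``$\Hom_R(\Delta_\mathtt{i},\Delta_\mathtt{j})\neq 0$ or $\Ext^1_R(\Delta_\mathtt{i},\Delta_\mathtt{j})\neq 0$''. It suffices to prove: (a) $\mathtt{i}\neq\mathtt{j}$ and $\mathtt{i}\prec_2\mathtt{j}$ implies $\mathtt{i}<_{\ess}\mathtt{j}$; and (b) $\mathtt{i}\neq\mathtt{j}$ and $\mathtt{i}\prec_1\mathtt{j}$ implies $\mathtt{i}\leq_{\ess}\mathtt{j}$ in the order generated by $\prec_2$. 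Here I will use repeatedly that the standard modules form an exceptional collection, so that $\Hom_R(\Delta_\mathtt{i},\Delta_\mathtt{j})\neq 0$ or $\Ext^1_R(\Delta_\mathtt{i},\Delta_\mathtt{j})\neq 0$ already forces $\mathtt{i}\leq\mathtt{j}$ for the defining order, and that $\End_R(\Delta_\mathtt{i})$ is a division ring (so $\Hom_R(\Delta_\mathtt{i},\Delta_\mathtt{i})\neq 0$ trivially, which is why the diagonal must be excluded).

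For direction (a): if $\Hom_R(\Delta_\mathtt{i},\Delta_\mathtt{j})\neq 0$ with $\mathtt{i}\neq\mathtt{j}$, then since $\Delta_\mathtt{i}$ has simple top $L_\mathtt{i}$ and the image of a nonzero map is a nonzero submodule of $\Delta_\mathtt{j}$ with top a quotient of $L_\mathtt{i}$, we get $[\Delta_\mathtt{j}:L_\mathtt{i}]\neq 0$, hence $\mathtt{i}<_{\ess}\mathtt{j}$ (the strictness using $\mathtt{i}\neq\mathtt{j}$). If instead $\Ext^1_R(\Delta_\mathtt{i},\Delta_\mathtt{j})\neq 0$, consider a non-split extension $0\to\Delta_\mathtt{j}\to E\to\Delta_\mathtt{i}\to 0$; projecting the projective cover $P_\mathtt{i}\twoheadrightarrow\Delta_\mathtt{i}$ through $E$ and using that $\Ext^1_R(\Delta_\mathtt{i},\Delta_\mathtt{j})$ is computed from the kernel of $P_\mathtt{i}\to\Delta_\mathtt{i}$, one sees that $\Delta_\mathtt{j}$ must appear in a $\Delta$-filtration of $P_\mathtt{i}$ (this is the standard fact that $\operatorname{rad}(P_\mathtt{i})$, more precisely $\ker(P_\mathtt{i}\to\Delta_\mathtt{i})$, is $\Delta$-filtered with subquotients $\Delta_\mathtt{k}$ for $\mathtt{k}>\mathtt{i}$, and $\Ext^1_R(\Delta_\mathtt{i},-)$ detects exactly those); hence $(P_\mathtt{i}:\Delta_\mathtt{j})\neq 0$ and $\mathtt{i}<_{\ess}\mathtt{j}$.

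For direction (b): if $[\Delta_\mathtt{j}:L_\mathtt{i}]\neq 0$ with $\mathtt{i}\neq\mathtt{j}$, I would use the BGG-type reciprocity / the characterisation that composition factors of $\Delta_\mathtt{j}$ below the top can be linked to $\mathtt{j}$ by a chain of nonzero $\Hom$'s or $\Ext^1$'s between standards: concretely, a composition series of $\Delta_\mathtt{j}$ together with the fact that each layer $L_\mathtt{k}$ extends to a standard subquotient in a $\Delta$-filtration of $\operatorname{rad}$-type modules gives a chain $\mathtt{i}=\mathtt{k}_0,\dots,\mathtt{k}_r=\mathtt{j}$ with consecutive nonzero $\Hom$ or $\Ext^1$ between the corresponding $\Delta$'s, so $\mathtt{i}\leq_{\ess}\mathtt{j}$ in the $\prec_2$-generated order. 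Symmetrically, if $(P_\mathtt{i}:\Delta_\mathtt{j})\neq 0$, then walking down a $\Delta$-filtration of $P_\mathtt{i}$ and using that adjacent $\Delta$-layers in such a filtration are joined by a nonzero $\Ext^1$ (non-split, else the filtration would split off a summand, contradicting indecomposability of $P_\mathtt{i}$ unless it is at the top), together with the top layer being $\Delta_\mathtt{i}$, produces again a $\prec_2$-chain from $\mathtt{i}$ to $\mathtt{j}$.

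The main obstacle is direction (b), and within it the case $[\Delta_\mathtt{j}:L_\mathtt{i}]\neq 0$: one needs to convert ``$L_\mathtt{i}$ is a composition factor of $\Delta_\mathtt{j}$'' into a chain of $\Hom/\Ext^1$ relations among standard modules, which is not literally immediate since a composition factor deep inside $\Delta_\mathtt{j}$ need not give a map from $\Delta_\mathtt{i}$. The cleanest way around this is probably to induct on $\ell\ell(\Delta_\mathtt{j})$ using the short exact sequence $0\to N\to\Delta_\mathtt{j}\to L_\mathtt{j}\to 0$... rather, to filter $\operatorname{rad}\Delta_\mathtt{j}$ by standard modules is false, so instead I would quotient $\Delta_\mathtt{j}$ by the trace of projectives/standards appropriately, or appeal directly to the known fact (e.g.\ via the partial order being a quasi-hereditary datum) that the ``composition-factor'' relation on standards is subordinate to the $\Ext^1$-relation after transitive closure; the honest argument is to note $[\Delta_\mathtt{j}:L_\mathtt{i}]\neq 0$, $\mathtt{i}\neq\mathtt{j}$ implies there is $\mathtt{k}\neq\mathtt{j}$ with $\Ext^1_R(L_\mathtt{k},\text{quotient of }\Delta_\mathtt{j})\neq 0$ and $L_\mathtt{i}$ a composition factor of $\Delta_\mathtt{k}$, then recurse — and to make this yield $\Hom/\Ext^1$ between standards rather than simples requires lifting along projective covers of standards, which is where the bookkeeping lives. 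I would present this recursion carefully; everything else is routine highest-weight category manipulation.
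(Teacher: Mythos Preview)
Your overall structure---show that each generating relation for one preorder is dominated by the transitive closure of the other---is exactly what the paper does. Direction~(a) is essentially fine, with one caveat: from $\Ext^1_R(\Delta_\mathtt{i},\Delta_\mathtt{j})\neq 0$ you assert $(P_\mathtt{i}:\Delta_\mathtt{j})\neq 0$ directly, but this is not obvious. What you actually get is $\Hom_R(K_\mathtt{i},\Delta_\mathtt{j})\neq 0$ for $K_\mathtt{i}=\ker(P_\mathtt{i}\twoheadrightarrow\Delta_\mathtt{i})$, and since $K_\mathtt{i}\in\mathcal{F}(\Delta)$ this only gives some $\mathtt{k}$ with $(P_\mathtt{i}:\Delta_\mathtt{k})\neq 0$ and $\Hom_R(\Delta_\mathtt{k},\Delta_\mathtt{j})\neq 0$; the paper then uses transitivity through $\mathtt{k}$. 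This is a small repair.

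The real gap is in direction~(b), the case $[\Delta_\mathtt{j}:L_\mathtt{i}]\neq 0$, which you correctly identify as the obstacle but then attack from the wrong side. You try to descend through a composition series of $\Delta_\mathtt{j}$ and lift simple composition factors back to standards, which as you note runs into the problem that $\rad\Delta_\mathtt{j}$ is not $\Delta$-filtered. The paper's move is much cleaner and avoids this entirely: rewrite $[\Delta_\mathtt{j}:L_\mathtt{i}]=\dim\Hom_R(P_\mathtt{i},\Delta_\mathtt{j})$ and filter the \emph{source} $P_\mathtt{i}$, not the target $\Delta_\mathtt{j}$. Since $P_\mathtt{i}\in\mathcal{F}(\Delta)$, a nonzero $\Hom_R(P_\mathtt{i},\Delta_\mathtt{j})$ forces some filtration factor $\Delta_\mathtt{k}$ of $P_\mathtt{i}$ to admit a nonzero map to $\Delta_\mathtt{j}$, i.e.\ $(P_\mathtt{i}:\Delta_\mathtt{k})\neq 0$ and $\Hom_R(\Delta_\mathtt{k},\Delta_\mathtt{j})\neq 0$. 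This reduces immediately to the $(P_\mathtt{i}:\Delta_\mathtt{j})$-case you already handled, with one extra $\Hom$-step appended. No recursion on Loewy length or bookkeeping is needed.
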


\begin{proof}
We first want to show that if $\Hom_R(\Delta_\mathtt{i},\Delta_\mathtt{j})\neq 0$ or $\Ext^1_R(\Delta_\mathtt{i},\Delta_\mathtt{j})\neq 0$, then $\mathtt{i}<_{\ess} \mathtt{j}$. If $\Hom_R(\Delta_\mathtt{i},\Delta_\mathtt{j})\neq 0$, then composition of a non-zero map in this space with the projection $P_\mathtt{i}\twoheadrightarrow \Delta_\mathtt{i}$ yields a non-zero map $P_\mathtt{i}\to \Delta_\mathtt{j}$, i.e. that $[\Delta_\mathtt{j}:L_\mathtt{i}]\neq 0$. On the other hand, assuming that $\Ext^1_R(\Delta_\mathtt{i},\Delta_\mathtt{j})\neq 0$, let $K_\mathtt{i}$ be the kernel of the projection map $P_\mathtt{i}\twoheadrightarrow \Delta_\mathtt{i}$. The assumed non-vanishing implies that $\Hom_R(K_\mathtt{i},\Delta_\mathtt{j})\neq 0$. This means that there exists $\mathtt{k}$ such that $\Delta_\mathtt{k}$ is a subquotient of $K_\mathtt{i}$ and $\Hom_R(\Delta_\mathtt{k},\Delta_\mathtt{j})\neq 0$, which by the earlier case implies $(P_\mathtt{i}:\Delta_\mathtt{k})\neq 0$ and $[\Delta_\mathtt{j}:L_\mathtt{k}]\neq 0$. It follows that $\mathtt{i}\leq_{\ess} \mathtt{k}$ and $\mathtt{k}\leq_{\ess} \mathtt{j}$, whence $\mathtt{i}\leq_{\ess} \mathtt{j}$. 
Conversely, we have to show that if $[\Delta_\mathtt{j}:L_\mathtt{i}]\neq 0$ or $(P_\mathtt{i}:\Delta_\mathtt{j})\neq 0$, then there exists a sequence of non-zero homomorphisms and extensions between standard modules from $\Delta_\mathtt{i}$ to $\Delta_\mathtt{j}$. Suppose first that $(P_\mathtt{i}:\Delta_\mathtt{j})\neq 0$, this means that $\Delta_\mathtt{j}$ appears as a subfactor in a $\Delta$-filtration of $P_\mathtt{i}$. Since any $\Delta$-filtration of $P_\mathtt{i}$ has to have $\Delta_\mathtt{i}$ as a top filtration factor, this sequence comes with a non-vanishing sequence of extensions between standard modules from $\Delta_\mathtt{i}$ to $\Delta_\mathtt{j}$ (cf. the description of the category of $\Delta$-filtered modules via twisted modules). On the other hand, assuming $[\Delta_\mathtt{j}:L_\mathtt{i}]=\dim\Hom_R(P_\mathtt{i},\Delta_\mathtt{j})\neq 0$, by a long exact sequence argument implies that there exists $\mathtt{k}$ with $(P_\mathtt{i}:\Delta_\mathtt{k})\neq 0$ and $\Hom_R(\Delta_\mathtt{k},\Delta_\mathtt{j})\neq 0$, by the preceding case, the first one implies that there is a sequence of non-vanishing extensions between standards from $\Delta_\mathtt{i}$ to $\Delta_\mathtt{k}$. The claim follows. 
\end{proof}

Together with Conde we noted that one can easily define examples where the notion of being an exact Borel subalgebra depends on the chosen partial order within an equivalence class. However, this cannot happen for regular exact Borel subalgebra. 

One of the main results of our joint work with Koenig and Ovsienko was the following existence result for Borel subalgebras.

\begin{theorem}
Let $\Bbbk$ be an algebraically closed field. Then for every quasi-hereditary algebra $\Lambda$ there is a Morita equivalent quasi-hereditary algebra $R$ which has a basic normal regular exact Borel subalgebra.
\end{theorem}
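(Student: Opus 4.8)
The plan is to realise the exact category $\mathcal{F}(\Delta)$ of $\Delta$-filtered modules as a category of twisted modules over an $A_\infty$-algebra, truncate that $A_\infty$-structure to its lowest degrees, and then pass through $A_\infty$-Koszul duality and Roiter's equivalence to the desired ring extension $A\subseteq R$. Since the basic representative of a quasi-hereditary algebra is again quasi-hereditary, I may assume $\Lambda$ is basic. Write $\Delta=\bigoplus_{\mathtt{i}=\mathtt{1}}^{\mathtt{n}}\Delta_\mathtt{i}$ and let $\mathcal{E}=\Ext^*_\Lambda(\Delta,\Delta)$, carrying the minimal $A_\infty$-structure obtained from Kadeishvili's theorem applied to the dg endomorphism algebra $\End_\Lambda(P^*)$ of a projective resolution $P^*$ of $\Delta$. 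Two structural facts underpin the construction. First, since the standard modules form an exceptional collection and $\Bbbk$ is algebraically closed, $\mathcal{E}^0=\Hom_\Lambda(\Delta,\Delta)$ is a directed algebra with $\mathcal{E}^0/\rad\mathcal{E}^0\cong\prod_{\mathtt{i}=\mathtt{1}}^{\mathtt{n}}\Bbbk$, and $\Ext^1_\Lambda(\Delta_\mathtt{i},\Delta_\mathtt{j})=0$ unless $\mathtt{i}<\mathtt{j}$. Second, since $\mathcal{F}(\Delta)$ is a resolving subcategory, $\Ext^{\geq 1}_\Lambda(\Delta,\Delta)$ is generated in degree $1$ as an $A_\infty$-algebra.

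The key observation is that, inspecting the degrees in the defining formulas, the equivalence $\mathcal{F}(\Delta)\cong\twmod\mathcal{E}$ uses only the part of the $A_\infty$-structure whose inputs lie in degrees $0$ and $1$ and whose outputs lie in degrees $0$, $1$, $2$. I would extract exactly this truncated datum and argue that it is precisely the data of a directed box in the sense of Burt--Butler, equivalently, by Roiter's equivalence, a differential graded structure on a tensor algebra $T_A(\overline{V})$ over a basic directed algebra $A$. Concretely: dualising the operations $m_n\colon\Ext^1_\Lambda(\Delta,\Delta)^{\otimes n}\to\Ext^2_\Lambda(\Delta,\Delta)$ via $A_\infty$-Koszul duality produces the basic directed algebra $A=\Bbbk Q/I$ with $\mathtt{n}$ simple modules, where $\Bbbk Q=T_L(\mathbb{D}\Ext^1_\Lambda(\Delta,\Delta))$ and $I$ is generated by $\image\sum m_n^\#$, and for which $\Ext^{\geq 1}_A(L,L)\cong\Ext^{\geq 1}_\Lambda(\Delta,\Delta)$ as $A_\infty$-algebras; the operations having exactly one input coming from $\rad\mathcal{E}^0$ and the remaining inputs in degree $1$ assemble into a finitely generated projective $A$-bimodule $\overline{V}$ and a degree-one map extending uniquely to a derivation $\partial$ on $T_A(\overline{V})$. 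Verifying that $\partial^2=0$ together with the graded Leibniz rule — i.e. that the Koszul signs and the $A_\infty$-relations for $\mathcal{E}$ in low degrees conspire to produce a genuine dg tensor algebra — is the technical heart of the argument and the step I expect to be the main obstacle.

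Once $(T_A(\overline{V}),\partial)$ is available, Roiter's equivalence turns it into an $A$-coring $V=A\oplus\overline{V}$ equipped with a group-like element $\omega$, and since $\overline{V}$ is finitely generated projective as a one-sided module, dualising yields a ring extension $A\subseteq R$ with $R:=\Hom_{A^{\op}}(V,A)$ projective as a right $A$-module and $\varepsilon$ surjective. The group-like element makes the extension normal, directedness of $A$ was arranged above, and by construction $R\otimes_A L_\mathtt{i}\cong\Delta_\mathtt{i}$; moreover the theory of directed boxes shows that $R$ is quasi-hereditary with the modules $R\otimes_A L_\mathtt{i}$ as standard modules and that $\mathcal{F}(\Delta^R)\cong\twmod\mathcal{E}\cong\mathcal{F}(\Delta)$ as exact categories, matching $\Delta^R_\mathtt{i}$ with $\Delta_\mathtt{i}$. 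Hence $A$ is an exact Borel subalgebra of $R$; and since the exact category $\mathcal{F}(\Delta)$ has the projective $\Lambda$-modules as projective generators and recovers $\Lambda$ up to Morita equivalence as their endomorphism algebra, the same computation for $\mathcal{F}(\Delta^R)$ shows $R$ is Morita equivalent to $\Lambda$ as a quasi-hereditary algebra. It remains to check regularity, that the maps $\Ext^m_A(L_\mathtt{i},L_\mathtt{j})\to\Ext^m_R(\Delta_\mathtt{i},\Delta_\mathtt{j})$ are isomorphisms for all $m\geq 1$: both sides have been identified with $\Ext^m_\Lambda(\Delta_\mathtt{i},\Delta_\mathtt{j})$ — the right-hand side via $\mathcal{F}(\Delta^R)\simeq\mathcal{F}(\Delta)$ and the computation of $\Ext^*$ of the box as $H^*(T_A(\overline{V}),\partial)\cong\mathcal{E}$, the left-hand side via the $A_\infty$-Koszul duality used to define $A$ — and one verifies, functorially, that the comparison map is this identification. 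I expect this last compatibility check, together with the $\partial^2=0$ verification, to absorb most of the work.
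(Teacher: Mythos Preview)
Your proposal is correct and follows essentially the same route as the paper's sketch: compute the $A_\infty$-structure on $\Ext^*_\Lambda(\Delta,\Delta)$, use its low-degree part to build a basic directed algebra $A$ (from the duals of $m_n\colon(\Ext^1)^{\otimes n}\to\Ext^2$) together with a dg structure on $T_A(\overline{V})$ (from the operations involving $\rad\Hom_\Lambda(\Delta,\Delta)$), pass through Roiter's equivalence to an $A$-coring with group-like, and take its right dual to obtain the normal regular exact Borel extension $A\subseteq R$. The only cosmetic difference is that the paper first phrases the construction as bar construction over $L$, graded dual, then truncation by the differential ideal generated by negative degrees, and only afterwards explains that this amounts precisely to the direct low-degree extraction you describe; your identification of the $\partial^2=0$ check and the regularity compatibility as the technical core is accurate.
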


The construction starts with computing the $A_\infty$-structure on $\Ext^*_\Lambda(\Delta,\Delta)$. To produce a differential graded algebra, one first applies the bar construction (over the product of the scalar multples of the identities) to this $A_\infty$-algebra. This produces a differential graded structure on its tensor coalgebra (whose differential is induced by the sum of all the $m_n$, shifted appropriately so that they are of degree $1$). Taking its graded dual, one obtains a differential graded algebra. However, it will usually not be a tensor algebra over its degree zero part, so we can't immediately apply Roiter's equivalence. An extra truncation step is required where one quotients by the differential ideal generated by all the negative degree parts. This indeed yields a tensor algebra over the degree zero part, to which we apply Roiter's equivalence to obtain a coring whose right algebra is the Morita equivalent algebra $R$.  

As mentioned before, the low degree parts of the $\Ext$-algebra are sufficient in particular to construct the exact category $\mathcal{F}(\Delta)$ as a category of twisted modules. It should therefore not come as a surprise that these parts are also sufficient to reconstruct the extension $A\subseteq R$ (or equivalently the differential graded structure on $T_A(\overline{V})$). As in the case of simple modules, the sum of the duals of the $A_\infty$-multiplications
\[m_n\colon (\Ext^1_\Lambda(\Delta,\Delta))^{\otimes n}\to \Ext^2_\Lambda(\Delta,\Delta)\]
provides a presentation of the algebra $A$ as a quotient of the tensor algebra (over a product of fields) of $\mathbb{D}\Ext^1_\Lambda(\Delta,\Delta)$. In contrast to the case of simple modules one has to take care of the (radical) homomorphisms between standard modules as well. In fact, $\overline{V}$ is the projective $A$-$A$-bimodule generated by  the radical maps $\rad_\Lambda(\Delta,\Delta)$ and the differential on $T_A(\overline{V})$ is determined by two maps, $\partial_0\colon A\to \overline{V}$ and $\partial_1\colon \overline{V}\to \overline{V}\otimes \overline{V}$. The former is induced from the duals of the higher multiplications 
\[m_n\colon \bigoplus_{i+j=n-1} (\Ext^1_\Lambda(\Delta,\Delta))^{\otimes i}\otimes \Hom_\Lambda(\Delta,\Delta)\otimes (\Ext^1_\Lambda(\Delta,\Delta))^{\otimes j}\to \Ext^1_\Lambda(\Delta,\Delta),\]
the latter by the duals of 
\[m_n\colon \bigoplus_{i+j+k=n-2} (\Ext^1_\Lambda)^{\otimes i}\otimes \Hom_\Lambda\otimes (\Ext^1_\Lambda)^{\otimes j}\otimes \Hom\otimes (\Ext^1)^{\otimes k}\to \Hom_\Lambda,\]
where as before all homomorphisms and extensions are between $\Delta$. 

If one wants to more generally construct a regular exact subalgebra for a collection of modules, one would expect that the same construction works. However, it is easy to construct counterexamples where the resulting $A$ is not finite-dimensional, like the following:

\begin{example}
Let $\Lambda=\Bbbk(\begin{tikzcd}\mathtt{1}\arrow[yshift=0.5ex]{r}\arrow[yshift=-0.5ex]{r}&\mathtt{2}\end{tikzcd})$ be the path algebra of the Kronecker quiver. Let $M_\mathtt{1}$ be the simple projective module and let $M_\mathtt{2}$ be a quasi-simple module. Then, the $\Ext^1$-quiver between the $M_\mathtt{i}$ looks as follows
\[
\begin{tikzcd}
M_\mathtt{1}&M_\mathtt{2}\arrow[loop right]{r}\arrow{l}
\end{tikzcd}
\]
Since $\Ext^2(M,M)=0$, the resulting $A$ would be the path algebra of this non-acyclic quiver, in particular infinite-dimensional. 
\end{example}

The reason why the construction works in the quasi-hereditary case, and is relatively easy to understand is that the $\Delta_\mathtt{i}$ form an exceptional collection and therefore even the bar construction of the $\Ext$-algebra of the standard modules (relative to the product of scalar multiples of the identities) is finite-dimensional. My guess is that the fact that $A$ is infinite-dimensional in the example is related to the non-functorially finiteness of $\mathcal{F}(M)$. It would be interesting to have a general answer to the question of when it is possible to construct a regular exact subalgebra for a collection of modules. For this, one would in particular need to answer the following question.

\begin{question}
Let $\mathcal{C}$ be an exact (length) category. Under which conditions is the $A_\infty$-Koszul dual of $\Ext^{\geq 1}_{\mathcal{C}}(M,M)$, where $M$ denotes the direct sum of the simple objects in $\mathcal{C}$, finite-dimensional? 
\end{question}

Returning to the situation for quasi-hereditary algebras, in joint work with Vanessa Miemietz, we were able to also prove uniqueness of basic regular exact Borel subalgebras up to isomorphism. Our proof is quite involved, the basic idea is that given choices of presentations for $A$ and $V$, to construct an $A_\infty$-structure on $\Ext^*_\Lambda(\Delta,\Delta)$ such that when applying the construction yielding existence to it, one gets back $A$ and $V$ (with the chosen presentations). Once this is done, one can invoke Kadeishvili's uniqueness theorem for the $A_\infty$-structure on the $\Ext^*_\Lambda(\Delta,\Delta)$. 

\begin{theorem}
Let $\Bbbk$ be an algebraically closed field. Let $A\subseteq R$ be a regular exact Borel subalgebra and let $B\subseteq S$ be a regular exact Borel subalgebra such that $R$ is Morita equivalent to $S$ as a quasi-hereditary algebra. Then there is an isomorphism $f\colon R\to S$ sending $A$ to $B$. 
\end{theorem}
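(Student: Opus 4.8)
The plan is to reduce the statement to Kadeishvili's uniqueness theorem for the minimal $A_\infty$-structure on the $\Ext$-algebra of the standard modules. First I would observe that, since $R$ and $S$ are Morita equivalent as quasi-hereditary algebras, there is an equivalence of module categories carrying $\Delta^R$ to $\Delta^S$; hence the dg endomorphism algebra $\mathcal{D}$ of a projective resolution of $\bigoplus_\mathtt{i}\Delta_\mathtt{i}$ is the same for both up to dg quasi-isomorphism, and its cohomology $\mathcal{E}=\Ext^*(\Delta,\Delta)$ is a fixed graded algebra carrying a minimal $A_\infty$-structure that, by Kadeishvili, is unique up to $A_\infty$-isomorphism. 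We may assume throughout that $A$ and $B$ are basic and that the extensions are normal, as is implicit in the notion of a bound quiver; normality provides the group-like elements needed to apply Roiter's equivalence. The regularity isomorphisms $\Ext^*_A(L,L)\cong\Ext^*_R(\Delta^R,\Delta^R)$ and $\Ext^*_B(L,L)\cong\Ext^*_S(\Delta^S,\Delta^S)$ of $A_\infty$-algebras then let us view all the data attached to $A\subseteq R$ and $B\subseteq S$ as living inside this single algebra $\mathcal{E}$.

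The heart of the argument is to run the existence construction of Section~\ref{regular-exact-subalgebras} and Section~\ref{qh-case} backwards. Given $A\subseteq R$, form the dual coring $V=\Hom_{A^{\op}}(R,A)$; by Roiter's equivalence the pair, together with its group-like element, is the same datum as a differential graded structure on $T_A(\overline{V})$. Fixing once and for all presentations of $A$ as $\Bbbk Q/I$ and of $V$ identifies, via the $\mathbb{D}\Hom$/$\mathbb{D}\Ext^1$/$\mathbb{D}\Ext^2$ lemma, the low-degree part of the sought-after $A_\infty$-structure---the components $m_n$ with inputs in degrees $0$ and $1$ and outputs in degrees $0$, $1$ and $2$---with the differentials $\partial_0\colon A\to\overline{V}$ and $\partial_1\colon\overline{V}\to\overline{V}\otimes\overline{V}$. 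I would then extend this rigid bottom datum to a full minimal $A_\infty$-structure $m^A$ on $\mathcal{E}$ (for instance by producing an explicit dg model out of $T_A(\overline{V})$ quasi-isomorphic to $\mathcal{D}$ and applying a Merkulov-type minimal-model construction that leaves the chosen bottom part fixed), with the property that feeding $(\mathcal{E},m^A)$ through the bar construction, graded dualisation, truncation by negative degrees and Roiter's equivalence returns exactly $A\subseteq R$ with the chosen presentations; here the role of the truncation is precisely to discard the auxiliary higher data that was a free choice. Running the same construction for $B\subseteq S$ yields a second minimal $A_\infty$-structure $m^B$ on $\mathcal{E}$.

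Now $m^A$ and $m^B$ are both minimal models of $\mathcal{D}$, so Kadeishvili's theorem, together with the fact that an $A_\infty$-quasi-isomorphism between minimal $A_\infty$-algebras is an $A_\infty$-isomorphism, provides an $A_\infty$-isomorphism $\varphi\colon(\mathcal{E},m^A)\to(\mathcal{E},m^B)$, which I would arrange to be strictly unital and augmented. Since the bar construction, graded dualisation and Roiter's equivalence are all equivalences of categories, $\varphi$ transports to an isomorphism of the associated corings together with their group-like elements, once one checks compatibility with the truncation step; passing to right algebras then yields an algebra isomorphism $f\colon R\to S$, and compatibility with the group-like elements forces $f(A)=B$.

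The main obstacle is the second paragraph: realising a prescribed regular exact Borel subalgebra, with its presentation data, as the output of the existence construction applied to an honest $A_\infty$-structure, and then ensuring that the $A_\infty$-isomorphism $\varphi$ survives the truncation. Unlike the bar construction and Roiter's equivalence, truncation---quotienting a dg algebra by the differential ideal generated by its strictly negative part---is not an equivalence, so one has to verify that the forced low-degree data genuinely extends to an $A_\infty$-structure, that nothing needed to recover $R$ from $A\oplus\overline{V}$ is destroyed, and that morphisms descend through the quotient; keeping track of the unit, the augmentation and the grading coming from the exceptional collection of standard modules throughout all of these steps is what makes the argument long.
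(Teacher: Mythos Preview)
Your proposal follows essentially the same strategy the paper sketches: given $A\subseteq R$ with chosen presentations of $A$ and $V$, construct a minimal $A_\infty$-structure on $\Ext^*_\Lambda(\Delta,\Delta)$ whose image under the existence construction (bar, dualise, truncate, Roiter) returns exactly that data, then invoke Kadeishvili's uniqueness to compare with the structure coming from $B\subseteq S$. You also correctly isolate the genuine difficulty---that truncation is not an equivalence, so one must show the prescribed low-degree data extends to an honest $A_\infty$-structure and that the resulting $A_\infty$-isomorphism descends---which is precisely why the paper calls the proof ``quite involved''.
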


Comparing to the situation of a regular exact subalgebra for the collection of projective modules, i.e. a maximal semisimple subalgebra, the Wedderburn--Malcev theorem shows uniqueness not only up to isomorphism, but when considering two maximal semisimple subalgebras of the same algebra, uniqueness up to conjugation. We expect such a result to hold more generally in our setup.

\begin{conjecture}
Let $\Bbbk$ be an algebraically closed field. Let $A,B\subseteq R$ be basic regular exact Borel subalgebras. Then there exists an invertible element $x\in R$ such that $x^{-1}Ax=B$. 
\end{conjecture}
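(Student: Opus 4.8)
The plan is to reduce the conjecture to a statement about the differential graded algebra $T_A(\overline V)$ and then invoke a suitable uniqueness-up-to-homotopy result, mimicking the passage from ``uniqueness up to isomorphism'' to ``uniqueness up to conjugation'' in the classical Wedderburn--Malcev theorem. First I would fix $R$ and let $A, B \subseteq R$ be two basic regular exact Borel subalgebras. By the preceding theorem there is already an algebra automorphism $f \colon R \to R$ with $f(A) = B$, so the problem is to show that $f$ can be chosen to be inner, equivalently that any automorphism of $R$ fixing the quasi-hereditary structure and carrying $A$ to $B$ differs from an inner one by an automorphism that fixes $A$ setwise. Thus the real content is: two embeddings $A \hookrightarrow R$ and $B \hookrightarrow R$ presenting the same bound quiver data are conjugate. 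Via Roiter's equivalence and the duality of Section~\ref{regular-exact-subalgebras}, each embedding corresponds to a group-like element $\omega$ (equivalently a choice of $V$-comodule structure on $A$, equivalently a base point) together with the induced differential graded structure on $T_A(\overline V)$; conjugating the embedding by an invertible $x \in R$ corresponds to replacing $\omega$ by the group-like element obtained by the ``gauge transformation'' associated to $x$. So the conjecture becomes: any two group-like elements of the coring $V$ giving rise to $A_\infty$-isomorphic (indeed, here, isomorphic) differential graded tensor algebras are related by a gauge transformation coming from an invertible element of $R = \Hom_A(V,A)$.

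The key steps, in order, would be: (1) translate the two exact Borel subalgebras into two group-like elements $\omega, \omega'$ in the common coring $V$ (or two base points on a common curved dg tensor algebra), using that $R$ Morita equivalent to itself as quasi-hereditary algebra forces the underlying coring to be the same up to the ambiguity we are trying to control; (2) show that the difference $\omega - \omega'$ lies in $\overline V = \ker \varepsilon$ and, using regularity (the condition that $a\omega - \omega a \in \rad \overline V$ for $a \in \rad A$) together with normality, set up a filtration-by-radical-powers argument; (3) inductively solve, modulo successive radical powers, for an invertible element $x \in R$ implementing the gauge transformation sending $\omega$ to $\omega'$ — at each stage the obstruction to extending the solution one step further lives in a cohomology group which one argues vanishes, exactly as in the inductive proof of Wedderburn--Malcev where one lifts a splitting modulo $\rad^n$ to $\rad^{n+1}$; (4) assemble the compatible partial solutions into a genuine invertible $x \in R$ (the filtration is finite since $A$ and $R$ are finite-dimensional, so no completion issues arise) and conclude $x^{-1} A x = B$. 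The dg/$A_\infty$ reformulation is what makes step (3) tractable: the relevant obstruction spaces are Hochschild-type cohomology groups for the coring (Cartier cohomology, cf.\ the remarks after the definition of ``normal'' and \cite[30.16]{BW03}), and one wants to show the first such group vanishes in the range that matters, which plausibly follows from the exceptional-collection property of the standard modules and the directedness of $A$.

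The main obstacle I expect is step (3), and more precisely pinning down the correct cohomology theory and proving the relevant vanishing. Unlike in classical Wedderburn--Malcev, where one is lifting a bimodule splitting and the obstructions sit in ordinary Hochschild cohomology $H^2(A/\rad A, \rad A)$ which vanishes by separability, here one is conjugating inside the larger algebra $R$ while preserving a filtration, and the natural home for the obstruction is a \emph{relative} Hochschild cohomology for the extension $A \subseteq R$ (or, dually, a Cartier/coalgebra cohomology for $V$ with its group-like element). Establishing that this group vanishes — or at least vanishes in the degree controlling gauge transformations of group-like elements — is exactly the ingredient the paper flags as ``currently open'' (``It is currently open whether if $R=S$, the algebra automorphism can be given by conjugation by an invertible element''), so I would expect this to be the crux where a genuinely new argument is needed rather than a routine adaptation. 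A secondary subtlety is making sure the gauge transformations arising abstractly on the dg/coring side are all realised by honest \emph{invertible} elements of $R$ and not merely by morphisms of twisted modules; one should check that the relevant automorphisms of $T_A(\overline V)$ fixing $A$ correspond under Roiter's equivalence precisely to conjugations in $R^\times$, which is a finite-dimensionality-plus-normality bookkeeping matter but needs to be done carefully.
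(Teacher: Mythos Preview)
The statement is a \emph{conjecture}, not a theorem: the paper offers no proof and explicitly says immediately beforehand that ``it is currently open whether if $R=S$, the algebra automorphism can be given by conjugation by an invertible element.'' There is therefore nothing in the paper to compare your proposal against.

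That said, your write-up is not really a proof but a research outline, and you are candid about this: you correctly locate the crux at step~(3), the vanishing of a relative/Cartier-type cohomology group controlling gauge equivalence of group-like elements, and you note that this is precisely what the paper flags as open. So there is no hidden error to point out --- the gap is the one you already name. Two small remarks on the outline itself. First, step~(1) as written is slightly optimistic: the two Borel subalgebras $A,B\subseteq R$ give rise a priori to two \emph{different} corings $V_A=\Hom_{A^{\op}}(R,A)$ and $V_B=\Hom_{B^{\op}}(R,B)$, not two group-like elements in a single common coring; part of the work (already implicit in the existing uniqueness theorem) is to identify these, and only then does the question reduce to comparing group-likes. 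Second, the analogy with Wedderburn--Malcev is apt but the cohomological input is genuinely harder here: in the classical case separability of $A/\rad A$ kills the obstruction outright, whereas directedness of $A$ and the exceptional-collection property of the $\Delta_\mathtt{i}$ are weaker hypotheses and it is not clear they force the relevant relative $H^1$ to vanish. Absent a proof of that vanishing, the proposal remains a plausible strategy for an open problem rather than a proof.
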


We suggested to call a basic regular exact Borel subalgebra $A\subseteq R$ a bound quiver for the exact category $\mathcal{F}(\Delta)$. Though a big difference to the classical theory of bound quivers is that here there are two algebras involved and we only ask $A$ to be basic. The algebra $R$ is usually quite far from being basic. In fact, Conde has provided the following interesting characterisation of the quasi-hereditary algebras such that $R$ is basic. 

\begin{theorem}[{\cite[Proposition 5.1]{Con21}}]
Let $R$ be a basic quasi-hereditary algebra. Then $R$ has a regular exact Borel subalgebra if and only if $\rad \Delta_\mathtt{i}\in \mathcal{F}(\nabla)$ for all $\mathtt{i}$.  
\end{theorem}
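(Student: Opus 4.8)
The plan is to reduce the statement to a question about an algebra that already carries a regular exact Borel subalgebra, and then settle that question by analysing how the standard modules decompose. By the existence theorem one can fix a quasi-hereditary algebra $S$, Morita equivalent to $R$ as a quasi-hereditary algebra, together with a basic normal regular exact Borel subalgebra $A\subseteq S$. Since Morita equivalence of quasi-hereditary algebras preserves $\mathcal{F}(\Delta)$ and $\mathcal{F}(\nabla)$ together with the standard and costandard modules, the condition ``$\rad \Delta_\mathtt{i}\in\mathcal{F}(\nabla)$ for all $\mathtt{i}$'' holds for $R$ if and only if it holds for $S$; and by the uniqueness theorem the basic algebra $R$ admits a regular exact Borel subalgebra if and only if $S\cong R$, i.e. if and only if $S$ is basic. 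Hence everything reduces to proving: for a quasi-hereditary algebra $S$ with a regular exact Borel subalgebra $A\subseteq S$, one has $S$ basic $\iff$ $\rad \Delta_\mathtt{i}^S\in\mathcal{F}(\nabla^S)$ for every $\mathtt{i}$.

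For this equivalence I would first rephrase the condition that $S$ is basic numerically: over the algebraically closed field $\Bbbk$, the algebra $S$ is basic exactly when $\dim_\Bbbk L_\mathtt{i}^S=1$ for all $\mathtt{i}$, and since $\Delta_\mathtt{i}^S\cong S\otimes_A L_\mathtt{i}^A$ has simple top $L_\mathtt{i}^S$, this is equivalent to $\dim_\Bbbk\rad \Delta_\mathtt{i}^S=\dim_\Bbbk\Delta_\mathtt{i}^S-1$ for all $\mathtt{i}$. Both quantities should then be computed through the exact Borel subalgebra. Using that $A$ is basic and that $S$ is projective, hence flat, as a right $A$-module, applying $S\otimes_A-$ to a composition series of the left $A$-module $\rad(P_\mathtt{i}^A)$ exhibits $\ker(P_\mathtt{i}^S\twoheadrightarrow\Delta_\mathtt{i}^S)$ as a $\Delta^S$-filtered module, with $\dim_\Bbbk\Delta_\mathtt{i}^S$ equal to the multiplicity of $e_\mathtt{i}A$ as a direct summand of the right $A$-module $S$; dually, passing to the associated coring $V=\Hom_{A^{\mathrm{op}}}(S,A)$ with its group-like element, the same flatness lets one read $\rad \Delta_\mathtt{i}^S$ off the bimodule $\overline{V}$, so that the ``extra'' composition factors of $\rad \Delta_\mathtt{i}^S$ --- those responsible for $\dim_\Bbbk L_\mathtt{i}^S>1$ --- are controlled by the way the radical homomorphisms $\rad_S(\Delta,\Delta)$ generating $\overline{V}$ compose with one another. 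This is where Conde's recursive formula for the decomposition numbers of $S$ in terms of $A$ and $\overline{V}$ comes in.

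Finally I would translate the minimality of all these decomposition numbers into the costandard-filtration condition, using the standard facts that $X\in\mathcal{F}(\nabla^S)$ if and only if $\Ext^1_S(\Delta_\mathtt{j}^S,X)=0$ for all $\mathtt{j}$, that $\Hom_S(\Delta_\mathtt{j}^S,\nabla_\mathtt{k}^S)$ equals $\Bbbk$ when $\mathtt{j}=\mathtt{k}$ and vanishes otherwise while $\Ext^1_S(\Delta_\mathtt{j}^S,\nabla_\mathtt{k}^S)=0$, and BGG reciprocity $(P_\mathtt{i}^S:\Delta_\mathtt{j}^S)=[\nabla_\mathtt{j}^S:L_\mathtt{i}^S]$. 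Feeding the multiplicity formula of the previous step into the identities $\dim_\Bbbk L_\mathtt{i}^S=1$ should turn them into exactly the vanishing $\Ext^1_S(\Delta_\mathtt{j}^S,\rad \Delta_\mathtt{i}^S)=0$ for all $\mathtt{i},\mathtt{j}$, that is, $\rad \Delta_\mathtt{i}^S\in\mathcal{F}(\nabla^S)$; conversely, assuming $\rad \Delta_\mathtt{i}^S\in\mathcal{F}(\nabla^S)$ for all $\mathtt{i}$ one runs the same computation backwards to obtain $\dim_\Bbbk L_\mathtt{i}^S=1$ and hence $S$ basic. The hard part will be the combinatorial core of the second step: making precise exactly how the extra composition factors of $\rad \Delta_\mathtt{i}^S$ are produced by the coring $\overline{V}$ and recognising their absence as the condition that every $\rad \Delta_\mathtt{i}$ is filtered by costandard modules; this is precisely what Conde's recursion for the Jordan--H\"older multiplicities of $S$ accomplishes, and for the full argument I would refer to \cite{Con21}.
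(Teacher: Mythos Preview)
The paper does not give its own proof of this theorem; it is quoted as \cite[Proposition 5.1]{Con21} and left without argument. So there is no in-paper proof to compare your proposal against.

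On its own merits: your opening reduction is clean and correct. Using the existence theorem to produce $S$ Morita equivalent to $R$ with a basic regular exact Borel subalgebra, and then invoking uniqueness to conclude that the basic algebra $R$ has one iff $S$ itself is basic, is a legitimate way to reframe the question as ``$S$ basic $\iff \rad\Delta_\mathtt{i}^S\in\mathcal{F}(\nabla^S)$''. Your observation that $\dim_\Bbbk\Delta_\mathtt{i}^S$ equals the multiplicity of $e_\mathtt{i}A$ in $S_A$ is also right.

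However, from that point on your sketch does not actually carry out the argument: you explicitly hand the ``combinatorial core'' back to \cite{Con21}. That is exactly what the survey does too, so in effect your proposal is a re-citation with added scaffolding rather than an independent proof. One imprecision worth fixing: applying $S\otimes_A-$ to $0\to\rad P_\mathtt{i}^A\to P_\mathtt{i}^A\to L_\mathtt{i}^A\to 0$ yields $\ker(Se_\mathtt{i}\twoheadrightarrow\Delta_\mathtt{i}^S)$, not $\ker(P_\mathtt{i}^S\twoheadrightarrow\Delta_\mathtt{i}^S)$, since $Se_\mathtt{i}$ is projective but need not be indecomposable. If you want a self-contained proof along your lines, the honest route is to show directly, by induction on $\mathtt{i}$, that Conde's recursion (Theorem~\ref{Conde's-formula}) gives $\ell_\mathtt{i}=1$ for all $\mathtt{i}$ precisely when $\sum_{\mathtt{j}<\mathtt{i}}\operatorname{length}(\nabla_\mathtt{j})\dim\Hom(\Delta_\mathtt{j},\Delta_\mathtt{i})=\operatorname{length}(\rad\Delta_\mathtt{i})$, and then identify that equality with $\rad\Delta_\mathtt{i}\in\mathcal{F}(\nabla)$ via $\dim\Hom(\Delta_\mathtt{j},\rad\Delta_\mathtt{i})=\dim\Hom(\Delta_\mathtt{j},\Delta_\mathtt{i})$ for $\mathtt{j}<\mathtt{i}$ together with the $\Ext^1$-vanishing criterion you already list.
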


More generally, she was even able to give a recursive formula for the decomposition multiplicies of the projectives appearing in $R$, which is quite helpful and easy to use in practice if one wants to know which Morita representative admits a regular exact Borel without computing the whole $A_\infty$-structure on the $\Ext$-algebra first. 

\begin{theorem}\label{Conde's-formula}
Let $\Lambda$ be a quasi-hereditary algebra. Define a sequence of positive integers $\ell_\mathtt{i}$ recursively, via 
\[\ell_\mathtt{i}=1+\sum_{\mathtt{k}\leq \mathtt{j}<\mathtt{i}} \ell_\mathtt{k} [\nabla_\mathtt{j}:L_\mathtt{k}] \dim \Hom_\Lambda(\Delta_\mathtt{j}, \Delta_\mathtt{i})-\sum_{\mathtt{j}<\mathtt{i}} \ell_j[\Delta_\mathtt{i}:L_\mathtt{j}]\]
Then, $R\cong \End(P_\mathtt{i}^{\ell_\mathtt{i}})^{\op}$ is the unique Morita representative of $\Lambda$ having a basic regular exact Borel subalgebra. 
\end{theorem}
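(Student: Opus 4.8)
The plan is to deduce the formula for the multiplicities $\ell_\mathtt{i}$ from the characterisation in the preceding theorem together with standard homological bookkeeping in $\mathcal{F}(\Delta)$. First I would fix the unique Morita representative $R$ of $\Lambda$ admitting a basic regular exact Borel subalgebra $A\subseteq R$; its existence is guaranteed by the existence theorem, and by Conde's criterion $R$ is characterised by the property that $\operatorname{rad}\Delta_\mathtt{i}\in\mathcal{F}(\nabla)$ for all $\mathtt{i}$ (applied to the basic representative). Writing $R\cong\End_\Lambda(\bigoplus_\mathtt{i} P_\mathtt{i}^{\ell_\mathtt{i}})^{\op}$, the task is to identify $\ell_\mathtt{i}$, which equals the multiplicity of the indecomposable projective $R$-module $P_\mathtt{i}^R$ as a summand of the regular representation, equivalently $\dim L_\mathtt{i}^R$ — but more usefully $\ell_\mathtt{i}=(R_R:\Delta_\mathtt{i}^R)$-type data controlled by the Borel. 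The key identity to exploit is that, since $A\subseteq R$ is an exact Borel subalgebra, $R$ is projective as a right $A$-module and $R\otimes_A L_\mathtt{i}^A\cong\Delta_\mathtt{i}^R$; hence as a right $A$-module $R\cong\bigoplus_\mathtt{i}(P_\mathtt{i}^A)^{\oplus\ell_\mathtt{i}}$ where $\ell_\mathtt{i}=[\Delta_\mathtt{i}^R$ appearing appropriately$]$, and one can count dimensions on both sides of $R$.

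Next I would set up the recursion by induction on the poset, using a total refinement of the essential order. The base case $\ell_{\min}=1$ should be immediate: the minimal standard module is simple and projective, and the corresponding idempotent cannot be split further. For the inductive step, I would compare two expressions for $\dim R$ (or for $\dim e_\mathtt{i}R$, the dimension of an indecomposable projective $R$-module restricted appropriately). On one hand, $P_\mathtt{i}^R$ has a $\Delta^R$-filtration with $(P_\mathtt{i}^R:\Delta_\mathtt{j}^R)=[\nabla_\mathtt{j}:L_\mathtt{i}]$ by BGG reciprocity, and $\dim\Delta_\mathtt{j}^R=\dim\Hom_A(P_\mathtt{j}^A, R\text{-stuff})$; since $\Delta_\mathtt{j}^R\cong R\otimes_A L_\mathtt{j}^A$ and $R$ is right $A$-projective with $R\cong\bigoplus_\mathtt{k}(P_\mathtt{k}^A)^{\ell_\mathtt{k}}$, one gets $\dim\Delta_\mathtt{j}^R=\sum_\mathtt{k}\ell_\mathtt{k}[P_\mathtt{k}^A:L_\mathtt{j}^A]$. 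On the other hand, $\dim$ of the same module can be read off from the $\Lambda$-side via $P_\mathtt{i}^R\leftrightarrow P_\mathtt{i}^\Lambda$ (Morita), giving a different sum over composition factors $[\Delta_\mathtt{i}^\Lambda:L_\mathtt{j}]$. Equating the two, isolating the $\mathtt{i}=\mathtt{j}$ term (which carries $\ell_\mathtt{i}$ with coefficient $1$ because $A$ is directed so $[P_\mathtt{i}^A:L_\mathtt{i}^A]=1$ in top degree, while all other contributions involve $\ell_\mathtt{k}$ with $\mathtt{k}<\mathtt{i}$ already known), and rearranging should produce exactly the stated formula, after translating $[P_\mathtt{k}^A:L_\mathtt{j}^A]$ into the $\Lambda$-theoretic quantities $[\nabla_\mathtt{j}:L_\mathtt{k}]\dim\Hom_\Lambda(\Delta_\mathtt{j},\Delta_\mathtt{i})$ using that the $A$-modules $L^A$ correspond to $\Delta^\Lambda$ and that composition multiplicities of the Borel's projectives are governed by $\Hom$-spaces between standards (this is where $\dim\Hom_\Lambda(\Delta_\mathtt{j},\Delta_\mathtt{i})$ enters, as $\Ext^{\geq 1}$-information is carried by arrows/relations and does not affect dimension counts once one uses Euler-characteristic-style cancellation).

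The uniqueness assertion — that $R$ is \emph{the} Morita representative with a basic regular exact Borel subalgebra — I would simply cite from the existence/uniqueness theorems already proved with Koenig--Ovsienko and Miemietz, combined with Conde's characterisation: among all Morita representatives, the multiplicities $\ell_\mathtt{i}$ are forced, so $R$ is determined up to isomorphism. Alternatively, positivity of all $\ell_\mathtt{i}$ (which must be checked to make the statement meaningful) follows because $\ell_\mathtt{i}$ is a genuine summand multiplicity of an algebra known to exist; I would remark on this rather than prove positivity from the recursion directly, since the combinatorial positivity of the right-hand side is not obvious a priori.

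The main obstacle I anticipate is the careful dictionary between the module theory of the Borel subalgebra $A$ and that of $\Lambda$: specifically, showing that $\dim\Delta_\mathtt{i}^R=\sum_{\mathtt{k}\leq\mathtt{j}\leq\mathtt{i}}\ell_\mathtt{k}[\nabla_\mathtt{j}:L_\mathtt{k}]\dim\Hom_\Lambda(\Delta_\mathtt{j},\Delta_\mathtt{i})$, i.e. that the dimension of a standard $R$-module, expressed through the right-$A$-projective structure of $R$, matches the claimed combinatorial sum. This requires knowing the composition factors of the indecomposable projective $A$-modules in terms of $\Lambda$-data, which in turn uses that $\overline{V}$ is the projective $A$-bimodule on $\operatorname{rad}_\Lambda(\Delta,\Delta)$ together with the regular-exactness isomorphisms on $\Ext^1$ and $\Hom$ from the lemma computing $\mathbb{D}\Hom_R$ and $\mathbb{D}\Ext^i_R$. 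Getting the bounds on the summation indices right (why $\mathtt{k}\leq\mathtt{j}<\mathtt{i}$ in the positive term but $\mathtt{j}<\mathtt{i}$ in the subtracted term) is the delicate combinatorial point, and I would spend most of the effort there; everything else is dimension-counting and invoking previously established structure theorems.
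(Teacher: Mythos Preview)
The paper does not contain a proof of this theorem at all: it is stated as a result of Conde, with reference to \cite{Con21}, in a survey context. There is therefore nothing in the paper to compare your proposal against. If you want to check your strategy, you would need to consult Conde's original article.

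That said, a few remarks on your outline. The broad idea of dimension-counting via the right-$A$-projective decomposition of $R$ is reasonable, and the identity $\dim\Delta_\mathtt{i}^R=\sum_{\mathtt{j}\leq\mathtt{i}}\ell_\mathtt{j}[\Delta_\mathtt{i}:L_\mathtt{j}]$ (obtained from $R\cong\bigoplus_\mathtt{j}(P_\mathtt{j}^R)^{\ell_\mathtt{j}}$ as a left $R$-module) does rearrange the stated recursion into the equivalent form
\[
\dim\Delta_\mathtt{i}^R \;=\; 1+\sum_{\mathtt{k}\leq\mathtt{j}<\mathtt{i}}\ell_\mathtt{k}\,[\nabla_\mathtt{j}:L_\mathtt{k}]\,\dim\Hom_\Lambda(\Delta_\mathtt{j},\Delta_\mathtt{i}).
\]
However, your proposal is vague precisely at the step you yourself flag as the obstacle: translating composition multiplicities of indecomposable projective $A$-modules into the $\Lambda$-side quantities $[\nabla_\mathtt{j}:L_\mathtt{k}]\dim\Hom_\Lambda(\Delta_\mathtt{j},\Delta_\mathtt{i})$. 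Your suggestion to read this off from $\overline{V}$ being the projective $A$-bimodule on $\rad_\Lambda(\Delta,\Delta)$ is the right ingredient, but as written it does not explain where the costandard multiplicities $[\nabla_\mathtt{j}:L_\mathtt{k}]$ enter; these typically arise through BGG reciprocity $(P_\mathtt{k}:\Delta_\mathtt{j})=[\nabla_\mathtt{j}:L_\mathtt{k}]$ on the $\Lambda$-side, and you would need to make explicit how the $A$-projective dimensions see those filtration multiplicities. Until that dictionary is made precise, the argument has a genuine gap.
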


Note that always $\ell_\mathtt{i}=1$ if $\mathtt{i}$ is minimal with respect to the partial order.  

\section{Examples}\label{examples}

When existence of regular exact Borel subalgebras was proven, there weren't too many examples of quasi-hereditary algebras were the in principle constructive approach was done. In fact, the appendix of \cite{KKO14} contains just $4$ examples. A few more examples, including the representation-finite and tame blocks of Schur algebras are contained in the survey \cite{Kul17}. In the past years, the situation has changed a bit and theoretical results as well as computations for particular classes of quasi-hereditary algebras have been put forward. 

We start this section with the class of path algebras of finite quivers. As hereditary algebras they are quasi-hereditary with respect to every adapted order, see \cite{DR92} (for a discussion of different ways to define the concept of an adapted order see \cite{Rod23}). However, many adapted orders will yield the same standard modules. In \cite{FKR22}, Flores, Kimura, and Rognerud classified quasi-hereditary structures on several classes of such algebras. In particular, they proved that the number of quasi-hereditary structures on the path algebra of a linearly oriented $\mathbb{A}_n$ quiver is given by the $n$-th Catalan numbers and that in general quasi-hereditary structures can be glued at sinks or sources. As a consequence, Thuresson obtain a general criterion for the path algebra of a quiver of Dynkin type $\mathbb{A}$ to admit a regular exact Borel subalgebra. 

\begin{theorem}[{\cite[Theorem 59]{Thu24}}]
Let $A$ be the path algebra of an $\mathbb{A}_n$-quiver $Q$. Then, $A$ has a regular exact Borel subalgebra if and only if every sink with two neighbours is extremal (i.e. minimal or maximal) with respect to the essential order. In particular, every linearly oriented $\mathbb{A}_n$-quiver has an exact Borel subalgebra, independently of the chosen partial order. 
\end{theorem}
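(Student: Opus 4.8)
The plan is to deduce the statement entirely from the criterion of Conde recalled above (\cite[Proposition 5.1]{Con21}). Since $A=\Bbbk Q$ is hereditary it is in particular basic, and by \cite{DR92} it is quasi-hereditary with respect to the given adapted order; so that criterion applies verbatim: $A$ has a regular exact Borel subalgebra if and only if $\rad\Delta_\mathtt{i}\in\mathcal{F}(\nabla)$ for every vertex $\mathtt{i}$. The task is therefore to translate this filtration condition into combinatorics of $Q$ and the order.

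For this I would first unwind the relevant module theory of type $\mathbb{A}$. Every indecomposable $\Bbbk Q$-module is an interval module $M[a,b]$ supported on a connected subquiver; for an adapted order $\leq$, the standard module $\Delta_\mathtt{i}$ is the largest interval quotient of $P_\mathtt{i}$ all of whose composition factors are indexed by vertices $\leq\mathtt{i}$, and dually $\nabla_\mathtt{i}$ is the largest such interval submodule of $I_\mathtt{i}$. Using the classification of quasi-hereditary structures on $\mathbb{A}_n$-quivers from \cite{FKR22} (in the form that adapted orders are obtained by gluing at sinks and sources), together with the reformulation of the essential order via $\Hom$ and $\Ext^1$ between standard modules (Lemma \ref{walter's-lemma}), one obtains a combinatorial description of membership in $\mathcal{F}(\nabla)$ in terms of how the interval summands of a module sit relative to the poset. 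In particular $\rad\Delta_\mathtt{i}$ is nonzero only when, along some arrow at $\mathtt{i}$, the neighbour of $\mathtt{i}$ is smaller than $\mathtt{i}$, in which case $\rad\Delta_\mathtt{i}$ is the sub-interval of $\Delta_\mathtt{i}$ ending at that neighbour.

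The heart of the argument is then to locate the obstruction to $\rad\Delta_\mathtt{i}\in\mathcal{F}(\nabla)$. Running through the possible local shapes of $Q$ at a vertex, the only configuration that can produce a non-$\nabla$-filtered radical is a sink $\mathtt{c}$ with two neighbours $\mathtt{a},\mathtt{b}$: when $\mathtt{c}$ is minimal the standard modules at $\mathtt{a}$ and $\mathtt{b}$ are already cut off at $\mathtt{c}$ so no obstruction arises, and when $\mathtt{c}$ is maximal one checks directly that the radicals of the standard modules meeting $\mathtt{c}$ are still $\nabla$-filtered; but when $\mathtt{c}$ is neither, relabelling so that $\mathtt{a}<_{\ess}\mathtt{c}<_{\ess}\mathtt{b}$, the costandard $\nabla_\mathtt{c}$ properly contains $L_\mathtt{c}$ whereas $\rad\Delta_\mathtt{b}$ has $L_\mathtt{c}$ as an interval summand, which therefore cannot be costandardly filtered --- the smallest instance being $\mathtt{1}\to\mathtt{2}\leftarrow\mathtt{3}$ with $\mathtt{1}<\mathtt{2}<\mathtt{3}$, where $\rad\Delta_\mathtt{3}=L_\mathtt{2}\notin\mathcal{F}(\nabla)$ since $\nabla_\mathtt{2}=M[\mathtt{1},\mathtt{2}]$. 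Conversely, if every sink with two neighbours is extremal for $<_{\ess}$, a finite case-check confirms that each $\rad\Delta_\mathtt{i}$ is $\nabla$-filtered. I expect this two-directional analysis --- in particular disentangling the interplay of the orientation of $Q$ with the adapted order, and ruling out obstructions at sources and at interior vertices --- to be the main technical point.

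Finally, the special case is immediate: a linearly oriented $\mathbb{A}_n$-quiver has a single sink, which is an endpoint and hence has exactly one neighbour, so the condition ``every sink with two neighbours is extremal'' is vacuously satisfied for every adapted order. Thus $A$ has a regular exact Borel subalgebra, and a fortiori an exact Borel subalgebra in the sense of \cite{Koe95}, independently of the chosen order.
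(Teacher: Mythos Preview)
The paper does not contain a proof of this theorem; it is quoted without proof from \cite{Thu24}, so there is no argument here to compare your proposal against. Your strategy via Conde's criterion \cite[Proposition 5.1]{Con21} is a natural one and your small example $\mathtt{1}\to\mathtt{2}\leftarrow\mathtt{3}$ with $\mathtt{1}<\mathtt{2}<\mathtt{3}$ correctly exhibits the obstruction at a non-extremal interior sink.

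Two remarks on the proposal itself. First, what you call a ``finite case-check'' for the converse direction is not a priori finite in $n$; you would need either an induction along the quiver or, more cleanly, an explicit description of $\rad\Delta_\mathtt{i}$ as a direct sum of interval modules together with a criterion for when an interval module lies in $\mathcal{F}(\nabla)$ in terms of the order. This is entirely feasible in type $\mathbb{A}$ but deserves to be written out rather than asserted. Second, in locating the obstruction you should also explain why no problem can arise at \emph{sources} with two neighbours or at interior vertices that are neither sinks nor sources; you acknowledge this as ``the main technical point'' but do not indicate how it is handled. Without that, the ``only if'' direction is complete but the ``if'' direction remains a sketch.
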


Thuresson's paper also gives a combinatorial description of the quiver of the exact Borel subalgebras. However, he doesn't compute the decomposition multiplicities of $R$ in general (i.e. in the case when there is a non-extremal sink). As an illustration of Conde's formula, we compute multiplicities in one particular example. 

\begin{example}
Let $A$ be the path algebra of the $\mathbb{A}_{2n+1}$-quiver with zig-zag orientation 
\[\begin{tikzcd}\mathtt{1}\arrow{r} &\mathtt{2}&\mathtt{3}\arrow{l}\arrow{r}&\dots&\mathtt{2n+1}\arrow{l}\end{tikzcd}\]
with the natural partial order on $\{\mathtt{1},\mathtt{2}, \dots, \mathtt{n}\}$, which is also the essential order for this algebra. Then, the standard modules are given by $\Delta_\mathtt{1}=L_\mathtt{1}$ and $\Delta_\mathtt{2i}=L_\mathtt{2i}$ and $\Delta_\mathtt{2i+1}=M(\mathtt{2i}, \mathtt{2i+1})$, the length two interval module with top $L_\mathtt{2i+1}$ and socle $L_\mathtt{2i}$ for all $\mathtt{i}\geq \mathtt{1}$. It is easily seen that 
\[\dim \Ext^1_A(\Delta_\mathtt{i}, \Delta_\mathtt{j})=\begin{cases}1&\text{if $\mathtt{i}$ is odd and $\mathtt{j}=\mathtt{i+1}$ or $\mathtt{j}=\mathtt{i+2}$},\\0&\text{else.}\end{cases}\]
Thus, the regular exact Borel subalgebra of an algebra $R$ Morita equivalent to $A$ is isomorphic to the path algebra of a quiver with the following shape:
\[
\begin{tikzcd}
\mathtt{1}\arrow{d}\arrow{r}&\mathtt{3}\arrow{r}\arrow{d}&\dots\arrow{r}&\mathtt{2n-1}\arrow{r}\arrow{d}&\mathtt{2n+1}\\
\mathtt{2}&\mathtt{4}&\dots&\mathtt{2n}
\end{tikzcd}\]
By Thuresson's theorem above, $R$ is not isomorphic to $A$ as $\mathtt{2}, \mathtt{4},\dots,\mathtt{2n}$ are not extremal. To use  Conde's formula \ref{Conde's-formula}, note that for $\mathtt{i}$ even, $\Hom(\Delta_\mathtt{j},\Delta_\mathtt{i})=0$ and $[\Delta_\mathtt{i}:L_\mathtt{j}]=0$ for $\mathtt{j}\neq \mathtt{i}$. It follows that $\ell_\mathtt{i}=1$ for even $\mathtt{i}$. In the case that $\mathtt{i}$ is odd, note that $\Hom(\Delta_\mathtt{j},\Delta_\mathtt{i})$ vanishes except for $\mathtt{j}=\mathtt{i-1}$, in which case it is one-dimensional. Similarly, $[\nabla_\mathtt{i-1}:L_\mathtt{k}]$ vanishes except for $\mathtt{k}=\mathtt{i-1}$ and $\mathtt{k}=\mathtt{i-2}$, in which case the multiplicities are one. Finally, $[\Delta_\mathtt{i}:L_\mathtt{j}]=0$ except for $j=\mathtt{i-1}$. Therefore, 
\[\ell_i=1+\sum_{\mathtt{k}=\mathtt{i-1} \text{ or }\mathtt{i-2}} \ell_\mathtt{k}-\sum_{\mathtt{j}=\mathtt{i-1}} \ell_\mathtt{j}=1+\ell_\mathtt{i-2}.\] 
It follows that \[R\cong \End_A(A\oplus 2P_\mathtt{3}\oplus 3P_\mathtt{5} \dots\oplus (n+1)P_\mathtt{2n+1})^{\op},\] so even for type $\mathbb{A}$-quivers, multiplicities of projectives in the algebra can get quite big.   
\end{example}

There is no general description for non-linearly oriented type $\mathbb{A}$ quivers for the decomposition multiplicities of $R$. For the other Dynkin types, and more general hereditary algebras, it would be interesting to obtain a criterion for when the quasi-hereditary algebra has a regular exact Borel subalgebra. We continue by giving the example of a path algebra of a finite quiver which is not of type $\mathbb{A}$.

\begin{example}
Another example for how high-dimensional $R$ can get even for fairly innocent-looking algebra is given by the following version of the linearly ordered $\mathbb{A}_3$-quiver with multiple arrows:
\[
\begin{tikzcd}
\mathtt{1}\arrow[phantom]{r}[description]{\vdots}\arrow[yshift=1ex]{r}{a_1}\arrow[yshift=-1.5ex]{r}[swap]{a_m}&\mathtt{2}\arrow[phantom]{r}[description]{\vdots}\arrow[yshift=1ex]{r}{b_1}\arrow[yshift=-1.5ex]{r}[swap]{b_n}&\mathtt{3}
\end{tikzcd}
\]
According to \cite[Proposition 2.23]{FKR22}, for a hereditary algebra, the number of quasi-hereditary structures does not depend on the multiplicity of the arrows. Therefore, just like for linearly oriented $\mathbb{A}_3$, there are $5$ quasi-hereditary structures on this algebra. All but one of them satisfy the criterion $\rad \Delta_\mathtt{i}\in \mathcal{F}(\nabla)$ since the radical of $\Delta_\mathtt{i}$ either vanishes or is a simple costandard module. The only remaining partial order is $\mathtt{1}>\mathtt{3}>\mathtt{2}$. With respect to this partial order, the standard modules are $\Delta_\mathtt{1}=P_\mathtt{1}$, $\Delta_\mathtt{2}=L_\mathtt{2}$, and $\Delta_\mathtt{3}=L_\mathtt{3}$ while the costandard modules are $\nabla_\mathtt{1}=L_\mathtt{1}$, $\nabla_\mathtt{2}=L_\mathtt{2}$, and $\nabla_\mathtt{3}$ is a module with top $L_\mathtt{2}^m$ and socle $L_\mathtt{3}$. Computing extensions, one can check that the basic regular exact Borel subalgebra is given by
\[
\begin{tikzcd}
\mathtt{3}&\mathtt{2}\arrow[phantom]{l}[description]{\vdots}\arrow[yshift=1ex]{l}[swap]{b_1}\arrow[yshift=-1.5ex]{l}{b_n}\arrow[phantom]{r}[description]{\vdots}\arrow[yshift=1ex]{r}{c_1}\arrow[yshift=-1.5ex]{r}[swap]{c_{(n^2-1)m}}&\mathtt{1}.
\end{tikzcd}
\]
As is noted in \cite{Con21}, the minimal elements and the elements one layer up in the partial order always have multiplicity $1$, thus the only remaining multiplicity is
\begin{align*}
\begin{split}
\ell_\mathtt{1}&=1+[\nabla_\mathtt{3}:L_\mathtt{2}]\Hom(\Delta_\mathtt{3}:\Delta_\mathtt{1})+[\nabla_\mathtt{3}:L_\mathtt{3}]\Hom(\Delta_\mathtt{3},\Delta_\mathtt{1})\\
&\phantom{1}-[\Delta_\mathtt{1}:L_\mathtt{3}]-[\Delta_\mathtt{1}:L_\mathtt{2}]
\end{split}\\
&=1+n\cdot (nm)+1\cdot (nm)-nm-m=1+(n^2-1)m.
\end{align*}
It follows that the basic representative of this quasi-hereditary algebra has a regular exact Borel subalgebra if and only if $n=1$, in general, $R\cong \End_A(A\oplus P_\mathtt{1}^{(n^2-1)m})$ has a basic regular exact Borel subalgebra. 
\end{example}

Another method to construct quasi-hereditary algebras are Xi's dual extension algebras. To construct them one starts with two algebras $B=\Bbbk Q/I$ and $A=\Bbbk Q'/I'$ where the vertex sets of $Q$ and $Q'$ coincide. One then glues $A$ and $B$ in the following way: The \textbf{dual extension algebra} of $A$ and $B$ is the quotient of the path algebra of the joint quiver $\overline{Q}$ with arrows in $Q_1\cup Q_1'$ by the relations generated by $I$, $I'$, and the zero relations $\alpha\beta'$ where $\alpha\in Q$ and $\beta'\in Q'$. If one assumes that $B$ and $A^{\op}$ are directed, then $\Lambda$ is quasi-hereditary with respect to the same partial order. Its standard modules are the projective $A$-modules. Furthermore, the algebra $B$ is an exact Borel subalgebra of $\Lambda$. However, it will usually not be regular. As outlined, an essential step in the construction of a regular exact Borel subalgebra is the computation of the $\Ext$-algebra of the standard modules. In this regard, Thuresson obtained the following result:

\begin{theorem}[{\cite[Theorem 10]{Thu22}}]
Let $\Lambda$ be the dual extension algebra of $A$ and $B$. Then the $\Ext$-algebra of its standard modules is the dual extension algebra of the $\Ext$-algebra of the simple $B$-modules and $A$. 
\end{theorem}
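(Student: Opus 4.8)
The plan is to reduce everything to a computation over $B$ using that, as recalled above, $B$ is an exact Borel subalgebra of $\Lambda$: in particular $\Lambda$ is projective (in fact free) as a right $B$-module and $\Delta_\mathtt{i}\cong\Lambda\otimes_B L_\mathtt{i}^B$, so that $\Delta:=\bigoplus_\mathtt{i}\Delta_\mathtt{i}\cong\Lambda\otimes_B\overline{B}$ with $\overline{B}:=B/\rad B$. Thus the standard modules are \emph{induced} from the simple $B$-modules, and I would run an induction--restriction argument (a miniature relative Koszul duality) while carefully tracking the multiplicative structure.

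First I would use that, $\Lambda$ being projective over $B$ on the right, the functor $\Lambda\otimes_B-$ is exact and sends projectives to projectives; applied to a projective $B$-resolution $P^\bullet\to\overline{B}$ it produces a projective $\Lambda$-resolution $\Lambda\otimes_B P^\bullet\to\Delta$. A chain-level version of the $(\Lambda\otimes_B-,\,\mathrm{res})$-adjunction, together with the quasi-isomorphism $(\Lambda\otimes_B P^\bullet)|_B\xrightarrow{\ \sim\ }\Delta|_B$ (and the fact that $\Hom_B(P^\bullet,-)$ preserves quasi-isomorphisms), then gives an isomorphism of graded algebras $\Ext^*_\Lambda(\Delta,\Delta)\cong\Ext^*_B(\overline{B},\Delta|_B)$, where the product on the right is Yoneda composition in the first variable twisted by the $B$-linear counit $\Lambda\otimes_B(\Delta|_B)\to\Delta$ --- which, under the identifications below, is just the right $A$-action on $\Delta$ (recall $\Delta$ is a $(\Lambda,A)$-bimodule via $A\cong\End_\Lambda(\Delta)^{\op}$).

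Next I would compute $\Delta|_B$. Since the arrows of $Q$ lie in the kernel of the surjection $\Lambda\twoheadrightarrow A$, they act as zero on each $\Delta_\mathtt{i}=P_\mathtt{i}^A$, so $\Delta_\mathtt{i}|_B$ is a \emph{semisimple} $B$-module; writing $E:=\Bbbk Q_0$ one gets $\Delta|_B\cong\overline{B}\otimes_E A$ as a $(B,A)$-bimodule, with the left $B$-action factoring through $\overline{B}$ and the right $A$-action on the second factor. Using finite-dimensionality of $A$, this yields $\Ext^*_B(\overline{B},\Delta|_B)\cong\mathcal{E}\otimes_E A$ as graded vector spaces, compatibly with the right $A$-action and the left Yoneda action of $\mathcal{E}:=\Ext^*_B(\overline{B},\overline{B})$. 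As the simple $B$-modules are indexed by $Q_0=Q_0'$, the space $\mathcal{E}\otimes_E A$ is precisely the underlying space of the dual extension algebra $\mathcal{D}(\mathcal{E},A)$ of the $\Ext$-algebra of the simple $B$-modules and $A$.

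It then remains to match the transported product on $\mathcal{E}\otimes_E A$ with the multiplication of $\mathcal{D}(\mathcal{E},A)$: the subalgebra $\mathcal{E}\otimes_E E$ recovers the Yoneda algebra, the subalgebra $E\otimes_E A$ recovers $A$, and the real content is the \textbf{one-sided vanishing of the mixed products} --- in one order a product of an $\mathcal{E}$-class with an $A$-arrow reproduces the straightened tensor, in the other it vanishes, which is exactly the defining relation $\alpha\beta'=0$ of the dual extension. The hard part will be this vanishing: I would prove it with explicit chain-level representatives on $\Lambda\otimes_B P^\bullet$, using that a positive-degree class pulled up from $\Ext^{\geq1}_B(\overline{B},\overline{B})$ is built from the differential of $P^\bullet$, hence from the ``$B$-part'' of the triangular decomposition of $\Lambda=\mathcal{D}(B,A)$ coming from the relations $\alpha\beta'=0$, and this annihilates the $A$-direction of $\Delta|_B$; composing such a class after (the lift of) a radical map in $\Hom_\Lambda(\Delta,\Delta)$ coming from an arrow of $A$ therefore lands in the image of the differential and dies on cohomology. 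The asymmetry here mirrors precisely the asymmetry of the relations defining $\Lambda$ itself; everything preceding this last verification is formal once one knows $B$ is an exact Borel subalgebra with $\Delta$ induced from $\overline{B}$, and the remaining bookkeeping (choice of $A$ versus $A^{\op}$, order of tensor factors) is routine.
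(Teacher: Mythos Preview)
The paper does not prove this statement; it is quoted from Thuresson's article \cite{Thu22} and stated without argument, as is typical for a survey. There is therefore no proof in the paper to compare your proposal against.

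For what it is worth, your outline is a sound strategy and would lead to a correct proof. The adjunction step and the identification $\Delta|_B\cong\overline{B}\otimes_E A$ are correct and give the underlying graded vector space $\mathcal{E}\otimes_E A$. For the multiplicative structure, your intuition about the one-sided vanishing is right, but the mechanism is slightly cleaner than you describe: if $a\in\rad A$, then for every $B$-arrow $\gamma$ one has $\gamma a=0$ in $\Lambda$ by the defining relations $\alpha\beta'=0$, so the chain map on $\Lambda\otimes_B P^\bullet$ lifting a radical map $f_a\colon\Delta_h\to\Delta_i$ can be taken to be right multiplication by $a$ in degree~$0$ and \emph{identically zero in all positive degrees}. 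Hence composing any $\Ext^{\geq 1}$-class after such an $f_a$ vanishes on the nose at the chain level, not merely up to boundaries. The remaining products ($\mathcal{E}$ with $\mathcal{E}$ via induced lifts, $A$ with $A$ in degree~$0$, and $A$ composed after $\mathcal{E}$ giving the straightened tensor) are routine from the same description, and together they reproduce the dual extension relations. Your bookkeeping caveat about $A$ versus $A^{\op}$ is well placed; this is where care is needed to match conventions with the original source.
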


His paper also contains methods to obtain the $A_\infty$-structure on the $\Ext$-algebra of standard modules from the $A_\infty$-structure on the $\Ext$-algebra of simple $B$-modules. The above result has been generalised to Deng and Xi's double twisted incidence algebras by Norlén J\"aderberg, see \cite[Theorem 2.48]{Nor23}. Interestingly, the Ringel duals of the dual extension algebras of path algebras of trees satisfy Conde's criterion and thereby have regular exact Borel subalgebras, see \cite[Proposition 5.5]{Con21}.  

Another common way to construct new classes of finite-dimensional algebras with similar homological properties from a given one is given by skew group algebras: Let $\Lambda$ be an algebra and $G$ be a finite group acting on $\Lambda$ via algebra automorphisms. Then, the skew group algebra $\Lambda*G$ is defined to be the algebra with underlying vector space $\Lambda\otimes_\Bbbk \Bbbk G$ and multiplication defined by 
\[(a\otimes g)(b\otimes h)=ag(b)\otimes gh\]
for all $a,b\in \Lambda$ and all $g,h\in G$ where $g(b)$ denotes the action of $g$ on $b$. In general, there is no reason why the skew-group algebra of a quasi-hereditary algebra should be quasi-hereditary. One has to ask for the following compatibility: A strict partial order $<$ on the isomorphism classes of simple $\Lambda$-modules is called \textbf{$G$-equivariant} if $L_\mathtt{i}<L_\mathtt{j}$ if and only if $L_\mathtt{i}^{(g)}<L_\mathtt{j}^{(h)}$ for all $g,h\in G$. 

\begin{theorem}[{\cite[Theorem 3.13, Proposition 3.16]{Rod23}}]
Let $\Lambda$ be a quasi-hereditary algebra. Let $G$ be a finite group such that the characteristic of the ground field does not divide the order of $G$. Suppose that $G$ acts on $\Lambda$ via algebra automorphisms. Assume that the induced action on the isomorphism classes of simple $\Lambda$-modules is $G$-equivariant. Then the skew group algebra $\Lambda * G$ is quasi-hereditary. If $\Lambda$ has a normal regular exact Borel subalgebra $B$, which is invariant under the action of $G$, then $B*G$ is a normal regular exact Borel subalgebra of $\Lambda * G$. 
\end{theorem}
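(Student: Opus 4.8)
The plan is to deduce everything from the separability of the ring extensions $\Lambda\subseteq\Lambda*G$ and $B\subseteq B*G$ afforded by the hypothesis $\mathrm{char}\,\Bbbk\nmid|G|$, and then to transport the defining properties of the bound quiver $B\subseteq\Lambda$ through the various induction functors.

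First I would assemble the standard toolkit. Since $\mathrm{char}\,\Bbbk\nmid|G|$, the element $e=\frac{1}{|G|}\sum_{g\in G}(1\otimes g^{-1})\otimes(1\otimes g)\in(\Lambda*G)\otimes_\Lambda(\Lambda*G)$ is a separability idempotent, so induction $\mathrm{Ind}^{\Lambda*G}_\Lambda=(\Lambda*G)\otimes_\Lambda-$ is exact, biadjoint to restriction, satisfies $\mathrm{Res}\,\mathrm{Ind}\,M\cong\bigoplus_{g}{}^{g}M$, and every $\Lambda*G$-module is a direct summand of an induced one; moreover $\Lambda*G$ is free of rank $|G|$ as a left and as a right $\Lambda$-module, and the same holds for $B\subseteq B*G$. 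The bridge between the two extensions is the observation that, because $B$ is $G$-stable, multiplication induces an isomorphism $\Lambda\otimes_B(B*G)\xrightarrow{\ \sim\ }\Lambda*G$ of $(\Lambda,B*G)$-bimodules; hence $\Lambda*G$ is projective as a right $B*G$-module (because $\Lambda$ is projective as a right $B$-module), and the four induction functors relating $B$, $B*G$, $\Lambda$, $\Lambda*G$ fit into a commuting square, so that a simple $B$-module may be induced up to $\Lambda*G$ either through $\Lambda$ or through $B*G$. Finally, Clifford theory over these separable extensions identifies the simple $B*G$-modules (and, once quasi-hereditarity is in hand, the standard $\Lambda*G$-modules) with the indecomposable summands of $\mathrm{Ind}\,L^B_i$ (resp.\ $\mathrm{Ind}\,\Delta^\Lambda_i$), with $G$ permuting them exactly as it permutes the simple $\Lambda$-modules; the assumed $G$-equivariance of the order is precisely what lets the partial order descend to an order on these summands.

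With this in place I would first prove that $\Lambda*G$ is quasi-hereditary for the induced order: the standard modules are the summands of $\mathrm{Ind}\,\Delta^\Lambda_i$, which are Schurian because $\End_{\Lambda*G}(\mathrm{Ind}\,\Delta^\Lambda_i)\cong\bigoplus_g\Hom_\Lambda(\Delta^\Lambda_i,{}^{g}\Delta^\Lambda_i)$ and $G$-equivariance forces the contributions from non-isomorphic constituents to vanish; and $\Lambda*G$ lies in $\mathcal{F}(\Delta^{\Lambda*G})$ by applying the exact functor $\mathrm{Ind}$ to a standard filtration of the left regular $\Lambda$-module. The exact Borel axioms for $B*G\subseteq\Lambda*G$ then follow: projectivity of $\Lambda*G$ over $B*G$ was noted above; applying the commuting square of inductions to $L^B_j$ turns the summands of $\mathrm{Ind}^{B*G}_B L^B_j$ into summands of $\mathrm{Ind}^{\Lambda*G}_\Lambda\Delta^\Lambda_j$, i.e.\ into standard $\Lambda*G$-modules, and the identification of individual summands $(\Lambda*G)\otimes_{B*G}L^{B*G}_k\cong\Delta^{\Lambda*G}_k$ is forced by running Clifford theory on the $B$-side and the $\Lambda$-side in parallel, using that $\Delta^\Lambda_j$ and $L^B_j$ carry the same endomorphism ring and the same $G$-stabiliser. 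Directedness of $B*G$ follows from $\Ext^1_{B*G}(\mathrm{Ind}\,M,\mathrm{Ind}\,N)\cong\bigoplus_g\Ext^1_B(M,{}^{g}N)$ together with directedness of $B$ and $G$-equivariance of the order --- or, alternatively, it is automatic once regularity is established.

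Regularity is handled by the same mechanism: for each $m\geq1$ the biadjunction turns $\Ext^m_{B*G}(\mathrm{Ind}\,L^B_i,\mathrm{Ind}\,L^B_j)$ and $\Ext^m_{\Lambda*G}(\mathrm{Ind}\,\Delta^\Lambda_i,\mathrm{Ind}\,\Delta^\Lambda_j)$ into $\bigoplus_g\Ext^m_B(L^B_i,{}^{g}L^B_j)$ and $\bigoplus_g\Ext^m_\Lambda(\Delta^\Lambda_i,{}^{g}\Delta^\Lambda_j)$, the comparison map induced by $B*G\subseteq\Lambda*G$ becomes the direct sum over $g$ of the comparison maps induced by $B\subseteq\Lambda$, and these are isomorphisms by regularity of $B\subseteq\Lambda$; splitting off the appropriate idempotent summands yields the isomorphisms for the simple $B*G$- and standard $\Lambda*G$-modules. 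The one genuinely delicate point is normality. Normality of $B\subseteq\Lambda$ provides an algebra retraction $\pi\colon\Lambda\to B$, equivalently a two-sided ideal complement $C=\ker\pi$ with $\Lambda=B\oplus C$; if such a $\pi$ can be chosen $G$-equivariantly (equivalently, if the complement $C$ can be chosen $G$-stable), then $a\otimes g\mapsto\pi(a)\otimes g$ is an algebra retraction $\Lambda*G\to B*G$ whose kernel $C\otimes\Bbbk G$ is a right ideal, so $B*G\subseteq\Lambda*G$ is normal. The hypothesis $\mathrm{char}\,\Bbbk\nmid|G|$ is exactly what is needed to run an averaging or fixed-point argument for the $G$-action on the relevant normalising datum, and $G$-invariance of $B$ is what keeps the averaged datum inside $B$; but because the defining condition (a group-like element of the dual coring, equivalently an ideal complement) is quadratic rather than linear, naive averaging does not obviously preserve it, and I expect the main obstacle to be showing that one can nevertheless choose this datum $G$-equivariantly --- presumably by exploiting the extra rigidity that regularity imposes on the complement $\overline{R}$ and on the group-like element. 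One could also try to bypass this by observing that the $G$-action induces a $G$-action on the $A_\infty$-algebra $\Ext^*_\Lambda(\Delta,\Delta)$ and hence on its truncated $A_\infty$-Koszul dual coring, and that forming the corresponding skew group coring should reproduce the bound quiver of $\mathcal{F}(\Delta^{\Lambda*G})$; but the same normalisation issue reappears there as the choice of a $G$-invariant group-like element.
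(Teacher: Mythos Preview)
The paper does not contain a proof of this theorem: it is stated as a citation of results from \cite{Rod23} (specifically Theorem~3.13 and Proposition~3.16 there) as part of the survey of examples in Section~\ref{examples}, and no argument is given in the present text. So there is nothing in the paper to compare your proposal against.

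That said, your outline is broadly in the spirit of how such results are proved in the literature on skew group algebras. A few remarks. First, a small slip: you write that normality gives an algebra retraction with a \emph{two-sided} ideal complement; the paper's definition only asks that the kernel of the splitting be a \emph{right} ideal in $R$, and you use the correct notion later in the paragraph, so this is presumably just a typo---but be careful, since the distinction matters precisely for the averaging argument you flag. Second, you are right that the normality step is the delicate one and that naive averaging of the splitting need not preserve the right-ideal condition; this is indeed the place where the actual work happens in \cite{Rod23}, and your instinct that one needs the extra rigidity coming from the regular (bocs/coring) description is correct. Third, your treatment of regularity via the decomposition $\Ext^m_{B*G}(\mathrm{Ind}\,L_i,\mathrm{Ind}\,L_j)\cong\bigoplus_g\Ext^m_B(L_i,{}^{g}L_j)$ is the right idea, but to pass from isomorphisms on induced modules to isomorphisms on the genuine simples of $B*G$ (which are only summands of induced modules) you need to check that the comparison maps are compatible with the idempotent decompositions on both sides; this is not automatic and requires tracking the Clifford-theoretic parametrisation carefully. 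None of this is a fatal gap, but your write-up is a sketch of the expected strategy rather than a proof, and the steps you label ``delicate'' really are where the content lies.
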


As group algebras are typically not basic, we cannot expect the algebra $B*G$ in the above theorem to be basic. However, as mentioned before, we can obtain a bound quiver for $\Lambda * G$ by choosing a basic representative for $B*G$. We conclude our survey with an example from Lie theory. 

\begin{example}\label{category-o-example}
Let $\mathfrak{g}$ be a finite-dimensional complex semisimple Lie algebra with a fixed triangular decomposition $\mathfrak{g}=\mathfrak{n}^-\oplus \mathfrak{h}\oplus \mathfrak{n}^+$. Let $\mathcal{O}$ be BGG category, that is the category of finitely generated locally $U(\mathfrak{n}^+)$-finite weight modules. Then, $\mathcal{O}$ decomposes into blocks $\mathcal{O}_\chi$, where each block $\mathcal{O}_\chi$ is equivalent to the module category of a finite-dimensional algebra $A_\chi$. Here we focus on the principal block $\mathcal{O}_0$. The simple objects in $\mathcal{O}$ are indexed by weights of the form $w\cdot (-2\rho)$, where $w\in W$, the corresponding Weyl group and $\cdot$ denotes the dot action on weights defined by $w\cdot \lambda=w(\lambda+\rho)-\rho$, where $\rho$ denotes the half sum of the positive roots. The Bruhat order on $W$ thus gives a partial order on the simple modules belonging to the principal block and thus a partial order on the simples for the finite-dimensional algebra $A_\chi$. With respect to this partial order, the algebra $A_\chi$ is quasi-hereditary. Its standard modules correspond to the Verma modules $\Delta_w=U(\mathfrak{g})\otimes_{U(\mathfrak{b})} \mathbb{C}_{w\cdot (-2\rho)}$. In \cite{Koe95}, Koenig proved that a particular Morita representative of $A_\chi$ has an exact Borel subalgebra. It is not known whether this particular representative has a regular exact Borel subalgebra. We want to apply Conde's formula \ref{Conde's-formula} to investigate the multiplicities of the projective modules in this case. Recall (e.g. from \cite[Theorems 7.6.6, 7.7.7]{Dix96} or \cite[Theorem 4.2, Remark 5.1, Corollary 5.2]{Hum08}) that the Verma modules satisfy 
\[\dim \Hom(\Delta_x,\Delta_w)=\begin{cases}1&\text{if $x\leq w$,}\\0&\text{else.}\end{cases}\]
More generally, this property holds for the version of category $\mathcal{O}$ associated to an arbitrary Coxeter group, see \cite[Theorem 6.1]{Abe12}. 
On the other hand, the Kazhdan--Lusztig conjectures state that the multiplicities $[\Delta_x:L_w]$ (and $[\nabla_x:L_w])$ are given by the value of the corresponding Kazhdan--Lusztig polynomials at $1$, i.e. $P_{w_0x, w_0w}(1)$. Thus, for blocks of category $\mathcal{O}$, Conde's formula \ref{Conde's-formula} specialises to 
\[\ell_x=1+\sum_{v\leq w<x} \ell_\nu P_{w_0w, w_0v}(1)-\sum_{w<x}\ell_w P_{w_0x, w_0w}(1).\]
If the rank of $\mathfrak{g}$ is smaller than or equal to $2$, then Jantzen determined that all non-zero multiplicities are equal to one, see \cite[(8.1)]{Hum08}. Again, the same statement holds more generally for principal blocks of the generalisations of category $\mathcal{O}$ associated to dihedral groups, see \cite[Proof of Theorem 7.2]{Sau18}. We claim that in this case we have the following formula for the multiplicities:
\[\ell_w=\begin{cases}1&\text{if $w$ is minimal,}\\3^{\height(w)-1}&\text{else,}\end{cases}\]
where $\height(w)$ denotes the height of $w$ in the poset $W$ equipped with the Bruhat order. To see this, we start by using the specialisation of Conde's formula to the case where $\dim\Hom(\Delta_w,\Delta_x)=1$, $[\Delta_w:L_x]=1$, and $[\nabla_w:L_x]=1$:
\[\ell_w=1+\sum_{v\leq w<x}\ell_v-\sum_{w<x}\ell_w=1+\sum_{v<w<x}\ell_v.\] 
Using induction and considering the Hasse quiver of the Bruhat order for the dihedral group
\[
\begin{tikzcd}
&\bullet\arrow[no head]{ld}\arrow[no head]{rd}\\
\bullet\arrow[no head]{d}\arrow[no head]{rrd}&&\bullet\arrow[no head]{lld}\arrow[no head]{d}\\
\vdots\arrow[no head]{d}\arrow[no head]{rrd}&&\vdots\arrow[no head]{lld}\arrow[no head]{d}\\
\bullet\arrow[no head]{rd}&&\bullet\arrow[no head]{ld}\\
&\bullet
\end{tikzcd}
\]
 we see that:
\begin{align*}
\ell_w&=1+\sum_{\substack{v<w<x\\v \text{ minimal}}} \ell_v + \sum_{\substack{v<w<x\\v \text{ not minimal}}} \ell_v=1+\sum_{\substack{v<w<x\\v \text{ minimal}}} 1 + \sum_{\substack{v<w<x\\v \text{ not minimal}}} 3^{\height(v)-1}\\
&=1+2(\height(x)-1)+\sum_{k=1}^{\height(x)-2}\sum_{\substack{v\\\height(v)=k}}\sum_{\substack{w\\v<w<x}}3^{k-1}\\
&=1+2(\height(x)-1)+\sum_{k=1}^{\height(x)-2}\sum_{\substack{v\\\height(v)=k}}2(\height(x)-k-1)3^{k-1}\\
&=1+2(\height(x)-1)+\sum_{k=1}^{\height(x)-2}4(\height(x)-k-1)3^{k-1}
\end{align*}
A straightforward calculation (which can also be performed using WolframAlpha) shows that the latter term equals the claimed term. 
\end{example}

It would be of interest to compute these multiplicities of projectives for other Coxeter groups. However, even given these multiplicities, there is no easy way to determine how the regular exact Borel subalgebra sits inside this Morita representative. The only known method so far to do this is to compute the $A_\infty$-structure on the $\Ext$-algebra of the standard modules. This seems to be a challenging task in light of the fact that even the computation of the dimension of extension groups between Verma modules is an active area of research, see e.g. \cite{KM22}. Note also that in small ranks, the quiver and relations (in the classical sense) for the blocks of category $\mathcal{O}$ have been determined in \cite{Str03}, for the case of dihedral groups, see \cite[Theorem 6.8]{Sau18}.




\bibliographystyle{emss}
\bibliography{publication}

\end{document}